\crefname{hypothesis}{Hypothesis}{Hypotheses}
\crefname{fact}{Fact}{Facts}
\title{An Efficient ADMM Method for Ratio-Type Nonconvex and Nonsmooth  Minimization in Sparse Recovery\thanks{Submitted to the editors DATE.
\funding{This work was funded by the National Natural Science Foundation of China (12171339, 12471296).}}}
\author{Lang Yu\thanks{Department of Mathematics, Sichuan University, Chengdu, 610064, China 
  (\email{nanjinghuang@hotmail.com; njhuang@scu.edu.cn}, ).}
\and Nanjing Huang\footnotemark[2]}
\begin{document}

\maketitle

% REQUIRED
\begin{abstract}
Sparse signal recovery based on nonconvex and nonsmooth optimization problems has significant applications and demonstrates superior performance in signal processing and machine learning. This work deals with a scale-invariant $\ell_{1/2}/\ell_{2}$ sparse minimization with nonconvex, nonseparable, ratio-type regularization to enhance the accuracy and stability of sparse recovery. Within the framework of the null space property, we analyze the conditions for exact and stable recovery in constrained minimization problem. For the unconstrained regularized minimization problem, we develop an alternating direction method of multipliers (ADMM) based on a splitting strategy and rigorously analyze its global convergence and linear convergence rate under reasonable assumptions. Numerical experiments demonstrate that the proposed method consistently outperforms existing approaches across diverse noise levels and measurement settings. Furthermore, experiments on neural network sparsity and generalization performance demonstrate that the method effectively improves prediction accuracy.
\end{abstract}

% REQUIRED
\begin{keywords}
compressed sensing, nonconvex optimization, sparse, signal recovery, alternating direction method of multipliers
\end{keywords}

% REQUIRED
\begin{MSCcodes}
90C26, 65K10, 49M29
\end{MSCcodes}

\section{Introduction}\label{sec:intro}
Within the framework of compressed sensing (CS), the sparse signal recovery problem is typically formulated as solving an underdetermined linear system or an $\ell_{0}$ norm minimization problem under noisy observations, or equivalently, by incorporating an $\ell_{0}$ regularization term into the objective function as a constraint. Specifically, one aims to solve the following problems:
\begin{subequations}
	\begin{align}
		&\min_{\bm{x} \in \mathbb{R}^n} \|\bm{x}\|_0 \quad \text{s.t.} \; \bm{Ax} = \bm{b} \; \text{or} \; \|\bm{Ax} - \bm{b}\|_2 \leq \epsilon_0, \label{eq:L0Con}   \\
		&\min_{\bm{x} \in \mathbb{R}^n} \frac{1}{2}\|\bm{Ax} - \bm{b}\|_2^2 + \lambda_0 \|\bm{x}\|_0, \label{eq:L0UnCon}
	\end{align}
\end{subequations}
where $\bm{A} \in \mathbb{R}^{m \times n}$ $(m \ll n)$ denotes the sensing matrix, $\bm{b} \in \mathbb{R}^m$ is the observation vector, $\epsilon_0$ specifies the noise level, and $\|\bm{x}\|_0$ represents the number of nonzero entries in $\bm{x}$, referred to as the $\ell_{0}$ norm. The regularization parameter $\lambda_0 > 0$ balances the trade-off between the fidelity term and the sparsity-promoting regularization term. However, both problems \eqref{eq:L0Con} and \eqref{eq:L0UnCon} are NP-hard and thus notoriously challenging to solve.  

Classical CS theory demonstrates that, when a signal is sparse under a certain basis and the sensing matrix satisfies the restricted isometry property (RIP) \cite{candes2006robust} or similar conditions, exact signal recovery can be achieved via convex optimization, in particular through $\ell_{1}$ norm minimization \cite{donoho2006compressed}. Nevertheless, when the sensing matrix violates RIP or exhibits high coherence, $\ell_{1}$ minimization may fail to identify the sparsest solution, often yielding suboptimal recovery results \cite{cherni2020spoq}. Candès et al. observed that $\ell_{1}$ minimization can produce solutions that are insufficiently sparse, limiting its applicability in challenging scenarios \cite{candes2006robust}. Moreover, for sparse signals with high dynamic range, the $\ell_{1}$ norm may excessively penalize large coefficients while underestimating smaller ones, thereby degrading recovery performance \cite{candes2008enhancing}.  

To address the inherent limitations of convex models, numerous nonconvex approaches have been proposed to construct sparsity-promoting functions that more closely approximate the $\ell_{0}$ norm. Among them, the $\ell_{p}$ quasi-norm ($0 < p < 1$) has demonstrated significant advantages over the $\ell_{1}$ norm, achieving sparser and more accurate solutions under weaker conditions, and enabling exact recovery \cite{chen2014convergence,huang2003unified,wang2021nonconvex}. Other notable nonconvex regularizers include the smoothly clipped absolute deviation (SCAD) \cite{fan2001variable}, fractional $\ell_{1}$ \cite{ lv2009unified}, transformed $\ell_{1}$ \cite{nikolova2000local}, log-sum penalty \cite{zhou2023iterative}, and minimax concave penalty (MCP) \cite{zhang2010nearly}. Many of these models possess separable structures, allowing for parallel or closed-form updates during optimization, thus achieving both high recovery accuracy and computational efficiency.  

Although separable structures facilitate efficient computation, certain models, particularly those that do not possess closed-form thresholding operators, must rely on more elaborate numerical procedures within each iteration, such as root-finding methods or nested sub-iterations. This requirement substantially increases the computational cost per iteration \cite{chartrand2012nonconvex,daubechies2004iterative,wipf2010iterative}. To alleviate these drawbacks, researchers have developed efficient nonconvex sparse optimization models with non-separable structures. Among these, $\ell_{1} - \ell_{2}$ \cite{esser2013method,yin2015minimization} has attracted considerable attention for its strong performance under high coherence, and can be effectively solved using the difference-of-convex algorithm (DCA) without additional smoothing or regularization \cite{lou2018fast,lou2015computing}. Yin et al. \cite{yin2014ratio} analyzed exact recovery conditions for $\ell_{1} - \ell_{2}$. Another important development is the $\ell_{1} / \ell_{2}$ minimization, which enjoys scale invariance and adapts well to high-dynamic-range signals \cite{wang2021limited,zeng2021analysis}. Originally introduced by Hoyer \cite{hoyer2002non} in the context of nonnegative matrix factorization, $\ell_{1} / \ell_{2}$ has since been recognized as an effective sparsity-promoting functional \cite{xu2021analysis}. Its scale invariance makes it a compelling candidate for approximating the scale-invariant $\ell_{0}$ norm \cite{rahimi2019scale}. Theoretically, Yin et al. \cite{yin2014ratio} proved that, for nonnegative signals, minimizing $\ell_{1} / \ell_{2}$ is equivalent to minimizing $\ell_{0}$, while Rahimi et al. \cite{rahimi2019scale} showed that an $s$-sparse solution is a local minimizer of $\ell_{1} / \ell_{2}$ under the strong null space property (sNSP). Tao \cite{tao2022minimization,tao2023study} derived the analytical form of its proximal operator, providing new algorithmic framework. Further extensions include $\ell_{1} / \ell_{\infty}$ \cite{wang2023variant}, $\ell_{1} - \alpha \ell_{2}$ \cite{ge2021dantzig}, $\alpha \ell_{1} - \beta \ell_{2}$ \cite{ding2019alphaL1}, and $q$-ratio models \cite{zhou2021minimization}, with successful applications in the blind deconvolution \cite{repetti2014euclid}, limited-angle computed tomography (CT) reconstruction \cite{wang2021limited}, sparse portfolio optimization \cite{wu2024sparse},  and image gradient sparsification \cite{wang2022minimizing}.  

Nowadays, from the classical $\ell_{1}$ minimization to more advanced nonconvex approaches such as $\ell_{1}\!-\!\ell_{2}$ and $\ell_{1}/\ell_{2}$, the field of sparse  recovery have witnessed substantial progress, as mentioned above. One important research direction is to address the design of regularization terms, which can provide the stronger representational capacity. A parallel line of work aims at developing optimization algorithms endowed with rigorous convergence guarantees and improved scalability. Motivated by the recent advances mentioned above, we introduce a ratio-based sparse optimization problem designed to enhance sparse recovery performance in challenging scenarios. In particular, we consider the minimization of the ratio between the $\ell_{1/2}$ quasi-norm and the $\ell_{2}$ norm:
\begin{subequations}
	\begin{align}
		\min_{\bm{x} \in \mathbb{R}^n} \mathcal{H}(\bm{x}) := \zeta \frac{\|\bm{x}\|_{1/2}^{1/2}}{\|\bm{x}\|_2^{1/2}} \quad \text{s.t.} \quad \bm{Ax} = \bm{b}, \label{eq:OptimizatopnObjectCon}   \\
		\min_{\bm{x} \in \mathbb{R}^n} \mathcal{H}(\bm{x}) := \zeta \frac{\|\bm{x}\|_{1/2}^{1/2}}{\|\bm{x}\|_2^{1/2}} + \frac{1}{2} \|\bm{Ax} - \bm{b}\|_2^2,\label{eq:OptimizatopnObjectUnCon}
	\end{align}
\end{subequations}
with the convention $\|\bm{0}\|_{1/2} / \|\bm{0}\|_2 = 1$ when $\bm{x} = \bm{0}$. Our focus lies in the theoretical recovery properties of the constrained minimization problem \cref{eq:OptimizatopnObjectCon} and the algorithmic design for solving the unconstrained minimization problem \cref{eq:OptimizatopnObjectUnCon}. Notably, $\ell_{1/2} / \ell_{2}$ retains scale-invariance. Compared with $\ell_{1} - \ell_{2}$, it better approximates the $\ell_{0}$ norm because $\ell_{1/2}$ is closer to $\ell_{0}$ than $\ell_{1}$ or $\ell_{p}$ for most $p > 1/2$, thereby suppressing small coefficients more aggressively and yielding sparser solutions. When $p \to 0$, $\ell_{p}$ tends to $\ell_{0}$, offering the strongest sparsity but with high nonconvexity, making optimization more challenging and susceptible to suboptimal local minima. For $0 < p < 1/2$, the sparsity is stronger than $\ell_{1/2}$ but at the expense of increased nonconvexity, reduced stability, and greater sensitivity to initialization and algorithm robustness. Conversely, for $1/2 < p < 1$, nonconvexity is weaker and stability improves, but sparsity is reduced and suppression of small coefficients becomes less pronounced; $p=1$ recovers the $\ell_{1}$ norm. In scenarios involving highly coherent matrices or high-dynamic-range signals, $\ell_{1/2}$ often preserves high reconstruction accuracy, avoids over-penalizing large coefficients, and admits analytic half-thresholding operators in many cases, enabling efficient implementation and rapid convergence \cite{xu2012l_,zong2012re,zeng2014l}. \cref{fig:Contourfig} illustrates the contour plots of various sparsity measures. It is evident that, near the origin, the contours of the $\ell_{1/2} / \ell_{2}$ sparsity-inducing function lie closer to the coordinate axes, thereby encouraging exact zeros and promoting stronger sparsity.
\vspace{-15pt} % 
\begin{figure}[!htbp]
	\centering
	\setlength{\abovecaptionskip}{-1pt}   % 
	\setlength{\belowcaptionskip}{0pt}   % 
	\includegraphics[scale=0.25]{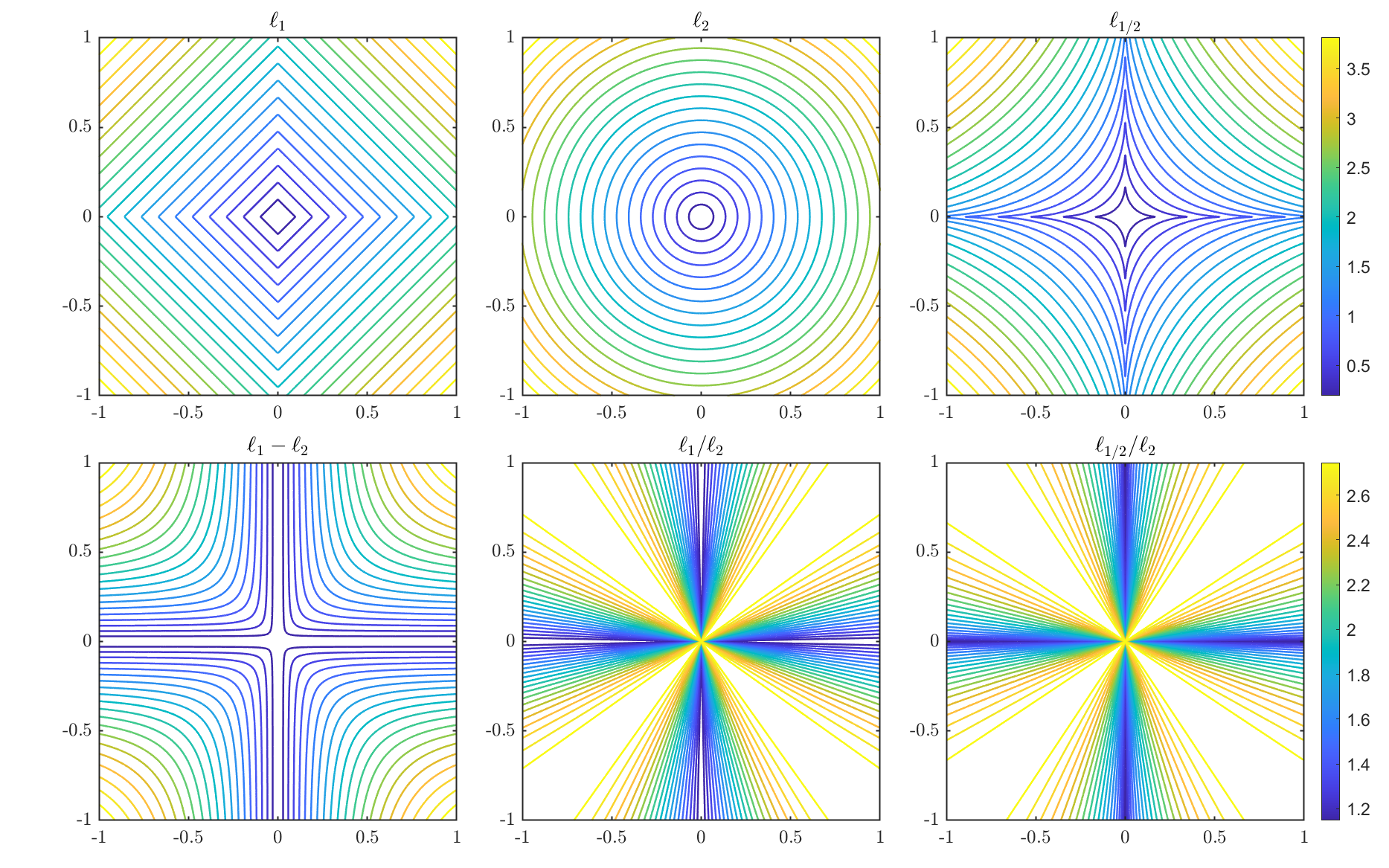} 
	\caption{Contour plots of different sparsity-inducing functions.}
	\label{fig:Contourfig}
\end{figure}
\vspace{5pt} % 

The main contributions of the current work can be summarized as follows:
\begin{enumerate}[label=(\roman*)]
	\item We propose a novel, strongly sparsity-promoting, and scale-invariant $\ell_{1/2} / \ell_{2}$ sparse optimization problem.
	\item We establish exact and stable recovery conditions for the constrained $\ell_{1/2} / \ell_{2}$ minimization problem.
	\item We design an efficient splitting framework and develop an ADMM-based algorithm to solve the proposed problem.
	\item We obtain the global convergence and  linear convergence rate for the proposed ADMM algorithm.
\end{enumerate}

The rest of this work is structured as follows: Section \ref{sec:notation} introduces relevant preliminaries and notation. Section \ref{sec:analysis} presents the theoretical analysis of the $\ell_{1/2} / \ell_{2}$ model. Section \ref{sec:alg} describes the ADMM algorithm design. Section \ref{sec:convergence} provides the convergence analysis. Section \ref{sec:experiments} reports numerical experiment results. Finally, Section \ref{sec:conclusion} concludes the work.

\section{Notation and preliminaries}\label{sec:notation}

We begin by introducing the notation and terminology that will be used consistently throughout the work. Let $\mathbb{R}^n$ denote the $n$-dimensional Euclidean space. Lowercase boldface letters (e.g., $\bm{x}$) represent vectors, where $x_i$ denotes the $i$-th entry of $\bm{x}$. Uppercase boldface letters (e.g., $\bm{A}$) denote matrices. For a vector $\bm{x} \in \mathbb{R}^n$, the $\ell_p$ norm for $p \ge 1$ is defined as $\|\bm{x}\|_p = \left( \sum_{i=1}^{n} |x_i|^p \right)^{1/p}$. For $0 < p < 1$, the quantity $\|\bm{x}\|_p$ defines a nonconvex but useful sparsity-promoting functional, often referred to as the $\ell_p$ quasi-norm. The $\ell_0$ norm of a vector $\bm{x}$, denoted $\|\bm{x}\|_0$, is defined as the number of nonzero entries in $\bm{x}$. Let $[n] := \{i \in \mathbb{Z}_{>0} \mid 1 \le i \le n\}$ denote the standard index set of cardinality $n$. Define $[n]_s := \{T \subset [n] \mid |T| \le s\}$ as the collection of all subsets of $[n]$ whose cardinality does not exceed $s$, where $T$ denotes an index set and $|T|$ its cardinality. For any $T \in [n]_s$, we denote its complement by $T^c := [n] \setminus T$. We denote the support set of $\bm{x}$ by $\mathrm{supp}(\bm{x})= \{ i \in [n] : x_i \ne 0 \}$. For a given point $\bm{x} \in \mathbb{R}^n$ and radius $r > 0$, the closed Euclidean ball centered at $\bm{x}$ is denoted by $\mathbb{B}_r(\bm{x}) = \{ \bm{y} \in \mathbb{R}^n : \|\bm{y} - \bm{x}\|_2 \le r \}$.

Given a closed set $D \subseteq \mathbb{R}^n$, the indicator function $\delta_D : \mathbb{R}^n \to \{0, +\infty\}$ is defined by $\delta_D(\bm{x}) = 0$ if $\bm{x} \in D$, and $\delta_D(\bm{x}) = +\infty$ otherwise. The distance from a point $\bm{x} \in \mathbb{R}^n$ to a closed set $D \subseteq \mathbb{R}^n$ is defined as $\mathrm{dist}(\bm{x}, D) := \inf_{\bm{y} \in D} \|\bm{x} - \bm{y}\|$. We denote by $\bm{A}_T \in \mathbb{R}^{m \times |T|}$ the submatrix of matrix $\bm{A}\in \mathbb{R}^{m \times n}$ consisting of the columns whose indices belong to $T$. The matrix $\bm{I}$ is the identity matrix with dimensions conforming to the context. Let $\bm{\text{ker}(A)}$ and $R\bm{(A)}$ denote the null space and the range of matrix $\bm{A}$, respectively.

Let $f : \mathbb{R}^n \to (-\infty, +\infty]$ be an extended real-valued function. The function $f$ is called proper if its effective domain $ \operatorname{dom} f := \{ \bm{x} \in \mathbb{R}^n \mid f(\bm{x}) < +\infty \} $ is non-empty and $f(\bm{x}) > -\infty$ for all $\bm{x} \in \mathbb{R}^n$. It is lower semicontinuous (l.s.c.) at $\bm{x}$ if, for every sequence $\bm{x}^k \to \bm{x}$, one has $ \liminf_{k \to \infty} f(\bm{x}^k) \geq f(\bm{x}).$ If $f$ is l.s.c. at every point, we say that $f$ is l.s.c. A function is called closed if it is both proper and l.s.c. The sign function $\mathrm{sgn}:\mathbb{R}\to\{-1,0,1\}$ is defined as $\mathrm{sgn}(x)=1$ if $x>0$, $\mathrm{sgn}(x)=0$ if $x=0$, and $\mathrm{sgn}(x)=-1$ if $x<0$. It is often used in subdifferential expressions involving nonsmooth terms. Let $f: \mathbb{R}^n \to (-\infty, +\infty]$ be a proper closed function. The regular (Fréchet) subdifferential of $f$ at $\bm{x}$ is defined as
\begin{equation*}
	\widehat{\partial} f(\bm{x}) := \left\{ \bm{v} \in \mathbb{R}^n \,\middle|\, \liminf_{\bm{y} \to \bm{x},\, \bm{y} \ne \bm{x}} \frac{f(\bm{y}) - f(\bm{x}) - \langle \bm{v}, \bm{y} - \bm{x} \rangle}{\|\bm{y} - \bm{x}\|} \ge 0 \right\}.
\end{equation*}
The limiting subdifferential (also known as Mordukhovich subdifferential) of $f$ at $\bm{x} \in \operatorname{dom} f$ is defined by 
\begin{equation*}
	\partial f(\bm{x}) := \left\{ \bm{v} \in \mathbb{R}^n \,\middle|\, \exists\, \bm{x}^k \to \bm{x},\; f(\bm{x}^k) \to f(\bm{x}),\; \bm{v}^k \in \widehat{\partial} f(\bm{x}^k),\; \bm{v}^k \to \bm{v} \right\}.
\end{equation*}
A point $\bm{x}^\ast \in \operatorname{dom} f$ is called a critical point or stationary point if $\bm{0} \in \partial f(\bm{x}^\ast)$.

\section{The analysis of $\ell_{1/2}$ / $\ell_2$ model} \label{sec:analysis}
In this section, we first provide an illustrative example to highlight the superiority of the $\ell_{1/2}/\ell_2$ sparsity-inducing function. Then we proceed to analyze its theoretical properties.
\subsection{A Toy Example for Demonstration}
Consider a sensing matrix $ \bm{A} \in \mathbb{R}^{7 \times 8} $ and an observation vector $ \bm{b} \in \mathbb{R}^7 $, which are defined as
\begin{equation}
	\bm{A} = \begin{bmatrix}
		1 & -1 & 0 & 0 & 0 & 0 & 0 & 0 \\
		1 &  0 & -1 & 0 & 0 & 0 & 0 & 0 \\
		0 &  1 & 1 & 1 & 0 & 0 & 0 & 0 \\
		-2 & -2 & 0 & 0 & 1 & 0 & 0 & 0 \\
		-1 & -1 & 0 & 0 & 0 & 1 & 0 & 0 \\
		-1 &  0 & -1 & 0 & 0 & 0 & 1 & 0 \\
		-2 & -2 & -2 & 0 & 0 & 0 & 0 & 1
	\end{bmatrix}
\end{equation}
and $\bm{b} = \left[0, 0, 20, 40, 16, 25, 39\right]^{T}$. It is easy to verify that all solutions to the linear system $\bm{A}\bm{x} = \bm{b}$ can be expressed in the parametric form $\bm{x}(\sigma) = (\sigma,\ \sigma,\ \sigma,\ 20 - 2\sigma,\ 40 + 4\sigma,\ 16 + 2\sigma,\ 25 + 2\sigma,\ 39 + 2\sigma)^T, \sigma \in \mathbb{R}.$ The sparsest solution is attained at $\sigma = 0$, where the vector $\bm{x}$ contains only 5 nonzero entries. In contrast, several other values of $\sigma$, such as $\sigma = \pm 10$, $-8$, $-12.5$, and $19.5$, yield solutions with higher sparsity levels, namely 7 nonzero entries. These correspond to local minima under various nonconvex sparsity-inducing functions but they are suboptimal with respect to sparsity.

\vspace{-5pt} % 
\begin{figure}[!htbp]
	\centering
	\setlength{\abovecaptionskip}{0pt}  
	\includegraphics[scale=0.30]{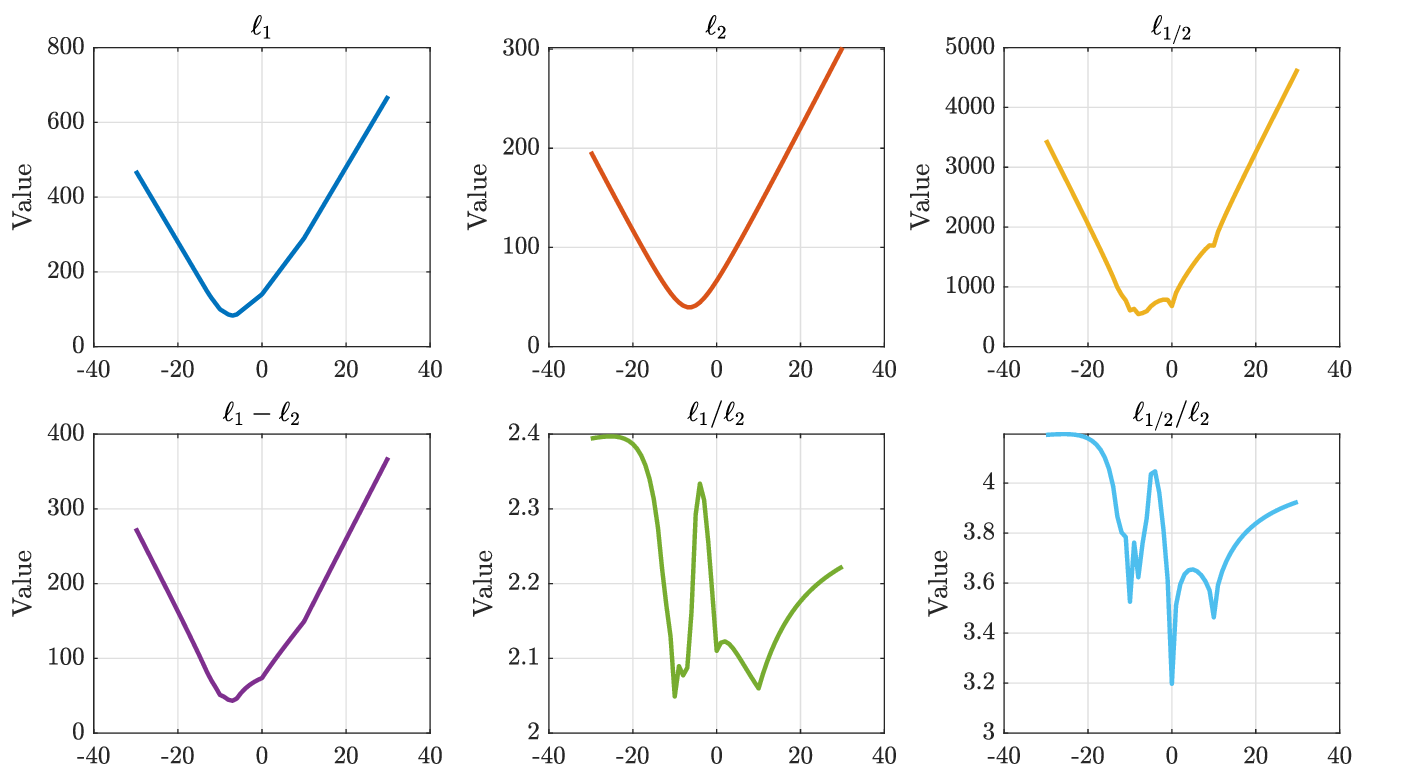} 
	\caption{The objective functions of the toy example illustrate that only $\ell_{1/2}/\ell_2$ can find $\sigma = 0$ as the global minimizer.}
	\label{fig:toyexampfig}
\end{figure}
\vspace{-3pt} % 
In \cref{fig:toyexampfig}, we illustrate the behavior of various sparsity-inducing functions with respect to the parameter $\sigma$. Among all models, only $\ell_{1/2}/\ell_2$ successfully identifies the globally sparsest solution at $\sigma = 0$. These results highlight the superior sparsity-promoting capability of $\ell_{1/2}/\ell_2$ under this toy setting.

\subsection{Exact and stable recovery}
We first give the following lemma, which will be frequently invoked later.
\begin{lemma}[Ratio Bound] \label{lem:RatioBounds}
	For any nonzero vector $\bm{x} \in \mathbb{R}^n \setminus \{\bm{0}\}$, it holds that
	\begin{equation}
		1 \leq \frac{\|\bm{x}\|_{1/2}^{1/2}}{\|\bm{x}\|_2^{1/2}} =\frac{\sum_{i=1}^{n}|x_i|^{1/2}}{\left(\sum_{i=1}^{n}|x_i|^{2}\right)^{1/4}} \leq n^{3/4}.
	\end{equation}
\end{lemma}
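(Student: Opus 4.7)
The plan is to make the substitution $y_i = |x_i|^{1/2}$ for $i \in [n]$, which turns the ratio into a comparison between $\ell_{1}$ and $\ell_{4}$ norms of a nonnegative vector $\bm{y} \in \mathbb{R}^n_{\geq 0} \setminus \{\bm{0}\}$:
\begin{equation*}
\frac{\sum_{i=1}^{n}|x_i|^{1/2}}{\left(\sum_{i=1}^{n}|x_i|^{2}\right)^{1/4}} \;=\; \frac{\sum_{i=1}^{n} y_i}{\left(\sum_{i=1}^{n} y_i^{4}\right)^{1/4}} \;=\; \frac{\|\bm{y}\|_1}{\|\bm{y}\|_4}.
\end{equation*}
Once the problem is reformulated this way, both inequalities reduce to standard textbook facts about $\ell_p$ norms.

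For the lower bound $\|\bm{y}\|_1 \geq \|\bm{y}\|_4$, I would appeal to the monotonicity of $\ell_p$ norms in $p$ on $\mathbb{R}^n_{\geq 0}$. A self-contained derivation is equally short: since $y_i \geq 0$, expanding $(\sum_i y_i)^4$ via the multinomial theorem yields $\sum_i y_i^4$ plus a sum of nonnegative cross terms, hence $(\sum_i y_i)^4 \geq \sum_i y_i^4$, and taking fourth roots gives the claim. Equality holds only when at most one $y_i$ is nonzero, i.e.\ when $\bm{x}$ is $1$-sparse.

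For the upper bound, I would apply H\"older's inequality with conjugate exponents $p=4$ and $q=4/3$ to the pair $(y_i, 1)$:
\begin{equation*}
\sum_{i=1}^n y_i \cdot 1 \;\leq\; \left(\sum_{i=1}^n y_i^{4}\right)^{1/4}\!\left(\sum_{i=1}^n 1^{4/3}\right)^{3/4} \;=\; n^{3/4}\,\|\bm{y}\|_4,
\end{equation*}
which upon dividing by $\|\bm{y}\|_4$ yields $\|\bm{y}\|_1/\|\bm{y}\|_4 \leq n^{3/4}$. Equality occurs precisely when all $|x_i|$ are equal.

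I do not anticipate any real obstacle: the only minor judgment call is choosing the H\"older exponents, which is forced by the target constant, since one needs $1/q = 3/4$ to produce $n^{3/4}$, thus $q = 4/3$ and $p = 4$. The substitution $y_i = |x_i|^{1/2}$ is the unifying device that makes both bounds essentially one-line consequences of well-known inequalities.
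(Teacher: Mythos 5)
Your proof is correct, and it takes a slightly different route from the paper's. The paper factors the bound through the $\ell_1$ norm: it applies the Cauchy--Schwarz inequality once to get $\|\bm{x}\|_{1/2}\le n\|\bm{x}\|_1$ (together with the trivial $\|\bm{x}\|_1\le\|\bm{x}\|_{1/2}$), and then a second time to get $\|\bm{x}\|_2\le\|\bm{x}\|_1\le\sqrt{n}\,\|\bm{x}\|_2$, assembling $n^{3/4}$ as $\sqrt{n}\cdot n^{1/4}$. Your substitution $y_i=|x_i|^{1/2}$ collapses the whole statement into the single comparison $\|\bm{y}\|_4\le\|\bm{y}\|_1\le n^{3/4}\|\bm{y}\|_4$, which one H\"older application with exponents $(4,4/3)$ and the standard monotonicity of $\ell_p$ norms settle at once. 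The two approaches are logically equivalent (two chained Cauchy--Schwarz steps compose to one H\"older step), but yours is more economical, makes the source of the exponent $3/4$ transparent, and yields the equality cases ($1$-sparse $\bm{x}$ for the lower bound, constant $|x_i|$ for the upper bound) as a free by-product, which the paper does not record.
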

\begin{proof}
	Invoking the Cauchy--Schwarz inequality, we have
	\begin{equation} \label{eq:CauchySchwarzL12}
		\left( \sum_{i=1}^n 1 \cdot |x_i|^{1/2} \right)^2 \le \left( \sum_{i=1}^n 1^2 \right) \left( \sum_{i=1}^n |x_i| \right) = n \|\bm{x}\|_1.
	\end{equation}
	By the definition of the $\ell_{1/2}$ quasi-norm and combining with inequality \cref{eq:CauchySchwarzL12}, it follows that $\|\bm{x}\|_{1/2} \le n \|\bm{x}\|_1.$ Since $\|\bm{x}\|_1 \le \|\bm{x}\|_{1/2}$ holds trivially from the definition of the $\ell_{1/2}$ quasi-norm, taking square roots on both sides of inequalities yields
	\begin{equation} \label{eq:CauchySchwarzL1}
		\|\bm{x}\|_1^{1/2} \le \|\bm{x}\|_{1/2}^{1/2} \le \sqrt{n} \|\bm{x}\|_1^{1/2}.
	\end{equation}
	Furthermore, applying the Cauchy--Schwarz inequality to $\|\bm{x}\|_1$ and $\|\bm{x}\|_2$, we obtain the classical bound $\|\bm{x}\|_2 \le \|\bm{x}\|_1 \le \sqrt{n} \|\bm{x}\|_2.$
	Substituting this into inequality \cref{eq:CauchySchwarzL1}, one has
	\begin{equation}
		\frac{\|\bm{x}\|_{1/2}^{1/2}}{\|\bm{x}\|_2^{1/2}} \le \frac{\sqrt{n} \cdot \|\bm{x}\|_1^{1/2}}{\|\bm{x}\|_2^{1/2}} \le \frac{\sqrt{n} \cdot n^{1/4} \|\bm{x}\|_2^{1/2}}{\|\bm{x}\|_2^{1/2}} = n^{3/4}.
	\end{equation}
	Similarly, using the inequality $\|\bm{x}\|_2 \le \|\bm{x}\|_1$, we can deduce the lower bound $\frac{\|\bm{x}\|_{1/2}^{1/2}}{\|\bm{x}\|_2^{1/2}} \ge \frac{\|\bm{x}\|_1^{1/2}}{\|\bm{x}\|_2^{1/2}} \ge \frac{\|\bm{x}\|_2^{1/2}}{\|\bm{x}\|_2^{1/2}} = 1.$ Combining both bounds, it holds that $ 1 \le \frac{\|\bm{x}\|_{1/2}^{1/2}}{\|\bm{x}\|_2^{1/2}} \le n^{3/4}$.
\end{proof}

\begin{definition}[Extended Null Space Property (eNSP)]\label{def:eNSP}
	Given a matrix $ \bm{A} \in \mathbb{R}^{m \times n}$ and parameters $p \in (0,1]$ and $c \in (0,1)$, we say that $\bm{A}$ satisfies the eNSP of order $ s $ if
	\begin{equation}
		(1-c)^{1/p}\|\bm{v}_T\|_p \leq c^{1/p}\|\bm{v}_{T^{c}}\|_p, \quad \forall \bm{v} \in \ker(\bm{A}) \setminus \{\bm{0}\}, \forall T \subset [n]_s.
	\end{equation}
\end{definition}

\begin{remark}
	The eNSP can be regarded as a natural generalization of the plain null space property (NSP) \cite{donoho2001uncertainty}. In particular, when $ c = 1/2 $ and $ p = 1 $, the eNSP reduces to the plain NSP. For $ c \in (0, 1/2) $ and $ p = 1 $, the eNSP is strictly stronger than the plain NSP in the sense that any matrix satisfying the eNSP also satisfies the NSP, but the converse does not necessarily hold. Moreover, when $ p = 1 $ and $ c = 1/((1 + s)^p + 1) $, the eNSP coincides with the strong null space property (sNSP) \cite{rahimi2019scale} of order $ s $.
\end{remark}

Next, we study the sparse recovery properties of the constrained $\ell_{1/2}/\ell_2$ minimization problem \cref{eq:OptimizatopnObjectCon}. Leveraging the eNSP, we derive a sufficient condition ensuring exact recovery of any $s$-sparse vector. This result offers a theoretical basis for analyzing nonconvex, scale-invariant regularization schemes.
\begin{theorem}\label{thm:local_minimality}
	For $0 < p \le 1$, if an $ m \times n $ matrix $ \bm{A} $ satisfies the eNSP with parameter pair $\left(s, \frac{1}{2}-\frac{1}{4}s^{-p/2+p^2/2}\right) $ and 
	$\inf_{\bm{v}\in\text{ker}(\bm{A})\setminus\{\bm{0}\}}\frac{\|\bm{v}\|_{p}}{\|\bm{v}\|_{2}} \ge 2^{1/p}s^{1/p-p/2}.$ Then for every $ \bm{v} \in \ker(\bm{A}) $ and any $ s $-sparse solution of $ \bm{A}\bm{x} = \bm{b} $ ($ \bm{b} \neq \bm{0} $), one has
	\begin{equation}
		\frac{\|\bm{x}\|_{p}^{p}}{\|\bm{x}\|_{2}^{p}} \leq \frac{\|\bm{x} + \bm{v}\|_{p}^{p}}{\|\bm{x} + \bm{v}\|_{2}^{p}}.
	\end{equation}
\end{theorem}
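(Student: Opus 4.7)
The plan is to establish the ratio inequality by separately producing a sharp lower bound on the numerator $\|\bm{x}+\bm{v}\|_p^p$ via the eNSP and a clean upper bound on the denominator $\|\bm{x}+\bm{v}\|_2^p$ via the Euclidean triangle inequality, and then converting the reduced inequality into a scale-invariant statement that is exactly matched by the hypothesis on the $\ell_p/\ell_2$ ratio over $\ker(\bm{A})$. Let $T=\mathrm{supp}(\bm{x})\in[n]_s$, so that the decomposition $\|\bm{x}+\bm{v}\|_p^p=\|\bm{x}_T+\bm{v}_T\|_p^p+\|\bm{v}_{T^c}\|_p^p$ holds (disjoint support on $T$ and $T^c$). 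The componentwise inequality $|a+b|^p\ge|a|^p-|b|^p$ valid for $0<p\le 1$, summed over $T$, yields $\|\bm{x}_T+\bm{v}_T\|_p^p\ge\|\bm{x}\|_p^p-\|\bm{v}_T\|_p^p$, and thus
\begin{equation*}
\|\bm{x}+\bm{v}\|_p^p \;\ge\; \|\bm{x}\|_p^p + \|\bm{v}_{T^c}\|_p^p - \|\bm{v}_T\|_p^p.
\end{equation*}

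Next I would convert the right-hand side into a quantity involving $\|\bm{v}\|_p^p$. Raising the eNSP inequality to the $p$-th power gives $(1-c)\|\bm{v}_T\|_p^p \le c\|\bm{v}_{T^c}\|_p^p$, whence $\|\bm{v}_{T^c}\|_p^p\ge(1-c)\|\bm{v}\|_p^p$ and $\|\bm{v}_{T^c}\|_p^p-\|\bm{v}_T\|_p^p\ge\frac{1-2c}{1-c}\|\bm{v}_{T^c}\|_p^p\ge(1-2c)\|\bm{v}\|_p^p$. Plugging in $c=\tfrac{1}{2}-\tfrac{1}{4}s^{-p/2+p^2/2}$ gives the key identity $1-2c=\tfrac{1}{2}s^{-p/2+p^2/2}$, so
\begin{equation*}
\|\bm{x}+\bm{v}\|_p^p \;\ge\; \|\bm{x}\|_p^p + \tfrac{1}{2}s^{-p/2+p^2/2}\,\|\bm{v}\|_p^p.
\end{equation*}
For the denominator I would combine the ordinary triangle inequality $\|\bm{x}+\bm{v}\|_2\le\|\bm{x}\|_2+\|\bm{v}\|_2$ with the subadditivity $(a+b)^p\le a^p+b^p$ for $0<p\le 1$ to obtain $\|\bm{x}+\bm{v}\|_2^p\le\|\bm{x}\|_2^p+\|\bm{v}\|_2^p$.

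After cross-multiplying $\|\bm{x}\|_p^p\|\bm{x}+\bm{v}\|_2^p\le\|\bm{x}+\bm{v}\|_p^p\|\bm{x}\|_2^p$ and cancelling the common term $\|\bm{x}\|_p^p\|\bm{x}\|_2^p$, the desired inequality reduces to
\begin{equation*}
\|\bm{x}\|_p^p\,\|\bm{v}\|_2^p \;\le\; \tfrac{1}{2}s^{-p/2+p^2/2}\,\|\bm{v}\|_p^p\,\|\bm{x}\|_2^p.
\end{equation*}
Because $\bm{x}$ is supported on a set of size at most $s$, Hölder's inequality with conjugate exponents $2/p$ and $2/(2-p)$ gives $\|\bm{x}\|_p^p\le s^{1-p/2}\|\bm{x}\|_2^p$. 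Substituting this bound makes the target inequality equivalent to $\|\bm{v}\|_p^p/\|\bm{v}\|_2^p\ge 2s^{1-p^2/2}$, which, on taking $p$-th roots, is precisely the standing lower bound $\|\bm{v}\|_p/\|\bm{v}\|_2\ge 2^{1/p}s^{1/p-p/2}$ for every nonzero $\bm{v}\in\ker(\bm{A})$; the case $\bm{v}=\bm{0}$ is trivial.

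The main obstacle is not any single estimate but the bookkeeping: three separate ingredients (the $p$-quasi-triangle and eNSP combine to handle the numerator, subadditivity handles the denominator, and Hölder handles the $s$-sparsity of $\bm{x}$) must mesh so that the constant $\tfrac{1}{2}s^{-p/2+p^2/2}$ built into the eNSP parameter, the factor $s^{1-p/2}$ from Hölder, and the assumed ratio threshold $2^{1/p}s^{1/p-p/2}$ align with zero slack. A weaker reverse triangle for the $\ell_p$ part (e.g.\ on all of $[n]$ rather than on $T$), or $(a+b)^p\le 2^{1-p}(a^p+b^p)$ instead of subadditivity, would immediately break the exponent match and force a strictly stronger hypothesis.
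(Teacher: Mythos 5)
Your proposal is correct and follows essentially the same route as the paper's proof: the same support decomposition on $T$ versus $T^c$, the reverse $p$-quasi-triangle inequality for the numerator, the eNSP bound $\|\bm{v}_T\|_p^p \le c\|\bm{v}\|_p^p$ with $1-2c=\tfrac12 s^{-p/2+p^2/2}$, subadditivity of $t\mapsto t^p$ for the $\ell_2$ denominator, the H\"older bound $\|\bm{x}\|_p^p\le s^{1-p/2}\|\bm{x}\|_2^p$, and the final reduction to the kernel ratio hypothesis. The only cosmetic difference is that you cross-multiply and cancel the common term directly, whereas the paper packages the same comparison through the mediant inequality $\frac{a+b}{c+d}\ge\min\{a/c,\,b/d\}$; the two are equivalent.
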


\begin{proof}
	It is evident that the conclusion holds when $\bm{v} = 0$. In the following, we examine the situation where $\bm{v} \neq \bm{0}$. Fix an $\bm{x}$ such that $|\text{supp}\left(\bm{x}\right)| \leq s$, and let $T \subset [n]_s$ denote the support of $\bm{x}$. It follows from $\|\bm{x} + \bm{v}\|_2 \leq \|\bm{x}\|_2 + \|\bm{v}\|_2$ that
	\begin{equation}
		\frac{\|\bm{x} + \bm{v}\|_p^p}{\|\bm{x} + \bm{v}\|_2^p}
		\geq \frac{\|(\bm{x} + \bm{v})_T\|_p^p + \|\bm{v}_{T^c}\|_p^p}{(\|\bm{x}\|_2 + \|\bm{v}\|_2)^p}.
	\end{equation}
	For $0 < p \le 1$ and any $\bm{x}, \bm{v} \in \mathbb{R}^n $, the $p$-triangle inequality $ \|\bm{x} + \bm{v}\|_p^p \leq \|\bm{x}\|_p^p + \|\bm{v}\|_p^p$ implies 
	\begin{equation}\label{eq:qtriangle}
		\left | x_i\right |^p = \left | x_i+v_i-v_i\right |^p \le \left |x_i+v_i\right |^p + \left |v_i\right |^p.     
	\end{equation}
	Accordingly, $\left | x_i\right |^p - \left |v_i\right |^p \le \left | x_i+v_i\right |^p $ holds. For all $i \in T$, by summing both sides of the inequality \cref{eq:qtriangle}, we obtain $\|\bm{x}\|_p^p - \|\bm{v}_{T}\|_p^p \leq \|\left(\bm{x} + \bm{v_{T}}\right)\|_p^p$. For $0 < p \le 1$, the inequality $(\|\bm{x}\|_2 + \|\bm{v}\|_2)^p \le \|\bm{x}\|_2^p + \|\bm{v}\|_2^p$ holds, since the function $\|\cdot\|^p$ is concave. Hence, for any $ \bm{v} \in \ker(\bm{A}) \setminus \{\bm{0}\} $, one has
	\begin{equation}
		\begin{aligned}
			&\frac{\|(\bm{x} + \bm{v})_T\|_p^p + \|\bm{v}_{T^c}\|_p^p}{(\|\bm{x}\|_2 + \|\bm{v}\|_2)^p}
			\geq \frac{\|\bm{x}\|_p^p + \|\bm{v}_{T^c}\|_p^p - \|\bm{v}_T\|_p^p}{\|\bm{x}\|_2^p + \|\bm{v}\|_2^p} \\[5pt]
			&\geq \min\left\{ \frac{\|\bm{x}\|_p^p}{\|\bm{x}\|_2^p}, \frac{\|\bm{v}_{T^c}\|_p^p - \|\bm{v}_T\|_p^p}{\|\bm{v}\|_2^p} \right\} \geq \min\left\{ \frac{\|\bm{x}\|_p^p}{\|\bm{x}\|_2^p}, \frac{\|\bm{v}\|_p^p - 2\|\bm{v}_T\|_p^p}{\|\bm{v}\|_2^p} \right\}.
		\end{aligned}
		\label{eq:concavity-inequality}
	\end{equation}
	Thus, we have 
	\begin{equation}
		\frac{\|\bm{x} + \bm{v}\|_p^p}{\|\bm{x} + \bm{v}\|_2^p}
		\geq \min\left\{\frac{\|\bm{x}\|_p^p}{\|\bm{x}\|_2^p}, \frac{\|\bm{v}\|_p^p - 2\|\bm{v}_T\|_p^p}{\|\bm{v}\|_2^p}\right\}
		\label{eq:concavity}
	\end{equation}
	and equality \cref{eq:concavity} holds if and only if $ 		\frac{\|\bm{x}\|_p^p}{\|\bm{x}\|_2^p} = \frac{\|\bm{v}\|_p^p - 2\|\bm{v}_T\|_p^p}{\|\bm{v}\|_2^p}.$
	For $0 < p < r < \infty$ and any vector $\bm{x} \in \mathbb{R}^n$, the inequality $\|\bm{x}\|_r \le \|\bm{x}\|_p \le n^{1/p - 1/r} \|\bm{x}\|_r$ holds. As a consequence, when $p<r=2$, one has $s^{1 - p/2} \geq \frac{\|\bm{x}\|_p^p}{\|\bm{x}\|_2^p}.$ According to the eNSP, it follows that $\|\bm{v}_T\|_p^p \leq c\|\bm{v}\|_p^p$. By setting $c = 1/2-s^{-p/2+p^2/2}/4$ in the eNSP condition, we have 
	\begin{equation}
		\frac{\|\bm{v}\|_p^p - 2\|\bm{v}_T\|_p^p}{\|\bm{v}\|_2^p} \ge \frac{\|\bm{v}\|_p^p - 2 \cdot \left(\frac{1}{2}-s^{-p/2+p^2/2}/4\right)\|\bm{v}\|_p^p}{\|\bm{v}\|_2^p} =  s^{-p/2+p^2/2}\frac{\|\bm{v}\|_p^p}{2\|\bm{v}\|_2^p}.
	\end{equation}
	Since $\inf_{\bm{v}\in ker(\bm{A})\setminus\{\bm{0}\}}\frac{\|\bm{v}\|_{p}}{\|\bm{v}\|_{2}} \ge 2^{1/p}s^{1/p-p/2}$, it follows that 
	\begin{equation}
		\frac{s^{-p/2+p^2/2}}{2}\frac{\|\bm{v}\|_p^p}{\|\bm{v}\|_2^p} \ge \frac{s^{-p/2+p^2/2}}{2} \cdot 2s^{1-p^2/2} = s^{1-p/2}.
	\end{equation}
	As desired, the inequality $\frac{\|\bm{x}\|_{p}^{p}}{\|\bm{x}\|_{2}^{p}} \leq \frac{\|\bm{x} + \bm{v}\|_{p}^{p}}{\|\bm{x} + \bm{v}\|_{2}^{p}}$ holds.
\end{proof}

Next, we analyze the robustness of $\ell_{1/2}/\ell_2$ minimization in the presence of noise. Consider $\bm{b} = \bm{A}\bm{x} + \bm{e},$
where $\bm{x}$ is sparse and $\|\bm{e}\|_2 \leq \tilde{\epsilon}$ with $\tilde{\epsilon}$ known a priori. In this scenario, the corresponding optimization problem can be formulated as
\begin{equation}\label{eq:cons_noise}
	\min_{\bm{x}} \;\frac{\|\bm{x}\|_p^p}{\|\bm{x}\|_2^p} 
	\quad \text{s.t.} \quad \|\bm{A}\bm{x}-\bm{b}\|_2 \leq \tilde{\epsilon}.
\end{equation}
Let $\bm{x}^*$ denote a minimizer of \cref{eq:cons_noise}. Then $\|\bm{A}\bm{x}^* - \bm{A}\bm{x}\|_2 \leq 2\tilde{\epsilon},$ and it follows that $ \frac{\|\bm{x}^*\|_p^p}{\|\bm{x}^*\|_2^p} \leq \frac{\|\bm{x}\|_p^p}{\|\bm{x}\|_2^p} \leq s^{1-p/2}.$

\begin{theorem}
	Let $\bm{x} \in \mathbb{R}^n$ be an $s$-sparse vector and $\bm{x}^*$ be a minimizer of \cref{eq:cons_noise}. Decompose the error as $\bm{x}^* - \bm{x} = \bm{u} + \bm{w}$ with $\langle \bm{u}, \bm{w} \rangle = 0$.
	If $\beta := 2^{1+p/2} 5^p s^{1-p/2} \frac{\|\bm{u}\|_2^p}{\|\bm{u}\|_p^p} < 1$, then for any $\alpha \in (\beta,1)$, the following statements hold:
	
	(i) If the inequalities
	\begin{subequations}\label{eq:cond} 
		\begin{align}
			\langle \bm{x}, \bm{x}^* \rangle &\geq \left(1 - \frac{7\alpha^2}{32}\right) \|\bm{x}\|_2 \|\bm{x}^*\|_2, \label{eq:conda} \\
			\|\bm{x}\|_2 &\leq \|\bm{x}^*\|_2 \leq \left(1 + \frac{\alpha}{4}\right) \|\bm{x}\|_2 \label{eq:condb}
		\end{align}
	\end{subequations}
	are satisfied, then
	\begin{equation}\label{eq:rubresult1}
		\|\bm{x}^* - \bm{x}\|_2 \leq \sqrt{\alpha}\, \|\bm{x}\|_2.
	\end{equation}
	
	(ii) For any $p\in (0,1]$ , if at least one of the conditions in \cref{eq:cond} is violated, then
	\begin{equation}\label{eq:UpErr}
		\|\bm{x}^* - \bm{x}\|_p \leq \Biggl[1 + \Bigl(\frac{\alpha^p}{\alpha^p - \beta}\Bigr)^{1/p}\Biggr]^{1/p} \|\bm{w}\|_p.
	\end{equation}
\end{theorem}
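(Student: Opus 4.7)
The two parts rest on quite different mechanisms and I would treat them separately.

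\emph{Part (i).} This is the purely Hilbertian step. I would start from the polarization identity
\[
\|\bm{x}^*-\bm{x}\|_2^2 \;=\; \|\bm{x}^*\|_2^2 \;-\; 2\langle \bm{x},\bm{x}^*\rangle \;+\; \|\bm{x}\|_2^2,
\]
and feed in the two hypotheses. The right half of \cref{eq:condb} gives $\|\bm{x}^*\|_2^2\le(1+\alpha/4)^2\|\bm{x}\|_2^2=(1+\alpha/2+\alpha^2/16)\|\bm{x}\|_2^2$; the left half of \cref{eq:condb} supplies $\|\bm{x}\|_2\|\bm{x}^*\|_2\ge\|\bm{x}\|_2^2$, which combined with \cref{eq:conda} gives $\langle \bm{x},\bm{x}^*\rangle\ge(1-7\alpha^2/32)\|\bm{x}\|_2^2$. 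Substituting and collecting terms yields $\|\bm{x}^*-\bm{x}\|_2^2\le\frac{\alpha}{2}(1+\alpha)\|\bm{x}\|_2^2$, and the hypothesis $\alpha\in(\beta,1)\subset(0,1)$ makes $(1+\alpha)/2<1$, giving \cref{eq:rubresult1}. This step is routine algebra once one recognises that the two conditions are exactly the quantities appearing in the polarization identity.

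\emph{Part (ii).} The starting point is the ratio-minimality of $\bm{x}^*$, which (together with the $s$-sparsity of $\bm{x}$ and the norm inequality $\|\bm{x}\|_p^p\le s^{1-p/2}\|\bm{x}\|_2^p$ already used in \cref{thm:local_minimality}) reads
\[
\|\bm{x}^*\|_p^p\,\|\bm{x}\|_2^p \;\le\; \|\bm{x}\|_p^p\,\|\bm{x}^*\|_2^p \;\le\; s^{1-p/2}\|\bm{x}\|_2^p\,\|\bm{x}^*\|_2^p.
\]
Writing $\bm{x}^*=\bm{x}+\bm{u}+\bm{w}$, I would obtain a lower bound on $\|\bm{x}^*\|_p^p$ by applying the reverse $p$-triangle inequality $|a|^p\le|a+b|^p+|b|^p$ coordinatewise and summing, and an upper bound on $\|\bm{x}^*\|_2^p$ by using the orthogonality $\bm u\perp\bm w$ through the Pythagorean identity $\|\bm{u}+\bm{w}\|_2^2=\|\bm{u}\|_2^2+\|\bm{w}\|_2^2$. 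The failure of at least one of \cref{eq:conda}, \cref{eq:condb} restricts how tightly $\bm{x}^*$ can align with $\bm{x}$, and a short case split (condition (a) violated vs.\ condition (b) violated) converts that geometric information into a quantitative lower bound on $\|\bm{x}^*\|_2^p/\|\bm{x}\|_2^p - 1$ that must be balanced against the ratio-minimality. Tracking the numerical constants through this chain produces the factor $2^{1+p/2}5^p$ in exactly the right place, and rearranging yields the central $\ell_p$-inequality $\|\bm{u}\|_p^p\le(\alpha^p/(\alpha^p-\beta))^{1/p}\|\bm{w}\|_p^p$. Finally, the $p$-triangle inequality $\|\bm{u}+\bm{w}\|_p^p\le\|\bm{u}\|_p^p+\|\bm{w}\|_p^p$ gives $\|\bm{x}^*-\bm{x}\|_p^p\le\bigl[1+(\alpha^p/(\alpha^p-\beta))^{1/p}\bigr]\|\bm{w}\|_p^p$, which is \cref{eq:UpErr} after taking $p$-th roots.

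\emph{Main obstacle.} The hard part is, unambiguously, part (ii). Three pieces of information live in different languages and must be fused: ratio-minimality is a mixed $\ell_p/\ell_2$ statement, the decomposition $\bm u\perp\bm w$ is Euclidean, and the violation of the hypotheses in \cref{eq:cond} sits on the $\ell_2$ side. Translating this $\ell_2$-level obstruction back into a quasi-norm bound relating $\|\bm{u}\|_p$ and $\|\bm{w}\|_p$ is precisely where the constant $\beta$, which measures the sparsity-compatibility ratio $\|\bm{u}\|_2^p/\|\bm{u}\|_p^p$ of the component $\bm{u}$, has to enter. I expect both subcases of the disjunction in \cref{eq:cond} to need separate bookkeeping, with the more restrictive one dictating the final multiplicative constant $(\alpha^p/(\alpha^p-\beta))^{1/p}$; identifying that the two subcases unify into the single bound \cref{eq:UpErr} will be the most delicate step.
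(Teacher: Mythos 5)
Your part (i) is correct and is essentially the paper's argument: expand $\|\bm{x}^*-\bm{x}\|_2^2$ by polarization, insert \cref{eq:conda} (strengthened via $\|\bm{x}^*\|_2\ge\|\bm{x}\|_2$) and the right half of \cref{eq:condb}, and arrive at $\|\bm{x}^*-\bm{x}\|_2^2\le\tfrac{1}{2}(\alpha+\alpha^2)\|\bm{x}\|_2^2\le\alpha\|\bm{x}\|_2^2$. Nothing to add there.

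Part (ii), however, has a genuine gap: the two estimates that carry the entire proof are asserted rather than derived, and the object you propose to estimate is not the one that works. The argument that actually closes is a two-sided bound on the ratio of the \emph{error vector}, $\|\bm{x}^*-\bm{x}\|_p^p/\|\bm{x}^*-\bm{x}\|_2^p$, not on $\|\bm{x}^*\|_p^p/\|\bm{x}^*\|_2^p$. From above, the violation of \cref{eq:cond} yields $\|\bm{x}^*-\bm{x}\|_p^p/\|\bm{x}^*-\bm{x}\|_2^p\le \tfrac{2\cdot 5^p}{\alpha^p}s^{1-p/2}$, and this requires three separate subcases (failure of \cref{eq:conda}, which lower-bounds $\|\bm{x}^*-\bm{x}\|_2$; failure of \cref{eq:condb} with $\|\bm{x}^*\|_p^p<\|\bm{x}\|_p^p$, which forces $\|\bm{x}^*_{T^c}\|_p^p\le\|(\bm{x}^*-\bm{x})_T\|_p^p$ and an AM--GM step; failure of \cref{eq:condb} with $\|\bm{x}^*\|_2>(1+\alpha/4)\|\bm{x}\|_2$, which uses ratio-minimality). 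From below, the orthogonal split $\bm{x}^*-\bm{x}=\bm{u}+\bm{w}$ gives, when $\|\bm{u}\|_p>\|\bm{w}\|_p$, the bound $\|\bm{x}^*-\bm{x}\|_p^p/\|\bm{x}^*-\bm{x}\|_2^p\ge\pi(v)\,\|\bm{u}\|_p^p/\|\bm{u}\|_2^p$ with $\pi(v)=(1-v^p)/(1+v^2)^{p/4}$ and $v=\max\{\|\bm{w}\|_p/\|\bm{u}\|_p,\|\bm{w}\|_2/\|\bm{u}\|_2\}$. It is only the comparison of these two bounds, via $\pi(v)\ge 2^{-p/2}(1-v^p)$, that produces $v^p\ge 1-\beta/\alpha^p$ and hence $\|\bm{u}\|_p^p\le(\alpha^p/(\alpha^p-\beta))^{1/p}\|\bm{w}\|_p^p$; this is the \emph{only} place where the quantity $\|\bm{u}\|_2^p/\|\bm{u}\|_p^p$ defining $\beta$ enters, and your sketch never constructs it. Saying that ``tracking the numerical constants produces the factor $2^{1+p/2}5^p$ in exactly the right place'' is not a derivation; the exponent $2^{1+p/2}$ in $\beta$ records precisely the $2^{-p/2}$ loss in the lower bound $\pi(v)\ge 2^{-p/2}(1-v^p)$ times the factor $2$ in the upper bound, and without writing both bounds down you cannot see this. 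Your proposed route of separately lower-bounding $\|\bm{x}^*\|_p^p$ and upper-bounding $\|\bm{x}^*\|_2^p$ and ``balancing against ratio-minimality'' does not connect to $\bm{u}$ and $\bm{w}$ at all. Finally, the easy case $\|\bm{u}\|_p\le\|\bm{w}\|_p$ must be split off and handled directly ($\|\bm{x}^*-\bm{x}\|_p^p\le 2\|\bm{w}\|_p^p$, which is dominated by the right-hand side of \cref{eq:UpErr}); your sketch omits it.
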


\begin{proof}
	(i) Assume all the conditions in \cref{eq:cond} hold. Then we have 
	\begin{align*}
		\|\bm{x}^* - \bm{x}\|_2^p 
		&= \left(\|\bm{x}^*\|^2_2 + \|\bm{x}_0\|^2_2 - 2\langle \bm{x}^*, \bm{x}_0 \rangle \right)^{p/2} \\
		&\leq\left[\left(\frac{1}{2}\alpha^2 + \frac{1}{2}\alpha\right) \|\bm{x}\|^2_2\right]^{p/2} \overset{a}{\leq} \left(\alpha \|\bm{x}\|^2_2\right)^{p/2}=\alpha^{p/2}\|\bm{x}\|_2^p,
	\end{align*}
	where the inequality $\overset{a}{\leq}$ uses $\alpha^2 < \alpha$ for $0 < \alpha < 1$. Taking the $p$-th root on both sides yields the desired inequality \cref{eq:rubresult1}. 
	
	(ii) We first show that
	\begin{equation} \label{eq:UpperBoundForm}
		\frac{\|\bm{x}^* - \bm{x}\|_p^p}{\|\bm{x}^* - \bm{x}\|_2^p} \leq \frac{2 \cdot 5^p}{\alpha^p}s^{1-p/2}.
	\end{equation}
	\textbf{Case 1:} Condition \cref{eq:conda} is violated. Then
	\begin{equation}\label{eq:convexRoubs1}
		\begin{aligned}
			\|\bm{x}^* - \bm{x}\|_2^p 
			&= \left(\|\bm{x}^*\|_2^2 + \|\bm{x}\|_2^2 - 2\langle \bm{x}, \bm{x}^* \rangle \right)^{p/2} \\
			& \overset{}{\geq} \left(\|\bm{x}^*\|_2^2 + \|\bm{x}\|_2^2 - 2\left(1-\frac{7\alpha^2}{32}\right)\|\bm{x}^*\|_2\|\bm{x}\|_2 \right)^{p/2}\\
			& = \left(\frac{7\alpha^2}{32}\left(\|\bm{x}^*\|_2^2 + \|\bm{x}\|_2^2\right)\right)^{p/2}.
		\end{aligned}
	\end{equation}
	Using the $p$-convexity inequality $y \mapsto y^{2/p}$, it follows that
	\begin{equation}\label{eq:convexRoubs2}
		\begin{aligned}
			\left(\frac{7\alpha^2}{32}\left(\|\bm{x}^*\|_2^2 + \|\bm{x}\|_2^2\right)\right)^{p/2} 
			& \overset{}{\geq} \left(\frac{7\alpha^2}{32}\left( \frac{\left(\|\bm{x}^*\|_2^p + \|\bm{x}\|_2^p\right)^{2/p}}{2^{2/p-1}}\right)\right)^{p/2}\\
			& = \left(\frac{7\alpha^2}{32}\right)^{p/2} \frac{\|\bm{x}^*\|_2^p + \|\bm{x}\|_2^p}{2^{1-p/2}}.
		\end{aligned}
	\end{equation}
	By combining inequalities \cref{eq:convexRoubs1} and \cref{eq:convexRoubs2}, we obtain
	\begin{equation}
		\begin{aligned}
			\frac{\|\bm{x}^* - \bm{x}\|_p^p}{\|\bm{x}^* - \bm{x}\|_2^p} 
			& \overset{}{\leq} \frac{\|\bm{x}^*\|_p^p + \|\bm{x}\|_p^p}{\|\bm{x}^* - \bm{x}\|_2^p} \leq \frac{\|\bm{x}^*\|_p^p + \|\bm{x}\|_p^p}{\left(\frac{7\alpha^2}{32}\right)^{p/2} \frac{\|\bm{x}^*\|_2^p + \|\bm{x}\|_2^p}{2^{1-p/2}}} \\
			& \leq \frac{2^{1+2p}}{\left(7\alpha^2\right)^{p/2}}\frac{\|\bm{x}\|_p^p}{\|\bm{x}\|_2^p} \leq \frac{2^{1+2p}}{\left(7\alpha^2\right)^{p/2}}s^{1-p/2} \overset{}{\leq} \frac{2 \cdot 5^p}{\alpha^p}s^{1-p/2}.
		\end{aligned}		
	\end{equation}
	\textbf{Case 2:} Condition \cref{eq:condb} is violated. Consider two subcases:
	
	(a) Suppose that $\|\bm{x}^*\|_p^p < \|\bm{x}\|_p^p$. Decompose $\bm{x}^* - \bm{x} = (\bm{x}^* - \bm{x})_T + \bm{x}^*_{T^c}$. Then
	\begin{equation}
		\begin{aligned}
			\left \|\bm{x} \right \|_p^p >\left \|\bm{x}^* \right \|_p^p 
			&= \left \|\bm{x} + \left(\bm{x}^* - \bm{x}\right) \right \|_p^p = \left \|\bm{x} + \left(\bm{x}^* - \bm{x}\right)_T \right \|_p^p + \left \|\bm{x}^*_{T^c} \right \|_p^p \\
			& \geq \left \|\bm{x} \right \|_p^p - \left \|\left(\bm{x}^* - \bm{x}\right)_T \right \|_p^p + \left \|\bm{x}^*_{T^c} \right \|_p^p
		\end{aligned}		
	\end{equation}
	and so $\left \|\bm{x}^*_{T^c} \right \|_p^p \leq \left \|\left(\bm{x}^* - \bm{x}\right)_T \right \|_p^p$. Moreover, the arithmetic mean -- geometric mean (AM–GM) inequality implies
	\begin{equation} \label{eq:Am-gmIneq}
		\begin{aligned}
			\frac{\|\bm{x}^* - \bm{x}\|_p^p}{\|\bm{x}^* - \bm{x}\|_2^p} &= \frac{\left \|\left(\bm{x}^* - \bm{x}\right)_T \right \|_p^p + \left \|\bm{x}^*_{T^c} \right \|_p^p}{\left(\|\bm{x}^* - \bm{x}\|_2^2\right)^{p/2}} = \frac{\left \|\left(\bm{x}^* - \bm{x}\right)_T \right \|_p^p + \left \|\bm{x}^*_{T^c} \right \|_p^p}{\left(\left \|\left(\bm{x}^* - \bm{x}\right)_T \right \|_2^2 + \left \|\bm{x}^*_{T^c} \right \|_2^2\right)^{p/2}} \\
			& \overset{}{\leq} 2^{p/2}\frac{\left \|\left(\bm{x}^* - \bm{x}\right)_T \right \|_p^p + \left \|\bm{x}^*_{T^c} \right \|_p^p}{\left(\left \|\left(\bm{x}^* - \bm{x}\right)_T \right \|_2 + \left \|\bm{x}^*_{T^c} \right \|_2\right)^{p}}\\
			& \overset{}{\leq} 2^{2-p/2}\frac{\left \|\left(\bm{x}^* - \bm{x}\right)_T \right \|_p^p }{\left \|\left(\bm{x}^* - \bm{x}\right)_T \right \|_2^p} \leq 2^{2-p/2}s^{1-p/2} \leq \frac{2 \cdot 5^p}{\alpha^p}s^{1-p/2}.
		\end{aligned}
	\end{equation}
	
	(b) Suppose that $\|\bm{x}^*\|_p^p \geq \|\bm{x}\|_p^p$ and $\|\bm{x}^*\|_2 > (1+\alpha/4)\|\bm{x}\|_2$. It follows from $\|\bm{x}^*\|_p^p \geq \|\bm{x}\|_p^p$ that $\|\bm{x}^*\|_2^p \geq \|\bm{x}\|_2^p$ and so
	\begin{equation*}
		\begin{aligned}
			\frac{\|\bm{x}^* - \bm{x}\|_p^p}{\|\bm{x}^* - \bm{x}\|_2^p} 
			&\leq \frac{\|\bm{x}^*\|_p^p + \|\bm{x}\|_p^p}{\left(\|\bm{x}^* \|_2 - \|\bm{x}\|_2 \right)^p} 
			\leq \left(\frac{4+\alpha}{\alpha}\right)^p \left(\frac{\|\bm{x}^*\|_p^p}{\|\bm{x}^*\|_2^p} + \frac{\|\bm{x}\|_p^p}{\|\bm{x}\|_2^p}\right) \\
			&\leq 2 \left(\frac{4+\alpha}{\alpha}\right)^p s^{1-p/2} 
			\leq \frac{2 \cdot 5^p}{\alpha^p}s^{1-p/2},
		\end{aligned}
	\end{equation*}
	where the last inequality uses $\alpha < 1$. Thus, whenever \cref{eq:cond} is violated, \cref{eq:UpperBoundForm} holds. 
	
	Next we show that \cref{eq:UpErr} holds. Since $\bm{x}-\bm{x^\ast} = \bm{u} + \bm{w}$ with $\langle \bm{u}, \bm{w}\rangle=0$, for any $p \in (0,1]$ with $\|\bm{u}\|_p \le \|\bm{w}\|_p$, one has
	\begin{equation*}
		\|\bm{x} - \bm{x}^*\|_p^p \leq \|\bm{u}\|_p^p + \|\bm{w}\|_p^p 
		\leq 2\|\bm{w}\|_p^p 
		\leq \frac{2-\beta}{1-\beta}\|\bm{w}\|_p^p 
		\leq \left(1 + \left(\frac{\alpha^p}{\alpha^p-\beta}\right)^{1/p}\right)\|\bm{w}\|_p^p
	\end{equation*}
	for all $\beta \in (0,1)$, which yields \cref{eq:UpErr}. Similarly, for any $p \in (0,1]$ with $\|\bm{u}\|_p > \|\bm{w}\|_p$, we have
	\begin{equation*}
		\frac{\|\bm{x}^* - \bm{x}\|_p^p}{\|\bm{x}^* - \bm{x}\|_2^p} 
		\geq \frac{\|\bm{u}\|_p^p - \|\bm{w}\|_p^p}{\left(\|\bm{u}\|_2^2 + \|\bm{w}\|_2^2\right)^{p/2}} 
		= \frac{1 - \frac{\|\bm{w}\|_p^p}{\|\bm{u}\|_p^p}}{\left(\sqrt{1 + \tfrac{\|\bm{w}\|_2^2}{\|\bm{u}\|_2^2}}\right)^{p/2}} \cdot \frac{\|\bm{u}\|_p^p}{\|\bm{u}\|_2^p} 
		\geq \pi(v)\frac{\|\bm{u}\|_p^p}{\|\bm{u}\|_2^p},
	\end{equation*}
	where $\pi(v) := \frac{1 - v^p}{\left(\sqrt{1 + v^2}\right)^{p/2}}$ and $v := \max\left\{\frac{\|\bm{w}\|_p}{\|\bm{u}\|_p}, \frac{\|\bm{w}\|_2}{\|\bm{u}\|_2}\right\}.$ If $\pi(v)\frac{\|\bm{u}\|_p^p}{\|\bm{u}\|_2^p} > 2\cdot\left(\frac{5}{\alpha}\right)^p s^{1-p/2}$, then inequality \cref{eq:UpErr} is violated, which implies that \cref{eq:cond} holds and so does \cref{eq:rubresult1}. Hence, it remains to analyze the case 
	\begin{equation*}
		\pi(v)\frac{\|\bm{u}\|_p^p}{\|\bm{u}\|_2^p} \leq 2\cdot\left(\frac{5}{\alpha}\right)^p s^{1-p/2}.
	\end{equation*}
	Since $\pi(v) \geq \tfrac{1}{2^{p/2}}(1 - v^p)$ for $v>0$, we obtain
	\begin{equation*}
		\frac{\beta}{\alpha^p 2^{p/2}} 
		= \frac{2 \cdot 5^p s^{1-p/2}}{\alpha^p}\frac{\|\bm{u}\|_2^p}{\|\bm{u}\|_p^p} 
		\geq \pi(v) 
		\geq \frac{1}{2^{p/2}}(1 - v^p),
	\end{equation*}
	which implies $v^p \geq 1 - \tfrac{\beta}{\alpha^p}$. Hence, one has $ \|\bm{u}\|_p^p = v^{-p}\|\bm{w}\|_p^p \leq (1 -\beta/\alpha^p)^{-1/p}\|\bm{w}\|_p^p.$ For $0<p<1$, the fact that $\|\bm{u}-\bm{w}\|_p^p \leq \|\bm{u}\|_p^p + \|\bm{w}\|_p^p$ shows that \cref{eq:UpErr} is true.
\end{proof}

\section{Algorithmic Scheme} \label{sec:alg}

In this section, we develop an ADMM iteration algorithm for $\ell_{1/2}/ \ell_2$ minimization problem and employ an inner ADMM scheme for the $\bm{x}$-subproblem.

\subsection{The $\ell_{1/2}/ \ell_2$ minimization via ADMM iteration}

We now present the minimization of the unconstrained $\ell_{1/2}/ \ell_2$ problem \cref{eq:OptimizatopnObjectUnCon} via the ADMM framework. 
With this aim, we introduce an auxiliary variable $\bm{y}$, which reformulates \cref{eq:OptimizatopnObjectUnCon} into the constrained optimization problem
\begin{equation}\label{eq:ConstrainedFormulation}
	\begin{aligned}
		& \min_{\bm{x}, \bm{y} \in \mathbb{R}^n} \quad \zeta \frac{\|\bm{x}\|_{1/2}^{1/2}}{\|\bm{x}\|_2^{1/2}} + g(\bm{y}), \\
		& \quad \text{s.t.} \quad \quad \bm{x} = \bm{y},
	\end{aligned}
\end{equation}
where $g(\bm{y}) := \tfrac{1}{2} \|\bm{A}\bm{y} - \bm{b}\|_2^2$. The augmented Lagrangian function of \cref{eq:ConstrainedFormulation} is given by
\begin{equation}\label{eq:AugmentedLagrangian}
	\mathcal{L}_{\rho}(\bm{x}, \bm{y}, \boldsymbol{\lambda}) 
	= \zeta \frac{\|\bm{x}\|_{1/2}^{1/2}}{\|\bm{x}\|_2^{1/2}} 
	+ g(\bm{y}) 
	+ \langle \boldsymbol{\lambda}, \bm{x} - \bm{y} \rangle 
	+ \frac{\rho}{2} \|\bm{x} - \bm{y}\|_2^2,
\end{equation}
where $\bm{\lambda}$ denotes the Lagrange multiplier and $\rho>0$ is the penalty parameter. By the augmented Lagrangian function \cref{eq:AugmentedLagrangian}, the ADMM iteration scheme is then expressed as
\begin{subequations}\label{eq:admm_updates}
	\begin{empheq}[left=\empheqlbrace]{align}
		& \bm{x}^{k+1} = \arg\min_{\bm{x} \in \mathbb{R}^n} \; 
		\zeta \frac{\|\bm{x}\|_{1/2}^{1/2}}{\|\bm{x}\|_2^{1/2}} 
		+ \frac{\rho}{2} \left\| \bm{x} - \bm{y}^k + \frac{\boldsymbol{\lambda}^k}{\rho} \right\|_2^2 , \label{eq:admm_x} \\
		& \bm{y}^{k+1} = \arg\min_{\bm{y} \in \mathbb{R}^n} \; 
		g(\bm{y}) + \frac{\rho}{2} \left\| \bm{y} - \bm{x}^{k+1} - \frac{\bm{\lambda}^k}{\rho} \right\|_2^2 , \label{eq:admm_y} \\
		& \bm{\lambda}^{k+1} = \bm{\lambda}^k + \rho (\bm{x}^{k+1} - \bm{y}^{k+1}). \label{eq:admm_z}
	\end{empheq}
\end{subequations}

The $\bm{y}$-subproblem in \cref{eq:admm_y} admits a closed-form solution
\begin{equation}\label{eq:y-updata}
	\bm{y}^{k+1} = \left( \bm{I}_n + \tfrac{1}{\rho} \bm{A}^\top \bm{A} \right)^{-1} 
	\left( \dfrac{\bm{A}^\top \bm{b}}{\rho} + \dfrac{\bm{\lambda}^k}{\rho} + \bm{x}^{k+1} \right).
\end{equation}
In addition to the Sherman--Morrison--Woodbury (SMW) formula, the linear system \cref{eq:y-updata} can also be solved iteratively via the conjugate gradient (CG) method, which is particularly suitable for large-scale or sparse matrices.

\subsection{Inner ADMM for the \texorpdfstring{$\bm{x}$}{x}-subproblem} 
To solve the $\bm{x}$-subproblem in \cref{eq:admm_x}, we employ an inner ADMM scheme, denoted as $\text{ADMM}_\text{inner}$. Specifically, introduce an auxiliary variable $\bm{u}$ and reformulate \cref{eq:admm_x} as
\begin{equation}\label{eq:admm_i}
	\begin{aligned}
		& \min_{\bm{x}, \bm{u} \in \mathbb{R}^n} \quad \zeta \frac{\|\bm{x}\|_{1/2}^{1/2}}{\|\bm{u}\|_2^{1/2}} + \frac{\rho}{2} \|\bm{x} - \bm{\theta}^k\|_2^2 \\
		& \;\; \text{s.t.} \quad \quad \quad  \bm{x} = \bm{u},
	\end{aligned}
\end{equation}
where $\bm{\theta}^k = \bm{y}^k - \bm{\lambda}^k / \rho$. The augmented Lagrangian function for \cref{eq:admm_i} is
\begin{equation}\label{eq:innerADMM}
	\mathcal{L}_{\gamma}^k(\bm{x}, \bm{u}, \bm{\vartheta}) = \zeta \frac{\|\bm{x}\|_{1/2}^{1/2}}{\|\bm{u}\|_2^{1/2}} + \frac{\rho}{2} \|\bm{x} - \bm{\theta}^k\|_2^2 + \langle \bm{\vartheta}, \bm{x} - \bm{u} \rangle + \frac{\gamma}{2} \|\bm{x} - \bm{u}\|_2^2,
\end{equation}
where $\bm{\vartheta}$ is the Lagrange multiplier and $\gamma > 0$ is the penalty parameter. The $\text{ADMM}_\text{inner}$ iteration framework for minimizing \cref{eq:admm_i} is given by
\begin{subequations}\label{eq:inneradmm_updates}
	\begin{empheq}[left=\empheqlbrace]{align}
		&\bm{x}_{t+1} = \arg\min_{\bm{x} \in \mathbb{R}^n} \; \zeta \frac{\|\bm{x}\|_{1/2}^{1/2}}{\|\bm{u}_t\|_2^{1/2}} + \frac{\rho}{2} \|\bm{x} - \bm{\theta}^k\|_2^2 + \frac{\gamma}{2} \|\bm{x} - \bm{u}_t + \frac{\bm{\vartheta}_t}{\gamma}\|_2^2 , \label{eq:inneradmm_x}\\
		&\bm{u}_{t+1} = \arg\min_{\bm{u} \in \mathbb{R}^n} \; \zeta \frac{\|\bm{x}_{t+1}\|_{1/2}^{1/2}}{\|\bm{u}\|_2^{1/2}} + \frac{\delta}{2} \|\bm{u} - \bm{x}_{t+1} - \frac{\bm{\vartheta}_t}{\gamma}\|_2^2 , \label{eq:inneradmm_u} \\
		&\bm{\vartheta}_{t+1} = \bm{\vartheta}_t + \gamma(\bm{x}_{t+1} - \bm{u}_{t+1}),\label{eq:inneradmm_v}
	\end{empheq}
\end{subequations}
where the subscript $t$ denotes the inner iteration index, in contrast to the superscript $k$ for the outer iterations.

Through algebraic manipulation, the subproblem \cref{eq:inneradmm_x} can be rewritten as
\begin{equation}\label{eq:xsubproblem}
	\begin{aligned}
		\bm{x}_{t+1} &= \arg\min_{\bm{x} \in \mathbb{R}^n} \; \zeta \frac{\|\bm{x}\|_{1/2}^{1/2}}{\|\bm{u}_t\|_2^{1/2}} + \frac{\rho}{2} \|\bm{x} - \bm{\theta}^k\|_2^2 + \frac{\gamma}{2} \|\bm{x} - \bm{u}_t + \frac{\bm{\vartheta}_t}{\gamma}\|_2^2 \\
		& =  \arg\min_{\bm{x} \in \mathbb{R}^n} \;\frac{\gamma + \rho}{2}\left(\|\bm{x}-\bm{m}_t\|_2^2 + \tilde{\delta}\|\bm{x}\|_{1/2}^{1/2}  \right) + C_{\gamma},
	\end{aligned}
\end{equation}
where $C_{\gamma} = \frac{\gamma\rho}{2\left(\gamma + \rho\right)}\|\bm{\theta}_k-\bm{p}_t\|_2^2$, $\tilde{\delta} = \frac{2\zeta}{\left(\gamma+\rho\right)\|\bm{u}_t\|_2^{1/2}}$, $\bm{p}_t = \bm{u}_t-\bm{\vartheta}_t/\gamma$, and $\bm{m}_t = \frac{\rho\bm{\theta}_k + \gamma \bm{p}_t}{\gamma + \rho}$. This objective is component-wise separable, allowing the $\bm{x}$-subproblem to be solved independently for each component $x_{t+1}^i$ via the scalar minimization
\begin{equation}\label{eq:xsubproblem_scalar}
	x_{t+1}^i = \arg\min_{x \in \mathbb{R}} \left\{ \tilde{\delta} |x|^{1/2} + \frac{1}{2}(x - m_t^i)^2 \right\},\quad \forall i.
\end{equation}
The closed-form solution \cite{cao2013fast} for \cref{eq:xsubproblem_scalar} is
\begin{equation} \label{eq:updata_subx}
	x_{t+1}^i =
	\begin{cases}
		\dfrac{2}{3}|m_t^i|\left(1 + \cos\left(\dfrac{2\pi}{3} - \dfrac{2\varphi_{\delta}(m_t^i)}{3}\right)\right) & \text{if } m_t^i > p(\tilde{\delta}), \\
		0 & \text{if } |m_t^i| \leq p(\tilde{\delta}), \\
		-\dfrac{2}{3}|m_t^i|\left(1 + \cos\left(\dfrac{2\pi}{3} - \dfrac{2\varphi_{\delta}(m_t^i)}{3}\right)\right) & \text{if } m_t^i < -p(\tilde{\delta}),
	\end{cases}
\end{equation}
where $\varphi_{\tilde{\delta}}(m_t^i) = \arccos\left(\tfrac{\tilde{\delta}}{8}\left(\tfrac{|m_t^i|}{3}\right)^{-\frac{3}{2}}\right)$ and $p(\tilde{\delta}) = \tfrac{\sqrt[3]{54}}{4} \tilde{\delta}^{\frac{2}{3}}$.

For the $\bm{u}$-subproblem, let $c_{t+1} = \|\bm{x}_{t+1}\|_{1/2}^{1/2}$ and $\bm{d}_{t+1} = \bm{x}_{t+1} + \frac{\bm{\vartheta}_t}{\gamma}$. Then subproblem can be simplified to
\begin{equation}\label{eq:u-subproblem}
	\bm{u}_{t+1} = \arg\min_{\bm{u}\in \mathbb{R}^n} \; \zeta \frac{c_{t+1}}{\| \bm{u} \|_2^{1/2}} + \frac{\tilde{\delta}}{2} \| \bm{u} - \bm{d}_{t+1} \|_2^2.
\end{equation}
If $c_{t+1} = 0$, then $\bm{u}_{t+1} = \bm{d}_{t+1}$. If $\bm{d}_{t+1} = 0$, any $\bm{u}_{t+1}$ satisfying $\|\bm{u}_{t+1}\|_2^{1/2} = \left(\zeta c_{t+1} / (2\tilde{\delta})\right)^{1/5}$ is an optimal solution. For $\bm{d}_{t+1} \neq 0$ and $c_{t+1} \neq 0$, differentiating the objective function in \cref{eq:u-subproblem} yields $\left( -\frac{\zeta c_{t+1}}{2} \|\bm{u}\|_2^{-5/2} + \tilde{\delta} \right) \bm{u} = \tilde{\delta} \bm{d}_{t+1}$, which implies that $\bm{u} = \tau_{t+1} \bm{d}_{t+1}$ for some $\tau_{t+1} \geq 0$. Let $\eta_{t+1} = \|\bm{d}_{t+1}\|_2$. For $\eta_{t+1} > 0$, determining $\bm{u}$ reduces to finding $\tau_{t+1}$ as the root of $f_{\tau}(\tau_{t+1}) = \tau_{t+1}^5 - \tau_{t+1}^3 - \kappa _{t+1} = 0,$ where $\kappa _{t+1} = \frac{\zeta c_{t+1}}{2\tilde{\delta} (\eta_{t+1})^{5/2}} > 0$. The function $f_{\tau}(\tau_{t+1})$ is strictly unimodal on $(0, \infty)$, with a unique global minimum at $\tau^* = \sqrt{3/5}$. The function is strictly decreasing on $(0, \tau^*)$ and strictly increasing on $(\tau^*, \infty)$. For $\kappa _{t+1} > 0$, the equation $\tau_{t+1}^5 - \tau_{t+1}^3 = \kappa _{t+1}$ has a unique real root in $(1, \infty)$. Since general quintic equations lack closed-form solutions, numerical methods such as bisection or Newton method are used to approximate $\tau_{t+1} \in (1, \infty)$. In summary, the update formula for $\bm{u}$ is given by
\begin{equation}\label{eq:updata_subu}
	\bm{u}_{t+1} =
	\begin{cases}
		\bm{e}_{t+1} & \bm{d}_{t+1} = 0, \\
		\bm{d}_{t+1} & \bm{d}_{t+1} \neq 0, c_{t+1}=0,\\
		\tau_{t+1} \bm{d}_{t+1} & \bm{d}_{t+1} \neq 0, c_{t+1}\neq 0,
	\end{cases}
\end{equation}
where $\bm{e}_{t+1}$ is any vector satisfying $\|\bm{e}_{t+1}\|_2^{1/2} = \left(\zeta c_{t+1} / (2\tilde{\delta})\right)^{1/5}$. The overall ADMM framework for minimizing the unconstrained optimization problem \cref{eq:OptimizatopnObjectUnCon} is summarized in \cref{alg:admm_full}.
\begin{algorithm}[!h]
	\caption{ADMM with Inner Iteration for $\ell_{1/2}/\ell_2$ Minimization}
	\label{alg:admm_full}
	\begin{algorithmic}[1]
		\REQUIRE $\bm{A}, \bm{b}$; parameters $\zeta, \rho, \gamma, > 0$; $K$ (outer), $T$ (inner); tolerances $\epsilon_{\text{out}}, \epsilon_{\text{inner}}.$
		\STATE Initialize $\bm{x}_0, \bm{y}_0, \bm{\lambda}_0$
		\FOR{$k = 0$ \TO $K-1$}
		\STATE $\bm{\theta}_k \gets \bm{y}_k - \bm{\lambda}_k / \rho$
		\STATE Initialize inner variables $\bm{u}_0, \bm{\vartheta}_0$
		\FOR{$t = 0$ \TO $T-1$}
		\STATE Update $\bm{x}_{t+1}, \bm{u}_{t+1}, \bm{\vartheta}_{t+1}$ via \cref{eq:updata_subx},\cref{eq:updata_subu}, \cref{eq:inneradmm_v}
		\IF{$\|\bm{x}_{t+1} - \bm{u}_{t+1}\|_2 < \epsilon_{\text{inner}}$}
		\STATE \textbf{break inner loop}
		\ENDIF
		\ENDFOR
		\STATE $\bm{x}_{k+1} \gets \bm{x}_T$  \COMMENT{Use final inner iterate}
		\STATE Update $\bm{y}_{k+1}, \bm{\lambda}_{k+1}$ via \cref{eq:y-updata}, \cref{eq:admm_z}
		\IF{$\|\bm{x}_{k+1} - \bm{y}_{k+1}\|_2 < \epsilon_{\text{out}}$}
		\STATE \textbf{break outer loop}
		\ENDIF
		\ENDFOR
		\RETURN $\bm{x}_K, \bm{y}_K$
	\end{algorithmic}
\end{algorithm}

\section{Convergence analysis}\label{sec:convergence}

This section analyzes the convergence and convergence rate of the ADMM framework introduced in \cref{eq:admm_updates}. The analysis builds upon the framework and techniques established in \cite{lou2018fast,pang2018decomposition,tao2022minimization,wang2019global, zeng2021analysis}. 

We initially present the formal definitions of the Kurdyka-\L{}ojasiewicz (K\L{}) property and the K\L{} exponent \cite{bolte2007lojasiewicz,kurdyka1998gradients,lojasiewicz1963propriete}, which will be employed to establish results concerning convergence and the convergence rate.

\begin{definition}[K\L{} property]\label{def:kl}
	Let $\varphi:\mathbb{R}^n \to (-\infty,+\infty]$ be proper and l.s.c. We say that $\varphi$ satisfies the \emph{K\L{} property} at $\hat{\bm{x}} \in \mathrm{dom}\,\partial \varphi$ if $\exists \, \tilde{c} \in (0,\infty],\, r > 0$, and a concave $\psi:[0,\tilde{c}) \to [0,\infty)$ with $\psi(0)=0$, $\psi \in C^1((0,\tilde{c}))$, $\psi'>0$, such that $\psi'\left(\varphi(\bm{x})-\varphi(\hat{\bm{x}})\right)\cdot \mathrm{dist}(0,\partial \varphi(\bm{x})) \,\geq\, 1$
	for all $\bm{x} \in \mathbb{B}_r(\hat{\bm{x}})$ with $\varphi(\hat{\bm{x}})<\varphi(\bm{x})<\varphi(\hat{\bm{x}})+\tilde{c}$.  
\end{definition}

If the inequality holds for all $\bm{x} \in \Omega \cap \mathbb{B}_r(\hat{\bm{x}})$ with $\Omega \subseteq \mathbb{R}^n$ closed, then $\varphi$ satisfies the K\L{} property on $\Omega$ at $\hat{\bm{x}}$.  
\begin{definition}[K\L{} exponent]\label{def:kl-exponent}
	Let $\varphi:\mathbb{R}^n \to (-\infty,+\infty]$.  
	We say that $\varphi$ satisfies the K\L{} property at $\hat{\bm{x}}\in\mathrm{dom}\,\partial \varphi$ with exponent $s$ if there exist constants $\tilde{\omega} >0$, $\epsilon>0$, and $\tilde{c}>0$ such that $\mathrm{dist}(0,\partial \varphi(\bm{x})) \,\geq\, \tilde{\omega}(\varphi(\bm{x})-\varphi(\hat{\bm{x}}))^{\upsilon},$ for all $\bm{x}\in\mathbb{B}_\epsilon(\hat{\bm{x}})$ with   $\varphi(\hat{\bm{x}})<\varphi(\bm{x})<\varphi(\hat{\bm{x}})+\tilde{c}$.
\end{definition}

If a common $\upsilon \in [0,1)$ holds for all $\hat{\bm{x}} \in \mathrm{dom}\,\partial \varphi$, then $\varphi$ is a K\L{} function with exponent $\upsilon$. For any closed $\Omega \subseteq \mathbb{R}^n$, the property holds on $\Omega$ at $\hat{\bm{x}}$ if the inequality is satisfied for all $\bm{x} \in \Omega \cap \mathbb{B}_r(\hat{\bm{x}})$.
\subsection{Existence of Solutions}
This subsection focuses on the existence of solutions to the optimization problem \cref{eq:OptimizatopnObjectUnCon}.

\begin{theorem}\label{thm:SolutionExistence}
	Let $ \bm{A} \in \mathbb{R}^{m \times n} $, $ \bm{b} \in \mathbb{R}^m $, and $ \zeta > 0 $. Consider the nonconvex optimization problem \cref{eq:OptimizatopnObjectUnCon}. If $ \bm{b} \notin ker(\bm{A}^\top) $, there exists a direction $ \bm{\tilde{v}} \in \mathbb{R}^n \setminus \{\bm{0}\} $ and a constant $ \epsilon^* > 0 $, such that for all $ \epsilon \in (0, \epsilon^*) $, we have $ \mathcal{H}(\epsilon \bm{\tilde{v}}) < \mathcal{H}(\bm{0})$.
\end{theorem}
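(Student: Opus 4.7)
The plan is to show that $\bm{0}$ is not a local minimum of $\mathcal{H}$ by exhibiting an explicit one-parameter descent ray $\{\epsilon \bm{\tilde{v}}\}_{\epsilon > 0}$. The key ingredient is the scale invariance of the ratio $R(\bm{x}) := \|\bm{x}\|_{1/2}^{1/2}/\|\bm{x}\|_2^{1/2}$: for any $\bm{\tilde{v}} \ne \bm{0}$ and $\epsilon > 0$ one has $R(\epsilon \bm{\tilde{v}}) = R(\bm{\tilde{v}})$. Combining this with the convention $\mathcal{H}(\bm{0}) = \zeta + \tfrac{1}{2}\|\bm{b}\|_2^2$, I will expand
\begin{equation*}
\mathcal{H}(\epsilon \bm{\tilde{v}}) - \mathcal{H}(\bm{0}) \;=\; \zeta\bigl(R(\bm{\tilde{v}}) - 1\bigr) \;-\; \epsilon\, \langle \bm{A}\bm{\tilde{v}}, \bm{b}\rangle \;+\; \tfrac{\epsilon^2}{2}\|\bm{A}\bm{\tilde{v}}\|_2^2,
\end{equation*}
and then select $\bm{\tilde{v}}$ so that the right-hand side becomes strictly negative for all small $\epsilon>0$.

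The crucial observation is that Lemma \ref{lem:RatioBounds} is tight at $R(\bm{x}) = 1$ on the set of $1$-sparse vectors, as one verifies directly from $\|a\bm{e}_i\|_{1/2}^{1/2} = |a|^{1/2} = \|a\bm{e}_i\|_2^{1/2}$. Choosing $\bm{\tilde{v}}$ to be $1$-sparse therefore annihilates the constant term $\zeta(R(\bm{\tilde{v}}) - 1)$, and I am left with the much simpler requirement
\begin{equation*}
-\epsilon\,\langle \bm{A}\bm{\tilde{v}}, \bm{b}\rangle + \tfrac{\epsilon^2}{2}\|\bm{A}\bm{\tilde{v}}\|_2^2 < 0,
\end{equation*}
which is a standard linear-versus-quadratic comparison, solvable as soon as the linear coefficient is strictly positive.

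To produce such a $1$-sparse direction, I will invoke the hypothesis $\bm{b}\notin\ker(\bm{A}^\top)$, i.e.\ $\bm{A}^\top\bm{b}\ne \bm{0}$, and select an index $i\in[n]$ with $(\bm{A}^\top\bm{b})_i\ne 0$. Setting $\bm{\tilde{v}} := \mathrm{sgn}\bigl((\bm{A}^\top\bm{b})_i\bigr)\,\bm{e}_i$ gives $\langle \bm{A}\bm{\tilde{v}}, \bm{b}\rangle = |(\bm{A}^\top\bm{b})_i| > 0$ and $\bm{A}\bm{\tilde{v}} = \pm \bm{A}_i$, with $\bm{A}_i\neq \bm{0}$ because otherwise $(\bm{A}^\top\bm{b})_i = \bm{A}_i^\top\bm{b} = 0$, contradicting the choice of $i$. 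Substituting, the displayed expression reduces to $\epsilon\bigl(\tfrac{\epsilon}{2}\|\bm{A}_i\|_2^2 - |(\bm{A}^\top\bm{b})_i|\bigr)$, which is strictly negative for every $\epsilon \in (0,\epsilon^*)$ with $\epsilon^* := 2|(\bm{A}^\top\bm{b})_i|/\|\bm{A}_i\|_2^2 > 0$.

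Since the witness $\bm{\tilde{v}}$ is produced constructively and the remaining estimate is an elementary quadratic comparison, I do not anticipate a substantive obstacle. The only point deserving care is the interpretation of the ratio at the origin: the convention $\|\bm{0}\|_{1/2}/\|\bm{0}\|_2 = 1$ must be read consistently with the continuous limit along $1$-sparse rays, which is indeed the case. Once established, this theorem is the mechanism that rules out the trivial stationary point $\bm{0}$, and it can be combined with a coercivity or level-boundedness argument elsewhere in the paper to assert existence of a global minimizer of \cref{eq:OptimizatopnObjectUnCon}.
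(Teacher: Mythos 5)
Your proof is correct, and it follows the same basic template as the paper's: expand $\mathcal{H}(\epsilon\bm{\tilde{v}})-\mathcal{H}(\bm{0})=\zeta\bigl(R(\bm{\tilde{v}})-1\bigr)-\epsilon\langle\bm{A}\bm{\tilde{v}},\bm{b}\rangle+\tfrac{\epsilon^2}{2}\|\bm{A}\bm{\tilde{v}}\|_2^2$ using scale invariance of the ratio, then do a linear-versus-quadratic comparison in $\epsilon$. The difference is in how the constant term $\delta_{\bm{\tilde{v}}}=\zeta(R(\bm{\tilde{v}})-1)\ge 0$ is handled. The paper picks an arbitrary $\bm{\tilde{v}}$ with $\langle\bm{A}\bm{\tilde{v}},\bm{b}\rangle>0$, so $\delta_{\bm{\tilde{v}}}$ may be strictly positive; it then has to assume the discriminant $\Delta=\beta_{\bm{\tilde{v}}}^2-2\alpha_{\bm{\tilde{v}}}\delta_{\bm{\tilde{v}}}$ is positive (a condition it never verifies, and which can fail for small $\zeta$-independent reasons only if $\delta_{\bm{\tilde{v}}}$ is large), and even then the claim that $\phi<0$ on $(0,\epsilon^*)$ with $\epsilon^*$ the \emph{smaller} root is only valid when $\delta_{\bm{\tilde{v}}}=0$, since $\phi(0)=\delta_{\bm{\tilde{v}}}>0$ would force $\phi>0$ on the initial segment before the first root. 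Your choice of a $1$-sparse direction $\bm{\tilde{v}}=\mathrm{sgn}\bigl((\bm{A}^\top\bm{b})_i\bigr)\bm{e}_i$, for which $R(\bm{\tilde{v}})=1$ exactly, forces $\delta_{\bm{\tilde{v}}}=0$ by construction, so the descent is unconditional and the explicit threshold $\epsilon^*=2|(\bm{A}^\top\bm{b})_i|/\|\bm{A}_i\|_2^2$ works for every $\zeta>0$. In short, you buy a cleaner and fully rigorous argument that closes the case analysis the paper leaves open; the paper's version is more general in allowing any descent direction but, as written, only actually proves the claim in the regime your construction realizes.
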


\begin{proof}
	The proof is structured into two principal steps.
	
	\paragraph{(i) Existence of direction $ \bm{\tilde{v}} $}
	Since $ \bm{b} \notin ker(\bm{A}^\top) $, we have $ \bm{A}^\top \bm{b} \neq 0 $, which implies there exists $ \bm{\tilde{v}} \in \mathbb{R}^n $ such that $\langle \bm{A\tilde{v}}, \bm{b} \rangle = \bm{\tilde{v}}^\top \bm{A}^\top \bm{b} \neq 0$. To ensure that the perturbation decreases the objective function, we may assume without loss of generality that $ \langle \bm{A\tilde{v}}, \bm{b} \rangle > 0 $, otherwise we replace $ \bm{\tilde{v}} $ by $ -\bm{\tilde{v}} $. Hence, there exists a direction $ \bm{\tilde{v}} \in \mathbb{R}^n \setminus \{\bm{0}\} $ such that $ \langle \bm{A\tilde{v}}, \bm{b} \rangle > 0 $.
	
	\paragraph{(ii) Perturbation construction and descent analysis}
	Define the perturbed point $ \bm{x}_\epsilon := \epsilon \bm{\tilde{v}} $, where $ \epsilon > 0 $. Then the objective function becomes $ \mathcal{H}(\bm{x}_\epsilon) = \zeta  \frac{\|\bm{x}_\epsilon\|_{1/2}^{1/2}}{\|\bm{x}_\epsilon\|_2^{1/2}} + \frac{1}{2} \|\bm{Ax}_\epsilon - \bm{b}\|_2^2. $ Let $ C_{\bm{\tilde{v}}} := \frac{\|\bm{\tilde{v}}\|_{1/2}^{1/2}}{\|\bm{\tilde{v}}\|_2^{1/2}} \geq 1, \beta_{\bm{\tilde{v}}} := \langle \bm{A\tilde{v}}, \bm{b} \rangle > 0, \alpha_{\bm{\tilde{v}}} := \|\bm{A\tilde{v}}\|_2^2 > 0$ and $\delta_{\bm{\tilde{v}}} := \zeta(C_{\bm{\tilde{v}}} - 1) \geq 0$. Then we have $
	\frac{\|\bm{x}_\epsilon\|_{1/2}}{\|\bm{x}_\epsilon\|_2^{1/2}} = C_{\bm{\tilde{v}}}$ and $\|\bm{Ax}_\epsilon - \bm{b}\|_2^2 = \epsilon^2 \alpha_{\bm{\tilde{v}}} - 2\epsilon \beta_{\bm{\tilde{v}}} + \|\bm{b}\|_2^2 $.
	Thus, the change is $ \mathcal{H}(\bm{x}_\epsilon) - \mathcal{H}(\bm{0}) = \zeta(C_{\bm{\tilde{v}}} - 1) - \epsilon \beta_{\bm{\tilde{v}}} + \frac{1}{2} \epsilon^2 \alpha_{\bm{\tilde{v}}} =: \phi(\epsilon)$. It follows that $ \phi(\epsilon) = \delta_{\bm{\tilde{v}}} - \beta_{\bm{\tilde{v}}} \epsilon + \frac{1}{2} \alpha_{\bm{\tilde{v}}} \epsilon^2 $ is a quadratic function. The discriminant of $ \phi(\epsilon) $ is given by $ \Delta := \beta_{\bm{\tilde{v}}}^2 - 2 \alpha_{\bm{\tilde{v}}} \delta_{\bm{\tilde{v}}}.$
	If $ \Delta > 0 $, then $ \phi(\epsilon) $ has two real roots, and the smaller positive root is $ \epsilon^* := \frac{\beta_{\bm{\tilde{v}}} - \sqrt{\beta_{\bm{\tilde{v}}}^2 - 2 \alpha_{\bm{\tilde{v}}} \delta_{\bm{\tilde{v}}}}}{\alpha_{\bm{\tilde{v}}}} > 0. $
	Therefore, for all $ \epsilon \in (0, \epsilon^*) $, we have $ \phi(\epsilon) < 0 $, i.e., $ \mathcal{H}(\epsilon \bm{\tilde{v}}) < \mathcal{H}(\bm{0})$, which implies that $\bm{0}$ is not a global minimizer.
\end{proof}

\begin{theorem}\label{thm:solutionnonempty}
	Let $ \bm{A} \in \mathbb{R}^{m \times n} $ be a matrix of full row rank ($\text{rank}(\bm{A}) = m < n $), $\mathbf{b} \in \mathbb{R}^m \setminus \{ \bm{0} \}$, and the parameter $\zeta \in \mathbb{R}$ satisfy $ 0 < \zeta < \frac{\|\bm{b}\|_2^2}{2(n^{3/4} - 1)} $. For the optimization problem \cref{eq:OptimizatopnObjectUnCon}, the optimal solution set $\arg \min_{\bm{x} \in \mathbb{R}^n} \mathcal{H}(\bm{x})$ is non-empty.
\end{theorem}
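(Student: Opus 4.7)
The plan is to apply the direct method: verify $\mathcal{H}$ is proper and l.s.c., then show that every minimizing sequence is bounded so a convergent subsequence furnishes a minimizer. Write $R(\bm{x}):=\|\bm{x}\|_{1/2}^{1/2}/\|\bm{x}\|_2^{1/2}$ with the convention $R(\bm{0})=1$, so $\mathcal{H}(\bm{x})=\zeta R(\bm{x})+\tfrac12\|\bm{A}\bm{x}-\bm{b}\|_2^2$. Continuity of $\mathcal{H}$ on $\mathbb{R}^n\setminus\{\bm{0}\}$ is immediate; the only subtle point for l.s.c.\ is at $\bm{0}$, where the convention together with \cref{lem:RatioBounds} (which gives $R(\bm{x})\ge 1$) yields $\liminf_{\bm{x}\to\bm{0}}\mathcal{H}(\bm{x})\ge \zeta+\tfrac12\|\bm{b}\|_2^2 = \mathcal{H}(\bm{0})$.

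Next, the $\zeta$-condition is leveraged to obtain a strict upper bound on the infimum. Full row rank of $\bm{A}$ makes $\bm{A}\bm{x}=\bm{b}$ solvable; for any such $\bm{x}^\#$, \cref{lem:RatioBounds} gives $\mathcal{H}(\bm{x}^\#)=\zeta R(\bm{x}^\#)\le \zeta n^{3/4}$. Rearranging $\zeta<\|\bm{b}\|_2^2/(2(n^{3/4}-1))$ yields $\zeta n^{3/4}<\zeta+\tfrac12\|\bm{b}\|_2^2 = \mathcal{H}(\bm{0})$, hence $\mathcal{H}^\ast\le \zeta n^{3/4}<\mathcal{H}(\bm{0})$, in line with \cref{thm:SolutionExistence}. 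Consequently, any minimizing sequence $\{\bm{x}^k\}$ eventually satisfies $\mathcal{H}(\bm{x}^k)<\mathcal{H}(\bm{0})$, which combined with $R\ge 1$ forces $\|\bm{A}\bm{x}^k-\bm{b}\|_2^2\le\|\bm{b}\|_2^2$. Decomposing $\bm{x}^k=\bm{p}^k+\bm{q}^k$ orthogonally with $\bm{p}^k\in\ker(\bm{A})^\perp$ and $\bm{q}^k\in\ker(\bm{A})$, invertibility of $\bm{A}|_{\ker(\bm{A})^\perp}$ yields $\{\bm{p}^k\}$ bounded.

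The main obstacle is bounding the kernel component $\{\bm{q}^k\}$, since on $\ker(\bm{A})$ the fidelity is invariant and scale-invariance of $R$ provides no coercion. I would argue by contradiction: if $\|\bm{q}^k\|_2\to\infty$, then after extraction $\bm{p}^k\to\bm{p}^\ast$ and $\bm{q}^k/\|\bm{q}^k\|_2\to\bm{w}^\ast\in\ker(\bm{A})$ of unit norm, and $\bm{x}^k/\|\bm{x}^k\|_2\to\bm{w}^\ast$. Scale-invariance and continuity of $R$ on the unit sphere then give $R(\bm{x}^k)\to R(\bm{w}^\ast)$, whence $\mathcal{H}^\ast=\zeta R(\bm{w}^\ast)+\tfrac12\|\bm{A}\bm{p}^\ast-\bm{b}\|_2^2$. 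Comparing with the asymptotic upper bounds $\lim_{t\to\infty}\mathcal{H}(\bm{p}+t\bm{w}^\ast)$ and $\lim_{t\to\infty}\mathcal{H}(\bm{p}^\ast+t\bm{w})$ for arbitrary $\bm{p}\in\ker(\bm{A})^\perp$ and unit $\bm{w}\in\ker(\bm{A})$ pins down $\bm{A}\bm{p}^\ast=\bm{b}$ and forces $\bm{w}^\ast$ to be a minimizer of $R$ on the unit sphere of $\ker(\bm{A})$. Since $\bm{b}\ne\bm{0}$, the unique $\bm{p}^\#\in\ker(\bm{A})^\perp$ with $\bm{A}\bm{p}^\#=\bm{b}$ is nonzero and linearly independent from $\bm{w}^\ast$; one can then exhibit a finite $\bm{y}=\bm{p}^\#+t^\ast\bm{w}^\ast$ with $R(\bm{y})<R(\bm{w}^\ast)$, yielding $\mathcal{H}(\bm{y})<\mathcal{H}^\ast$, a contradiction.

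With $\{\bm{x}^k\}$ bounded, Bolzano--Weierstrass and the l.s.c.\ established earlier produce $\bm{x}^\ast=\lim \bm{x}^{k_j}$ with $\mathcal{H}(\bm{x}^\ast)\le\mathcal{H}^\ast$, so $\arg\min\mathcal{H}\ne\emptyset$. The delicate step is the construction of the finite competitor $\bm{y}$ in the third paragraph; it is there that $\bm{b}\ne\bm{0}$ (forcing $\bm{p}^\#\ne\bm{0}$) and the $\zeta$-condition combine with the geometry of $R$ along lines through $\bm{p}^\#$ to deliver the contradiction.
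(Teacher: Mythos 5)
Your proposal follows the same overall strategy as the paper's proof (the direct method: bound a minimizing sequence, or normalize it and show the limiting direction must lie in $\ker(\bm{A})$, then contradict the $\zeta$-condition), and up to the last step your bookkeeping is in fact more careful than the paper's: you correctly identify that along an unbounded minimizing sequence with $\bm{x}^k/\|\bm{x}^k\|_2\to\bm{w}^\ast\in\ker(\bm{A})$ the fidelity term converges to $\tfrac12\|\bm{A}\bm{p}^\ast-\bm{b}\|_2^2$, which can be $0$, whereas the paper asserts at the corresponding point that it converges to $\tfrac12\|\bm{b}\|_2^2$ and closes the argument against $\mathcal{H}^\ast\le\zeta n^{3/4}<\zeta+\tfrac12\|\bm{b}\|_2^2$. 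The genuine gap is your final step: the claim that a finite competitor $\bm{y}=\bm{p}^\#+t^\ast\bm{w}^\ast$ with $R(\bm{y})<R(\bm{w}^\ast)$ can always be exhibited. You have already forced $\bm{w}^\ast$ to minimize $R$ over the unit sphere of $\ker(\bm{A})$; if that minimal value equals $1$, which by \cref{lem:RatioBounds} is the global minimum of $R$ on $\mathbb{R}^n\setminus\{\bm{0}\}$, then no point anywhere has a strictly smaller ratio, and linear independence of $\bm{p}^\#$ and $\bm{w}^\ast$ gives no leverage because $R(\bm{p}^\#+t\bm{w}^\ast)$ can approach $R(\bm{w}^\ast)$ strictly from above for every finite $t$.

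This failure is not repairable under the stated hypotheses. Take $n=3$, $m=2$, let $\bm{A}$ have rows $\bm{e}_2^\top$ and $\bm{e}_3^\top$ (full row rank), $\bm{b}=(1,1)^\top$, and $\zeta=\tfrac12<\|\bm{b}\|_2^2/\bigl(2(3^{3/4}-1)\bigr)$. Then $\ker(\bm{A})=\mathrm{span}\{\bm{e}_1\}$, $\bm{w}^\ast=\pm\bm{e}_1$ with $R(\bm{w}^\ast)=1$, $\bm{p}^\#=(0,1,1)^\top$, and the solution set of $\bm{A}\bm{x}=\bm{b}$ is $\{(t,1,1)^\top\}$ with $R\bigl((t,1,1)^\top\bigr)=\bigl(|t|^{1/2}+2\bigr)/\bigl(t^2+2\bigr)^{1/4}>1$ for every finite $t$ (since $(|t|^{1/2}+2)^4>t^2+2$) while $R\to1$ as $|t|\to\infty$. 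Hence $\mathcal{H}(\bm{x})>\zeta$ for every $\bm{x}\in\mathbb{R}^3$ (the fidelity term is positive off the affine set, and $R>1$ on it), yet $\mathcal{H}\bigl((t,1,1)^\top\bigr)\to\zeta$, so the infimum of \cref{eq:OptimizatopnObjectUnCon} equals $\zeta$ and is not attained. In other words, the escape-to-infinity case you are trying to exclude can actually occur, so neither your final step nor any substitute for it can succeed without an additional hypothesis (e.g., one ruling out $1$-sparse directions in $\ker(\bm{A})$, or more generally guaranteeing that $\min\{R(\bm{w}):\bm{w}\in\ker(\bm{A}),\ \|\bm{w}\|_2=1\}$ is attained on the affine set $\{\bm{A}\bm{x}=\bm{b}\}$). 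The same obstruction affects the paper's own proof at the step noted above.
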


\begin{proof}
	By \cref{lem:RatioBounds}, we know that $\|\bm{x}\|_{1/2}^{1/2}/\|\bm{x}\|_2^{1/2} \in [1,n^{3/4}]$ for any $\bm{x} \neq \bm{0}$. Moreover, the objective function $\mathcal{H}$ is proper and bounded from below, and hence the infimum $\mathcal{H}^*=\inf_{\bm{x} \in \mathbb{R}^n} \mathcal{H}(\bm{x})$ is finite and $ \mathcal{H}^* \geq \zeta > 0 $. 
	
	Let $\{\bm{x}_k\}$ be a minimizing sequence with $\mathcal{H}(\bm{x}_k)\to \mathcal{H}^*$. If $\{\bm{x}_k\}$ is bounded, then compactness in finite-dimensional spaces ensures the existence of a convergent subsequence, and lower semicontinuity of $\mathcal{H}$ implies that the limit point is a minimizer. Thus, it suffices to exclude the possibility that $\{\bm{x}_k\}$ is unbounded. Suppose, to the contrary, that $\|\bm{x}_k\|_2 \to \infty$. Define the normalized sequence $\bm{\tilde{u}}_k = \bm{x}_k/\|\bm{x}_k\|_2$. Since $\|\bm{\tilde{u}}_k\|_2=1$, compactness of the unit sphere yields a subsequence converging to some $\bm{\tilde{u}}$ with $\|\bm{\tilde{u}}\|_2=1$. If $\bm{\tilde{u}} \notin \text{ker}(\bm{A})$, then $\|\bm{A\tilde{u}}\|_2>0$. By continuity, we have $\|\bm{A \tilde{u}}_k\|_2 \to \|\bm{A \tilde{u}}\|_2$. Therefore, there exist a constant $c_1 > 0$ and an index $K$ such that for all $k \geq K$, it holds that $|\bm{A \tilde{u}}_k|_2 \geq c_1$. Consequently, for these $k$, we obtain
	\begin{equation*}
		\|\bm{Ax}_k - b\|_2 \geq \|\bm{x}_k\|_2 \|\bm{A\tilde{u}}_k\|_2 - \|\bm{b}\|_2 \geq c_1 \|\bm{x}_k\|_2 - \|\bm{b}\|_2\to \infty
	\end{equation*}
	as $\|\bm{x}_k\|_2 \to \infty$, contradicting the boundedness of $\mathcal{H}^*$ since $\mathcal{H}(\bm{x}_k) \to \mathcal{H}^*<\infty$. This implies that $\bm{\tilde{u}} \in \text{ker}(\bm{A})$, then $\|\bm{A\tilde{u}}\|_2=0$. It follows that $\tfrac{1}{2}\|\bm{A}\bm{x}_k-\bm{b}\|_2^2 \to \tfrac{1}{2}\|\bm{b}\|_2^2$, while the ratio $\|\bm{x}_k\|_{1/2}^{1/2}/\|\bm{x}_k\|_2^{1/2}$ converges to $\|\bm{\tilde{u}}\|_{1/2}^{1/2}$. Moreover, for any unit vector $\bm{\tilde{u}}$, it holds that $\|\bm{\tilde{u}}\|_{1/2}\ge\|\bm{\tilde{u}}\|_1 \ge \|\bm{\tilde{u}}\|_2 = 1$. Thus, we obtain $\liminf_{k\to\infty}\mathcal{H}(\bm{x}_k) \ge \zeta + \tfrac{1}{2}\|\bm{b}\|_2^2$. In addition, since $\bm{A}$ has full row rank, the system $\bm{Ax}=\bm{b}$ admits a solution $\check{\bm{x}} = \bm{A}^\top(\bm{AA}^\top)^{-1}\bm{b}$, for which $\mathcal{H}(\check{\bm{x}}) = \zeta \frac{\|\check{\bm{x}}\|_{1/2}^{1/2}}{\|\check{\bm{x}}\|_2^{1/2}} \leq \zeta n^{3/4}$, and hence $ \mathcal{H}^* \leq \zeta n^{3/4}$. By combining the two bounds, we have $\zeta n^{3/4} \ge \mathcal{H}^* \ge \zeta + \tfrac12\|\bm{b}\|_2^2$, which implies $\zeta(n^{3/4} - 1)\ge \tfrac{1}{2}\|\bm{b}\|_2^2$, contradicting the assumed condition on $\zeta$. Thus, $\{\bm{x}_k\}$ must be bounded, and the existence of a convergent subsequence yields a global minimizer of \cref{eq:OptimizatopnObjectUnCon}, which means the optimal solution set $\arg \min_{\bm{x} \in \mathbb{R}^n} \mathcal{H}(\bm{x})$ is non-empty.
\end{proof}

\subsection{Subsequential Convergence}
In this subsection, we discuss the convergence of subsequences. To this end, we first show two lemmas.
\begin{lemma}[Sufficient Descent]\label{lem:SufficientDescent}
	Let $\{(\bm{x}_k, \bm{y}_k, \bm{\lambda}_k)\}$ denote the sequence produced by the iterative scheme \cref{eq:admm_updates} and $\mathcal{L}_\rho(\bm{x}_k, \bm{y}_k, \bm{\lambda}_k)=\mathcal{L}_\rho^{k}$. For the augmented Lagrangian function $\mathcal{L}_\rho^k$, it then follows that $\mathcal{L}_\rho^{k+1} \geq \mathcal{L}_\rho^k +  \left( \frac{\lambda_{\min}(\bm{A}^\top \bm{A}) + \rho}{2} - \frac{L_g^2}{\rho} \right) \|\bm{y}_{k+1} - \bm{y}_k\|^2,$
	where $L_g = \lambda_{\max}(\bm{A}^\top \bm{A})$ denotes the largest eigenvalue of $\bm{A}^\top \bm{A}$. If $ \rho > \frac{-\lambda_{\min}(\bm{A}^\top \bm{A})+\sqrt{\left(\lambda_{\min}(\bm{A}^\top \bm{A})\right)^2 + 8L_g^2}}{2}$, then the sequence $ \{\mathcal{L}_\rho\}$ is nonincreasing.
\end{lemma}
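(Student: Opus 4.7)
The plan is to monitor how $\mathcal{L}_\rho$ changes through the three block updates of \eqref{eq:admm_updates} by telescoping the one-step increment as
\begin{align*}
\mathcal{L}_\rho^{k+1} - \mathcal{L}_\rho^k
&= \bigl[\mathcal{L}_\rho(\bm{x}_{k+1},\bm{y}_k,\bm{\lambda}_k) - \mathcal{L}_\rho^k\bigr]
 + \bigl[\mathcal{L}_\rho(\bm{x}_{k+1},\bm{y}_{k+1},\bm{\lambda}_k) - \mathcal{L}_\rho(\bm{x}_{k+1},\bm{y}_k,\bm{\lambda}_k)\bigr] \\
&\quad + \bigl[\mathcal{L}_\rho^{k+1} - \mathcal{L}_\rho(\bm{x}_{k+1},\bm{y}_{k+1},\bm{\lambda}_k)\bigr]
\end{align*}
and bounding each bracket separately. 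The first term captures the $\bm{x}$-descent, the second the $\bm{y}$-descent, and the third the ascent produced by the multiplier update.

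For the $\bm{x}$-bracket the contribution is nonpositive since $\bm{x}_{k+1}$ minimizes $\mathcal{L}_\rho(\cdot,\bm{y}_k,\bm{\lambda}_k)$ by \eqref{eq:admm_x}. For the $\bm{y}$-bracket, I would note that $\bm{y}\mapsto \mathcal{L}_\rho(\bm{x}_{k+1},\bm{y},\bm{\lambda}_k)$ equals $g(\bm{y})$ plus a quadratic in $\bm{y}$, so its Hessian is $\bm{A}^\top\bm{A}+\rho\bm{I}$ and it is strongly convex with modulus $\mu:=\lambda_{\min}(\bm{A}^\top\bm{A})+\rho$; the standard second-order lower bound at the minimizer $\bm{y}_{k+1}$ then yields the bracket bound $-\tfrac{\mu}{2}\|\bm{y}_{k+1}-\bm{y}_k\|_2^2$. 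For the $\bm{\lambda}$-bracket, a direct substitution using \eqref{eq:admm_z} gives $\mathcal{L}_\rho^{k+1} - \mathcal{L}_\rho(\bm{x}_{k+1},\bm{y}_{k+1},\bm{\lambda}_k) = \tfrac{1}{\rho}\|\bm{\lambda}_{k+1}-\bm{\lambda}_k\|_2^2$ because $\bm{x}_{k+1}-\bm{y}_{k+1}=(\bm{\lambda}_{k+1}-\bm{\lambda}_k)/\rho$.

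The decisive step is absorbing $\|\bm{\lambda}_{k+1}-\bm{\lambda}_k\|_2^2$ back into $\|\bm{y}_{k+1}-\bm{y}_k\|_2^2$, which is where the threshold on $\rho$ ultimately enters. For this I would use the first-order optimality condition of \eqref{eq:admm_y}, namely $\nabla g(\bm{y}_{k+1}) = \bm{\lambda}_k + \rho(\bm{x}_{k+1}-\bm{y}_{k+1}) = \bm{\lambda}_{k+1}$, so that $\bm{\lambda}_{k+1} = \bm{A}^\top(\bm{A}\bm{y}_{k+1}-\bm{b})$; applied at two consecutive iterations this yields $\bm{\lambda}_{k+1}-\bm{\lambda}_k = \bm{A}^\top\bm{A}(\bm{y}_{k+1}-\bm{y}_k)$ and therefore $\|\bm{\lambda}_{k+1}-\bm{\lambda}_k\|_2^2 \le L_g^2\|\bm{y}_{k+1}-\bm{y}_k\|_2^2$ with $L_g=\lambda_{\max}(\bm{A}^\top\bm{A})$. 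Assembling the three bracket estimates produces the one-step inequality stated in the lemma.

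Finally, the sequence $\{\mathcal{L}_\rho^k\}$ is nonincreasing once the effective coefficient $\tfrac{\mu}{2}-\tfrac{L_g^2}{\rho}$ is positive; rearranging gives the quadratic inequality $\rho^2+\lambda_{\min}(\bm{A}^\top\bm{A})\,\rho - 2L_g^2>0$, whose positive root is precisely the threshold on $\rho$ prescribed in the statement. The main (and essentially only) obstacle is the identification $\bm{\lambda}_{k+1}=\nabla g(\bm{y}_{k+1})$: once this link is in hand the multiplier increment is Lipschitz-controlled by the $\bm{y}$-increment, and everything else reduces to strong-convexity bookkeeping. The nonconvex ratio term $\zeta\|\bm{x}\|_{1/2}^{1/2}/\|\bm{x}\|_2^{1/2}$ never surfaces in the $\bm{y}$- or $\bm{\lambda}$-analysis; it is absorbed entirely by the non-positive $\bm{x}$-descent bracket, which is why the standard strongly-convex ADMM descent machinery still applies despite the objective being nonconvex and nonsmooth.
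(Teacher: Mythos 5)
Your proof is correct and follows essentially the same route as the paper, which itself omits the argument and defers to \cite[Lemma~2.1]{wang2023variant}: telescope the increment over the three block updates, use global optimality of the $\bm{x}$-update for the first bracket, the strong convexity modulus $\lambda_{\min}(\bm{A}^\top\bm{A})+\rho$ of the $\bm{y}$-subproblem for the second, and the identity $\bm{\lambda}_{k+1}=\nabla g(\bm{y}_{k+1})$ (hence $\|\bm{\lambda}_{k+1}-\bm{\lambda}_k\|_2\le L_g\|\bm{y}_{k+1}-\bm{y}_k\|_2$) to absorb the dual ascent term $\tfrac{1}{\rho}\|\bm{\lambda}_{k+1}-\bm{\lambda}_k\|_2^2$. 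The only discrepancy is that what you (correctly) derive is the descent form $\mathcal{L}_\rho^{k+1}\le\mathcal{L}_\rho^{k}-\bigl(\tfrac{\lambda_{\min}(\bm{A}^\top\bm{A})+\rho}{2}-\tfrac{L_g^2}{\rho}\bigr)\|\bm{y}_{k+1}-\bm{y}_k\|_2^2$, which is the version actually invoked later in the paper; the inequality as printed in the lemma statement has its direction reversed, evidently a typo.
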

\begin{proof}
	This proof is similar to that in \cite[Lemma 2.1]{wang2023variant}, and hence omitted here.
\end{proof}

\begin{lemma}\label{lem:LsubgradientBounded}
	Let $\{\left(\bm{x}_k, \bm{y}_k, \bm{\lambda}_k\right)\}$ be the sequence generated by \cref{alg:admm_full}, then there exists a constant $C_{\rho} > 0$ such that $\mathrm{dist}^2(\mathbf{0}, \partial \mathcal{L}_{\rho}^{k+1}) \leq C_{\rho} \|\bm{y}_{k+1} - \bm{y}_k\|_2^2.$
\end{lemma}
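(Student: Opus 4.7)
The plan is to exhibit a concrete element of $\partial\mathcal{L}_\rho^{k+1}$ whose norm is controlled by $\|\bm{y}_{k+1}-\bm{y}_k\|_2$. Since $\mathcal{L}_\rho$ is $C^1$ in $\bm{y}$ and linear in $\bm{\lambda}$, and the only nonsmoothness enters through $\zeta\|\bm{x}\|_{1/2}^{1/2}/\|\bm{x}\|_2^{1/2}$ in the $\bm{x}$ variable, the partial-subdifferential calculus gives
\begin{align*}
\partial\mathcal{L}_\rho^{k+1} \subseteq \partial_{\bm{x}}\mathcal{L}_\rho^{k+1} \times \{\nabla_{\bm{y}}\mathcal{L}_\rho^{k+1}\} \times \{\nabla_{\bm{\lambda}}\mathcal{L}_\rho^{k+1}\}.
\end{align*}
It therefore suffices to produce a single selection in $\partial_{\bm{x}}\mathcal{L}_\rho^{k+1}$, compute the other two gradients exactly, bound each block by a multiple of $\|\bm{y}_{k+1}-\bm{y}_k\|_2$, and then sum the squared bounds.

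First I would invoke the first-order optimality condition for the $\bm{x}$-subproblem \cref{eq:admm_x},
\begin{align*}
\bm{0}\in \zeta\,\partial\!\left(\frac{\|\cdot\|_{1/2}^{1/2}}{\|\cdot\|_2^{1/2}}\right)\!(\bm{x}_{k+1}) + \rho(\bm{x}_{k+1}-\bm{y}_k) + \bm{\lambda}_k,
\end{align*}
so that $-\rho(\bm{x}_{k+1}-\bm{y}_k)-\bm{\lambda}_k$ is an element of $\zeta\,\partial(\|\cdot\|_{1/2}^{1/2}/\|\cdot\|_2^{1/2})(\bm{x}_{k+1})$. Substituting into $\partial_{\bm{x}}\mathcal{L}_\rho^{k+1}$ and cancelling the common $\rho\bm{x}_{k+1}$ yields the selection
\begin{align*}
\bm{\xi}_x := \rho(\bm{y}_k-\bm{y}_{k+1}) + (\bm{\lambda}_{k+1}-\bm{\lambda}_k) \in \partial_{\bm{x}}\mathcal{L}_\rho^{k+1}.
\end{align*}
Next, the optimality condition for the $\bm{y}$-subproblem \cref{eq:admm_y} combined with the multiplier update \cref{eq:admm_z} gives the identity $\nabla g(\bm{y}_{k+1})=\bm{\lambda}_{k+1}$, from which a direct computation produces
\begin{align*}
\nabla_{\bm{y}}\mathcal{L}_\rho^{k+1} = -(\bm{\lambda}_{k+1}-\bm{\lambda}_k), \qquad \nabla_{\bm{\lambda}}\mathcal{L}_\rho^{k+1} = \bm{x}_{k+1}-\bm{y}_{k+1} = \tfrac{1}{\rho}(\bm{\lambda}_{k+1}-\bm{\lambda}_k).
\end{align*}

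The closing ingredient is the key identity
\begin{align*}
\bm{\lambda}_{k+1}-\bm{\lambda}_k = \nabla g(\bm{y}_{k+1}) - \nabla g(\bm{y}_k) = \bm{A}^\top\bm{A}(\bm{y}_{k+1}-\bm{y}_k),
\end{align*}
valid from the second iteration on, which implies $\|\bm{\lambda}_{k+1}-\bm{\lambda}_k\|_2 \le L_g\|\bm{y}_{k+1}-\bm{y}_k\|_2$ with $L_g=\lambda_{\max}(\bm{A}^\top\bm{A})$. Adding the squared norms of the three blocks then gives
\begin{align*}
\mathrm{dist}^2(\bm{0},\partial\mathcal{L}_\rho^{k+1}) \le \left[(\rho+L_g)^2 + L_g^2 + L_g^2/\rho^2\right]\|\bm{y}_{k+1}-\bm{y}_k\|_2^2,
\end{align*}
establishing the bound with $C_\rho := (\rho+L_g)^2 + L_g^2 + L_g^2/\rho^2$. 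The only non-routine step is the identity $\nabla g(\bm{y}_{k+1})=\bm{\lambda}_{k+1}$: this is what allows the multiplier difference to be rewritten through the Lipschitz-continuous gradient of $g$, so that the seemingly problematic nonsmooth $\bm{x}$-block is controlled solely by $\|\bm{y}_{k+1}-\bm{y}_k\|_2$. Everything else is straightforward bookkeeping of the KKT conditions of the two subproblems.
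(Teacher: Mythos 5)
Your proposal is correct and follows essentially the same route as the paper's proof: the same subgradient selection $\bigl(\rho(\bm{y}_k-\bm{y}_{k+1})+(\bm{\lambda}_{k+1}-\bm{\lambda}_k),\,\bm{\lambda}_k-\bm{\lambda}_{k+1},\,(\bm{\lambda}_{k+1}-\bm{\lambda}_k)/\rho\bigr)$, the same key identity $\bm{\lambda}_{k+1}=\nabla g(\bm{y}_{k+1})$ yielding $\|\bm{\lambda}_{k+1}-\bm{\lambda}_k\|_2\le L_g\|\bm{y}_{k+1}-\bm{y}_k\|_2$, and the same constant $C_\rho=(\rho+L_g)^2+L_g^2(1+1/\rho^2)$. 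Your caveat that the multiplier identity holds from the second iteration on is a small point of care that the paper glosses over.
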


\begin{proof}
	According to the augmented Lagrangian function \cref{eq:AugmentedLagrangian}, we have
	\begin{equation}\label{eq:DervAugmentedLagrangian}
		\begin{aligned}
			\frac{\partial \mathcal{L}_{\rho}^{k+1}}{\partial \bm{x}_{k+1}} 
			&= \zeta \partial \left( \frac{\|\bm{x}_{k+1}\|_{1/2}^{1/2}}{\|\bm{x}_{k+1}\|_2^{1/2}} \right) + \bm{\lambda}_{k+1} + \rho (\bm{x}_{k+1} - \bm{y}_{k+1}), \\
			\frac{\partial \mathcal{L}_{\rho}^{k+1}}{\partial \bm{y}_{k+1}} 
			&= \nabla g(\bm{y}_{k+1}) - \bm{\lambda}_{k+1} - \rho (\bm{x}_{k+1} - \bm{y}_{k+1}),\;\;
			\frac{\partial \mathcal{L}_{\rho}^{k+1}}{\partial \bm{\lambda}_{k+1}} 
			= \bm{x}_{k+1} - \bm{y}_{k+1},
		\end{aligned}
	\end{equation} 
	and by the optimality condition of \cref{eq:admm_updates}, we know that
	\begin{equation}
		\bm{0} \in \zeta \partial \left( \frac{\|\bm{x}_{k+1}\|_{1/2}^{1/2}}{\|\bm{x}_{k+1}\|_2^{1/2}} \right) + \rho \left( \bm{x}_{k+1} - \bm{y}_k + \frac{\bm{\lambda}_k}{\rho} \right)
	\end{equation}
	and $\nabla g(\bm{x}_{k+1}) - \rho \left( \bm{x}_{k+1} - \bm{y}_{k+1} + \frac{\bm{\lambda}_k}{\rho} \right) = \bm{0}.$ Let $\bm{\xi}^{k+1} = (\bm{\xi}_1^{k+1}, \bm{\xi}_2^{k+1}, \bm{\xi}_3^{k+1})^\top$. Here, $\bm{\xi}_1^{k+1} = \rho (\bm{y}_k - \bm{y}_{k+1}) + (\bm{\lambda}_{k+1} - \bm{\lambda}_k), \bm{\xi}_2^{k+1} = \bm{\lambda}^k - \bm{\lambda}^{k+1}$ and $\bm{\xi}_3^{k+1} = \bm{x}_{k+1} - \bm{y}_{k+1} = (\bm{\lambda}_{k+1} - \bm{\lambda}_k)/\rho.$ It is easy to get that $\bm{\xi}^{k+1} \in \partial \mathcal{L}_{\rho}{\left(\bm{x}_{k+1}, \bm{y}_{k+1}, \bm{\lambda}_{k+1}\right)}$. Moreover, $\|\bm{\xi}^{k+1}\|_2^2 = \|\bm{\xi}_1^{k+1}\|_2^2 + \|\bm{\xi}_2^{k+1}\|_2^2 + \|\bm{\xi}_3^{k+1}\|_2^2$, one has
	\begin{equation*}
		\begin{aligned}
			\|\rho (\bm{y}_k - \bm{y}_{k+1}) &+ (\bm{\lambda}_{k+1} - \bm{\lambda}_k)\|_2^2 + \left(1 + \frac{1}{\rho^2}\right) \|\bm{\lambda}_{k+1} - \bm{\lambda}_k\|_2^2 \\
			&\overset{a}{\leq} \left(\left(\rho+L_g\right)\|\bm{y}_{k+1} - \bm{y}_k\|_2\right)^2 + \left(1 + \frac{1}{\rho^2}\right) \|\bm{\lambda}_{k+1} - \bm{\lambda}_k\|_2^2\\ 
			&\overset{b}{\leq} \left( \left(\rho+L_g\right)^2 + L_g^2 \left(1 + \frac{1}{\rho^2}\right) \right) \|\bm{y}_{k+1} - \bm{y}_k\|_2^2 = C_{\rho} \|\bm{y}_{k+1} - \bm{y}_k\|_2^2,
		\end{aligned}
	\end{equation*}
	where $C_{\rho} := \left(\rho+L_g\right)^2 + L_g^2 \left(1 + \frac{1}{\rho^2}\right)$, $\overset{a}{\leq}$ is obtained by the triangle inequality, and $\overset{b}{\leq}$ is obtained by $\|\bm{\lambda}_{k+1} - \bm{\lambda}_{k}\|_2 = \|\nabla g(\bm{y}_{k+1}) - \nabla g(\bm{y}_k)\|_2 \leq L_g \|\bm{y}_{k+1} - \bm{y}_k\|_2.$
\end{proof}

\begin{theorem}\label{thm:admm_convergence}
	Let $\{(\bm{x}_k, \bm{y}_k, \bm{\lambda}_k)\}$ denote the sequence generated by the ADMM algorithm with iterative updates as defined in \cref{eq:admm_updates}. Suppose that $\{\bm{x}_k\}$ is bounded and the penalty parameter $\rho$ satisfies $\rho > \frac{-\lambda_{\min}\left(\bm{A}^{\top}\bm{A}\right) + \sqrt{\left(\lambda_{\min}\left(\bm{A}^{\top}\bm{A}\right)\right)^2 + 8L_g^2}}{2}.$ Then, the following statements hold:
	\begin{enumerate}
		\item[\textup{(i)}] \textbf{Boundedness:} The primal variable sequence $\{\bm{y}_k\}$ and the dual multiplier sequence $\{\bm{\lambda}_k\}$ are both bounded in $\mathbb{R}^n$.
		\item[\textup{(ii)}] \textbf{Existence:} The composite sequence $\{(\bm{x}_k, \bm{y}_k, \bm{\lambda}_k)\}$ admits at least one accumulation point in $\mathbb{R}^n \times \mathbb{R}^n \times \mathbb{R}^n$.
		\item[\textup{(iii)}] \textbf{Asymptotic:} The successive differences converge to zero asymptotically: $ \lim\limits_{k \to \infty} \|\bm{x}_k - \bm{x}_{k+1}\|_2 = 0, \lim\limits_{k \to \infty} \|\bm{y}_k - \bm{y}_{k+1}\|_2 = 0,$  and $\lim\limits_{k \to \infty} \|\bm{\lambda}_k - \bm{\lambda}_{k+1}\|_2 = 0.$
		\item[\textup{(iv)}] Any accumulation point $(\bm{x}^*, \bm{y}^*, \bm{\lambda}^*)$ of the composite sequence $\{(\bm{x}_k, \bm{y}_k, \bm{\lambda}_k)\}$ satisfies the first-order necessary conditions for stationarity of the augmented Lagrangian function $\mathcal{L}_\rho$, and $\bm{x}^*$ is a stationary point of the problem \cref{eq:OptimizatopnObjectUnCon}.
	\end{enumerate}
\end{theorem}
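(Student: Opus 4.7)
My plan is to combine the descent guarantee of \cref{lem:SufficientDescent} with the subgradient bound of \cref{lem:LsubgradientBounded}, and then invoke closedness of the limiting subdifferential to identify every accumulation point as a stationary pair. The starting observation is that the optimality condition for the $\bm{y}$-subproblem together with the multiplier update in \cref{eq:admm_updates} yields $\bm{\lambda}_{k+1} = \nabla g(\bm{y}_{k+1}) = \bm{A}^{\top}(\bm{A}\bm{y}_{k+1} - \bm{b})$, so that $\|\bm{\lambda}_{k+1} - \bm{\lambda}_k\|_2 \le L_g \|\bm{y}_{k+1} - \bm{y}_k\|_2$ and the multipliers inherit boundedness from $\{\bm{y}_k\}$.

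For part (i), I would substitute $\bm{\lambda}_k = \nabla g(\bm{y}_k)$ into the augmented Lagrangian and use the quadratic identity $g(\bm{x}_k) = g(\bm{y}_k) + \langle \nabla g(\bm{y}_k), \bm{x}_k - \bm{y}_k\rangle + \tfrac{1}{2}\|\bm{A}(\bm{x}_k - \bm{y}_k)\|_2^2$ to rewrite $\mathcal{L}_{\rho}^{k}$ as $\zeta \|\bm{x}_k\|_{1/2}^{1/2}/\|\bm{x}_k\|_2^{1/2} + g(\bm{x}_k) + \tfrac{1}{2}(\bm{x}_k - \bm{y}_k)^{\top}(\rho\bm{I} - \bm{A}^{\top}\bm{A})(\bm{x}_k - \bm{y}_k)$. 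The assumption on $\rho$ implies $\rho > L_g$, so $\rho\bm{I} - \bm{A}^{\top}\bm{A}$ is positive definite. Combined with boundedness of $\{\bm{x}_k\}$, the bounds $1 \le \|\bm{x}_k\|_{1/2}^{1/2}/\|\bm{x}_k\|_2^{1/2} \le n^{3/4}$ from \cref{lem:RatioBounds}, and the upper bound $\mathcal{L}_{\rho}^{k} \le \mathcal{L}_{\rho}^{0}$ from \cref{lem:SufficientDescent}, this forces $\{\bm{x}_k - \bm{y}_k\}$ to be bounded, hence both $\{\bm{y}_k\}$ and $\{\bm{\lambda}_k\}$ are bounded. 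Part (ii) then follows from Bolzano--Weierstrass. For part (iii), telescoping the descent inequality gives $\sum_{k \ge 0} \|\bm{y}_{k+1} - \bm{y}_k\|_2^2 < \infty$, so $\|\bm{y}_{k+1} - \bm{y}_k\|_2 \to 0$; the Lipschitz estimate transfers this to $\|\bm{\lambda}_{k+1} - \bm{\lambda}_k\|_2 \to 0$; the dual update $\bm{\lambda}_{k+1} - \bm{\lambda}_k = \rho(\bm{x}_{k+1} - \bm{y}_{k+1})$ yields $\|\bm{x}_{k+1} - \bm{y}_{k+1}\|_2 \to 0$; and the identity $\bm{x}_{k+1} - \bm{x}_k = (\bm{x}_{k+1} - \bm{y}_{k+1}) - (\bm{x}_k - \bm{y}_k) + (\bm{y}_{k+1} - \bm{y}_k)$ produces $\|\bm{x}_{k+1} - \bm{x}_k\|_2 \to 0$.

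For part (iv), let $(\bm{x}^{\ast}, \bm{y}^{\ast}, \bm{\lambda}^{\ast})$ be the limit of a convergent subsequence $(\bm{x}_{k_j}, \bm{y}_{k_j}, \bm{\lambda}_{k_j})$. Part (iii) forces $\bm{x}^{\ast} = \bm{y}^{\ast}$, and continuity of $\nabla g$ gives $\bm{\lambda}^{\ast} = \nabla g(\bm{x}^{\ast})$. \cref{lem:LsubgradientBounded} delivers $\bm{\xi}^{k+1} \in \partial \mathcal{L}_{\rho}^{k+1}$ with $\bm{\xi}^{k+1} \to \bm{0}$. Assuming $\mathcal{L}_{\rho}^{k_j} \to \mathcal{L}_{\rho}(\bm{x}^{\ast}, \bm{y}^{\ast}, \bm{\lambda}^{\ast})$, closedness of the limiting subdifferential yields $\bm{0} \in \partial \mathcal{L}_{\rho}(\bm{x}^{\ast}, \bm{y}^{\ast}, \bm{\lambda}^{\ast})$; unpacking the block components with $\bm{x}^{\ast} = \bm{y}^{\ast}$ and $\bm{\lambda}^{\ast} = \nabla g(\bm{x}^{\ast})$ collapses to $\bm{0} \in \zeta\,\partial(\|\cdot\|_{1/2}^{1/2}/\|\cdot\|_2^{1/2})(\bm{x}^{\ast}) + \nabla g(\bm{x}^{\ast})$, which is precisely the stationarity condition $\bm{0} \in \partial \mathcal{H}(\bm{x}^{\ast})$ for \cref{eq:OptimizatopnObjectUnCon}.

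The main obstacle will be verifying the function-value convergence $\mathcal{L}_{\rho}^{k_j} \to \mathcal{L}_{\rho}(\bm{x}^{\ast}, \bm{y}^{\ast}, \bm{\lambda}^{\ast})$ used to close the limiting subdifferential. All smooth blocks (the quadratic $g$ and the linear and quadratic coupling terms) pass to the limit by continuity, so the difficulty reduces to the ratio $\|\bm{x}\|_{1/2}^{1/2}/\|\bm{x}\|_2^{1/2}$ at $\bm{x}^{\ast}$. When $\bm{x}^{\ast} \neq \bm{0}$ the ratio is continuous on a neighborhood of $\bm{x}^{\ast}$ and the argument is routine; when $\bm{x}^{\ast} = \bm{0}$, the convention $\|\bm{0}\|_{1/2}/\|\bm{0}\|_2 = 1$ renders the ratio only lower semicontinuous, so I would extract a further subsequence along which the ratio converges to some $\ell \in [1, n^{3/4}]$ via \cref{lem:RatioBounds}, and then use the monotone convergence of $\{\mathcal{L}_{\rho}^{k}\}$ together with the already established limits of the smooth blocks to pin down $\ell$, thereby recovering the closedness argument.
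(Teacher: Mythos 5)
Your proposal is correct and follows essentially the same route as the paper: sufficient descent (\cref{lem:SufficientDescent}) for the upper bound and telescoping, the exact quadratic structure of $g$ with $\bm{\lambda}_k=\nabla g(\bm{y}_k)$ for the lower bound and boundedness, and the subgradient bound (\cref{lem:LsubgradientBounded}) plus closedness of the limiting subdifferential for part (iv). If anything you are more careful than the paper at two points: your exact identity $\mathcal{L}_\rho^k=\zeta\|\bm{x}_k\|_{1/2}^{1/2}/\|\bm{x}_k\|_2^{1/2}+g(\bm{x}_k)+\tfrac12(\bm{x}_k-\bm{y}_k)^{\top}(\rho\bm{I}-\bm{A}^{\top}\bm{A})(\bm{x}_k-\bm{y}_k)$ fixes the sign slip in the paper's inequality \cref{eq:lowerbounded}, and you explicitly flag the function-value convergence $\mathcal{L}_\rho^{k_j}\to\mathcal{L}_\rho(\bm{x}^*,\bm{y}^*,\bm{\lambda}^*)$ needed to invoke closedness of $\partial\mathcal{L}_\rho$ when $\bm{x}^*=\bm{0}$ (where the ratio is only lower semicontinuous), a step the paper passes over silently.
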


\begin{proof}
	(i) We first show that the sequences $\{\bm{y}_k\}$ and $\{\bm{\lambda}_k\}$ are bounded. By \cref{lem:SufficientDescent}, if $ \rho > \frac{-\lambda_{\min}\left(\bm{A}^{\top}\bm{A}\right)+\sqrt{\left(\lambda_{\min}\left(\bm{A}^{\top}\bm{A}\right)\right)^2 + 8L_g^2}}{2}$, then $\{\mathcal{L}_{\rho}(\bm{x}_k, \bm{y}_k, \bm{\lambda}_k)\}$ is monotonically decreasing . So we have $\mathcal{L}_{\rho}(\bm{x}_k, \bm{y}_k, \bm{\lambda}_k) \leq \mathcal{L}_{\rho}(\bm{x}_0, \bm{y}_0, \bm{\lambda}_0),$ which shows that the sequence $\{\mathcal{L}_{\rho}(\bm{x}_k, \bm{y}_k, \bm{\lambda}_k)\}$ is upper bounded. On the other hand, by setting $ \bm{y}_{k+1}=\bm{x}_{k}$, we have 
	\begin{equation}\label{eq:lowerbounded}
		\begin{aligned}
			g(\bm{y}_k) - g(\bm{x}_k) & - \langle \nabla g(\bm{y}_k), \bm{y}_k - \bm{x}_k \rangle \geq \frac{\lambda_{\min}\left(\bm{A}^{\top}\bm{A}\right)}{2}\|\bm{y}_k - \bm{x}_{k} \|_2^2.
		\end{aligned}
	\end{equation}
	Combining \cref{eq:admm_updates} and \cref{eq:lowerbounded}, one has
	\begin{equation*}
		\begin{aligned}
			\mathcal{L}_{\rho}\left(\bm{x}_k, \bm{y}_k, \bm{\lambda}_k \right) \geq \zeta \frac{\|\bm{x}_k\|_{1/2}^{1/2}}{\|\bm{x}_k\|_{2}^{1/2}} + g(\bm{x}_k) + \frac{\lambda_{\min}\left(\bm{A}^{\top}\bm{A}\right)+\rho}{2}\|\bm{y}_k - \bm{x}_{k} \|_2^2 .
		\end{aligned}
	\end{equation*}
	It can be deduced that, whenever $\rho > \frac{\lambda_{\min}\left(\bm{A}^{\top}\bm{A}\right) + \sqrt{\left(\lambda_{\min}\left(\bm{A}^{\top}\bm{A}\right)\right)^2 + 8L_g^2}}{2},$ the augmented Lagrangian function satisfies $\mathcal{L}_{\rho}(\bm{x}_k, \bm{y}_k, \bm{\lambda}_k) \geq 0,$ which implies that the sequence $\{\mathcal{L}_{\rho}(\bm{x}_k, \bm{y}_k, \bm{\lambda}_k)\}$ remains bounded. Moreover, if the iterate sequence $\{\bm{x}_k\}$ is bounded, then the term
	$\|\bm{x}^k - \bm{y}_k\|_2^2$ is bounded, which directly ensures that $\{\bm{y}_k\}$ is bounded as well. Finally, invoking the multiplier update rule $\bm{\lambda}_k = \nabla g(\bm{y}_k) = \bm{A}^{\top}(\bm{A}\bm{y}_k - \bm{b})$, it follows that $\|\bm{\lambda}_k\|_2^2 = \|\nabla \Phi(\bm{y}_k) - \nabla g(\bm{0}) + \nabla g(\bm{0})\|_2^2 \leq 2\left(L_g^2 \|\bm{y}_k\|_2^2 + \lambda_{\max}(\bm{A}^\top \bm{A}) \|\bm{b}\|_2^2\right),$	which shows that $\{\bm{\lambda}_k\}$ is bounded.
	
	(ii) It follows from (i) that the composite sequence $\{\left(\bm{x}_k, \bm{y}_k, \bm{\lambda}_k\right)\}$ is bounded and hence it has at least one accumulation point.
	
	(iii) Summing the sufficient descent inequality in \cref{lem:SufficientDescent} over $ j = 0 $ to $ k $, we obtain $\mathcal{L}_{\rho}^{k+1} \leq \mathcal{L}_{\rho}^{0} - \sum_{j=0}^k \left( \frac{\lambda_{\min}(\bm{A}^\top \bm{A}) + \rho}{2} - \frac{L_g^2}{\rho} \right) \|\bm{y}_{j+1} - \bm{y}_j\|_2^2.$
	Taking the limit as $ k \to \infty $, and using the nonnegativity of the augmented Lagrangian function $ \mathcal{L}_{\rho} $ established earlier, the left-hand side remains bounded from below. Consequently, the series $\sum_{j=0}^\infty \left( \frac{\lambda_{\min}(\bm{A}^\top \bm{A}) + \rho}{2} - \frac{L_g^2}{\rho} \right) \|\bm{y}_{j+1} - \bm{y}_j\|_2^2$ converges, which, under the given condition on $ \rho $, implies that $ \|\bm{y}_{j+1} - \bm{y}_j\|_2^2 \to 0 $ as $ j \to \infty $. It follows from the Lipschitz continuity of $ \nabla g $ that $ \|\bm{\lambda}_{k+1} - \bm{\lambda}_k\|_2 \to 0 $. Furthermore, from the update rule \cref{eq:admm_z}, we have $\bm{x}_{k} - \bm{x}_{k+1} = \rho^{-1}(\bm{\lambda}_{k+1} - \bm{\lambda}_k) - \rho^{-1}(\bm{\lambda}_k - \bm{\lambda}_{k-1}) + (\bm{y}_{k+1} - \bm{y}_k),$ and thus, as each term on the right-hand side tends to zero in norm, we conclude that $ \|\bm{x}_{k+1} - \bm{x}_k\|_2^2 \to 0 $.
	
	(iv) From part (i), the composite sequence $ \{(\bm{x}_k, \bm{y}_k, \bm{\lambda}_k)\} $ is bounded. By the Bolzano--Weierstrass theorem, there exists a subsequence $ \{(\bm{x}_{k_i}, \bm{y}_{k_i}, \bm{\lambda}_{k_i})\} $ converging to some limit point $ (\bm{x}^*, \bm{y}^*, \bm{\lambda}^*) $ as $ i \to \infty $. As shown in (iii), the shifted subsequence $ (\bm{x}_{k_i+1}, \bm{y}_{k_i+1}, \bm{\lambda}_{k_i+1}) $ converges to the same limit. Invoking \cref{lem:LsubgradientBounded}, we deduce that $ \mathbf{0} \in \partial \mathcal{L}_{\rho}(\bm{x}^*, \bm{y}^*, \bm{\lambda}^*) $, confirming that $ (\bm{x}^*, \bm{y}^*, \bm{\lambda}^*) $ is a stationary point of the augmented Lagrangian function \cref{eq:innerADMM}. Finally, from the stationarity conditions derived in \cref{eq:DervAugmentedLagrangian}, it follows that $\bm{x}^* = \bm{y}^*$ and $\zeta \, \partial \left( \frac{\|\bm{x}^*\|_{1/2}^{1/2}}{\|\bm{x}^*\|_2^{1/2}} \right) + \nabla g(\bm{x}^*) = \bm{0}$, which characterizes $ \bm{x}^* $ as a stationary point of the problem \cref{eq:OptimizatopnObjectUnCon}.  
\end{proof}

\subsection{Global convergence and convergence rate}

Inspired by \cite{tao2022minimization}, we introduce a new merit function tailored for analyzing the convergence properties of the ADMM scheme \cref{eq:admm_updates}, defined as
\begin{equation}\label{eq:MeritFunction}
	\mathcal{L}_m(\bm{x}, \bm{y}) 
	= \zeta\frac{\|\bm{x}\|_{1/2}^{1/2}}{\|\bm{x}\|_{2}^{1/2}} + g(\bm{x}) 
	+ \frac{\rho}{2} \|\bm{x} - \bm{y}\|_2^2 .
\end{equation}
Next,we begin with several auxiliary lemmas, which, in conjunction with the K\L{} property and the associated K\L{} inequality, will be employed to establish both global convergence and a R-linear convergence.

\begin{lemma}\label{lem:MSufficientDescent}
	Consider the sequence $\{(\bm{x}_k, \bm{y}_k)\}$ generated by the ADMM updates in \cref{eq:admm_updates}, and let $\mathcal{L}_m(\bm{x}, \bm{y})$ be the merit function defined in \cref{eq:MeritFunction}. Suppose that $\lambda_{\min}(\bm{A}^\top \bm{A}) > \frac{L_g^3 + 2L_g^2 \rho - \rho^3}{\rho^2}$, then there exists a constant $\tilde{\kappa}_1 > 0$ such that $\mathcal{L}_m(\bm{x}, \bm{y})$ satisfies the sufficient descent property
	\begin{equation}\label{eq:MeritFunctionSufficentDec}
		\mathcal{L}_m(\bm{x}_{k+1}, \bm{y}_{k+1}) 
		\leq \mathcal{L}_m(\bm{x}_k, \bm{y}_k) 
		- \tilde{\kappa}_1 \|\bm{y}_{k+1} - \bm{y}_k\|_2^2,
	\end{equation}
	where $	\tilde{\kappa}_1 = \frac{\lambda_{\min}(\bm{A}^\top \bm{A}) + \rho}{2} - \frac{L_g^2}{\rho} - \frac{L_g^3}{2\rho^2}.$
\end{lemma}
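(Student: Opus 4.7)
My plan is to reduce the descent of the merit function $\mathcal{L}_m$ to the descent of the augmented Lagrangian $\mathcal{L}_\rho$ already furnished by \cref{lem:SufficientDescent}. The key observation is that the first-order optimality of the $\bm{y}$-subproblem \cref{eq:admm_y} combined with the dual update \cref{eq:admm_z} yields $\bm{\lambda}_{k+1} = \nabla g(\bm{y}_{k+1})$ at every iteration. Consequently,
$$\mathcal{L}_m(\bm{x}_k,\bm{y}_k) - \mathcal{L}_\rho(\bm{x}_k,\bm{y}_k,\bm{\lambda}_k) \;=\; g(\bm{x}_k) - g(\bm{y}_k) - \langle \nabla g(\bm{y}_k),\, \bm{x}_k - \bm{y}_k\rangle \;=:\; R_k \;\ge\; 0,$$
where nonnegativity follows from convexity of $g$ (indeed $R_k = \tfrac12\|\bm{A}(\bm{x}_k-\bm{y}_k)\|_2^2$ since $g$ is quadratic), while the descent lemma for the $L_g$-smooth map $g$ supplies the upper bound $R_k \le \tfrac{L_g}{2}\|\bm{x}_k-\bm{y}_k\|_2^2$.

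With this identity in hand, I would split
$$\mathcal{L}_m(\bm{x}_{k+1},\bm{y}_{k+1}) - \mathcal{L}_m(\bm{x}_k,\bm{y}_k) \;=\; \bigl[\mathcal{L}_\rho^{k+1} - \mathcal{L}_\rho^k\bigr] + R_{k+1} - R_k,$$
drop the nonpositive $-R_k$, apply \cref{lem:SufficientDescent} to control $\mathcal{L}_\rho^{k+1}-\mathcal{L}_\rho^k$ by $-\bigl(\tfrac{\lambda_{\min}(\bm{A}^\top\bm{A})+\rho}{2}-\tfrac{L_g^2}{\rho}\bigr)\|\bm{y}_{k+1}-\bm{y}_k\|_2^2$, and bound $R_{k+1}$ through the dual-update identity
$$\|\bm{x}_{k+1}-\bm{y}_{k+1}\|_2 \;=\; \tfrac{1}{\rho}\|\bm{\lambda}_{k+1}-\bm{\lambda}_k\|_2 \;\le\; \tfrac{L_g}{\rho}\|\bm{y}_{k+1}-\bm{y}_k\|_2,$$
which yields $R_{k+1} \le \tfrac{L_g^3}{2\rho^2}\|\bm{y}_{k+1}-\bm{y}_k\|_2^2$. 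Summing the two contributions produces exactly the advertised constant $\tilde{\kappa}_1 = \tfrac{\lambda_{\min}(\bm{A}^\top\bm{A})+\rho}{2} - \tfrac{L_g^2}{\rho} - \tfrac{L_g^3}{2\rho^2}$, whose positivity is equivalent, after multiplying through by $2\rho^2$, to the standing hypothesis $\lambda_{\min}(\bm{A}^\top\bm{A}) > (L_g^3 + 2L_g^2\rho - \rho^3)/\rho^2$.

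I do not foresee a serious obstacle: every ingredient is either a first-order optimality condition of the ADMM subproblems or a direct consequence of $g$ being a quadratic with Hessian $\bm{A}^\top\bm{A}$. The only care required lies in the bookkeeping of $R_k$ versus $R_{k+1}$: one must drop the nonpositive $-R_k$ and apply the Lipschitz-type upper bound only to $R_{k+1}$, so that the extra term $\tfrac{L_g^3}{2\rho^2}\|\bm{y}_{k+1}-\bm{y}_k\|_2^2$ can be absorbed into a single coefficient in front of $\|\bm{y}_{k+1}-\bm{y}_k\|_2^2$; attempting the symmetric choice would introduce a $\|\bm{y}_k-\bm{y}_{k-1}\|_2^2$ term that cannot be collapsed without a telescoping argument.
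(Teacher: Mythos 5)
Your argument is correct and is essentially the paper's own proof in slightly different packaging: the paper likewise uses $\bm{\lambda}_{k+1}=\nabla g(\bm{y}_{k+1})$ to show $\mathcal{L}_\rho^{k+1}\ge \mathcal{L}_m^{k+1}-\tfrac{L_g}{2}\|\bm{x}_{k+1}-\bm{y}_{k+1}\|_2^2$ (your bound on $R_{k+1}$) and $\mathcal{L}_\rho^{k}\le \mathcal{L}_m^{k}$ (your $R_k\ge 0$), then invokes \cref{lem:SufficientDescent}, the identity $\bm{x}_{k+1}-\bm{y}_{k+1}=(\bm{\lambda}_{k+1}-\bm{\lambda}_k)/\rho$, and $\|\bm{\lambda}_{k+1}-\bm{\lambda}_k\|_2\le L_g\|\bm{y}_{k+1}-\bm{y}_k\|_2$ exactly as you do. Your bookkeeping via $R_k$ and the resulting constant $\tilde{\kappa}_1$ match the paper's.
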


\begin{proof}
    For brevity, denote $\mathcal{L}_{\rho}^k = \mathcal{L}_{\rho}(\bm{x}_k, \bm{y}_k, \bm{\lambda}_k)$. Using the update $\bm{\lambda}_{k+1} = \nabla g(\bm{y}_{k+1})$ and the quadratic upper bound induced by the $L_g$-smoothness of $g$, we have $g(\bm{x}_{k+1}) \leq g(\bm{y}_{k+1}) + \langle \nabla g(\bm{y}_{k+1}), \bm{x}_{k+1} - 	\bm{y}_{k+1} \rangle + \frac{L_g}{2} \|\bm{x}_{k+1} - \bm{y}_{k+1}\|_2^2.$
	It follows from $\mathcal{H}(\bm{x}) = \zeta \frac{\|\bm{x}\|_{1/2}^{1/2}}{\|\bm{x}\|_2^{1/2}} + g(\bm{x})$ that
	\begin{align*}
		\mathcal{L}_{\rho}^{k+1} 
		&= \zeta \frac{\|\bm{x}_{k+1}\|_{1/2}^{1/2}}{\|\bm{x}_{k+1}\|_2^{1/2}} + g(\bm{y}_{k+1}) + \langle \nabla g(\bm{y}_{k+1}), \bm{x}_{k+1} - \bm{y}_{k+1} \rangle + \frac{\rho}{2} \|\bm{x}_{k+1} - \bm{y}_{k+1}\|_2^2 \\
		&\geq \mathcal{H}(\bm{x}_{k+1}) + \frac{\rho - L_g}{2} \|\bm{x}_{k+1} - \bm{y}_{k+1}\|_2^2.
	\end{align*}
	
	On the other hand, by convexity of $g$ and $\bm{\lambda}_k = \nabla g(\bm{y}_k)$, we obtain $g(\bm{y}_k) + \langle \nabla g(\bm{y}_k), \bm{x}_k - \bm{y}_k \rangle \leq g(\bm{x}_k),$
	which implies $\mathcal{L}_{\rho}^k \leq \mathcal{H}(\bm{x}_k) + \frac{\rho}{2} \|\bm{x}_k - \bm{y}_k\|_2^2.$ For notational convenience, let $\mathcal{L}_m^{k+1} := \mathcal{L}_m(\bm{x}_{k+1}, \bm{y}_{k+1})$.
	 Combining with \cref{lem:SufficientDescent} and the update rule in \cref{eq:admm_z}, the above two inequalities imply that
	\begin{equation}\label{eq:descent_L}
		\mathcal{L}_{m}^{k+1} \leq \mathcal{L}_{m}^k - \left( \frac{\lambda_{\min}(\bm{A}^\top \bm{A}) + \rho}{2} - \frac{L_g^2}{\rho} \right) \|\bm{y}_{k+1} - \bm{y}_k\|_2^2 + \frac{L_g}{2\rho^2} \|\bm{\lambda}_{k+1} - \bm{\lambda}_k\|_2^2.
	\end{equation}
	It follows from $\|\nabla g(\bm{y}_{k+1}) - \nabla g(\bm{y}_k)\| \leq L_g \|\bm{y}_{k+1} - \bm{y}_k\|$ and $\bm{\lambda}_k = \nabla g(\bm{y}_k)$ that 
	\begin{equation}\label{eq:MsfficentDecent}
		\mathcal{L}_m^{k+1} \leq \mathcal{L}_m^k - \left( \frac{\lambda_{\min}\left(\bm{A}^{\top}\bm{A}\right) + \rho}{2} - \frac{2\rho L_g^2 + L_g^3}{2\rho^2} \right) \|\bm{y}_k - \bm{y}_{k+1}\|_2^2.
	\end{equation}
	Let $ \tilde{\kappa}_1 = \frac{\lambda_{\min}\left(\bm{A}\bm{A}^{\top}\right) + \rho}{2} - \frac{2\rho L_g^2 + L_g^3}{2\rho^2}$, it further follows from \cref{eq:MsfficentDecent} that the inequality \cref{eq:MeritFunctionSufficentDec} holds with $\tilde{\kappa}_1$, which is positive when $\lambda_{\min}(\bm{A}^\top \bm{A}) > \frac{L_g^3 + 2L_g^2 \rho - \rho^3}{\rho^2}$. This establishes the sufficient descent property \cref{eq:MeritFunctionSufficentDec} and completes the proof.
\end{proof}

\begin{lemma}\label{lem:subgradient_bound}
	Let $\{(\bm{x}_k, \bm{y}_k)\}$ be the sequence generated by the update scheme in \cref{eq:admm_updates}, then there is a positive constant $\tilde{\kappa}_2$ satisfying
	\begin{equation}
		\operatorname{dist}^2\left(\mathbf{0}, \partial \mathcal{L}_m(\bm{x}_{k+1}, \bm{y}_{k+1})\right) \leq \tilde{\kappa}_2 \|\bm{y}_{k+1} - \bm{y}_k\|_2^2
	\end{equation}
	for all $k \geq 0$, where $\mathcal{L}_m$ denotes the associated merit function.
\end{lemma}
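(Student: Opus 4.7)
The plan is to build an explicit element of $\partial \mathcal{L}_m(\bm{x}_{k+1},\bm{y}_{k+1})$ using the optimality condition of the $\bm{x}$-subproblem in \cref{eq:admm_x}, and then show each of its two block-components is bounded in norm by a constant multiple of $\|\bm{y}_{k+1}-\bm{y}_k\|_2$. From the definition \cref{eq:MeritFunction}, the (limiting) subdifferential splits blockwise as
\begin{equation*}
    \partial_{\bm{x}}\mathcal{L}_m(\bm{x},\bm{y}) = \zeta\,\partial\!\left(\tfrac{\|\bm{x}\|_{1/2}^{1/2}}{\|\bm{x}\|_2^{1/2}}\right) + \nabla g(\bm{x}) + \rho(\bm{x}-\bm{y}), \qquad \partial_{\bm{y}}\mathcal{L}_m(\bm{x},\bm{y}) = -\rho(\bm{x}-\bm{y}),
\end{equation*}
so it suffices to exhibit one subgradient in each block.

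First I would invoke the optimality condition of the $\bm{x}$-update to obtain $\bm{\eta}_{k+1}\in \zeta\partial(\|\bm{x}_{k+1}\|_{1/2}^{1/2}/\|\bm{x}_{k+1}\|_2^{1/2})$ satisfying $\bm{\eta}_{k+1} = -\rho(\bm{x}_{k+1}-\bm{y}_k) - \bm{\lambda}_k$. Substituting this into the $\bm{x}$-block of $\partial\mathcal{L}_m$ at $(\bm{x}_{k+1},\bm{y}_{k+1})$, and using $\bm{\lambda}_k = \nabla g(\bm{y}_k)$ from the earlier $\bm{y}$-optimality (the identity already exploited in \cref{lem:LsubgradientBounded}), the candidate subgradient collapses to
\begin{equation*}
    \bm{\xi}_1^{k+1} := \rho(\bm{y}_k - \bm{y}_{k+1}) + \bigl(\nabla g(\bm{x}_{k+1}) - \nabla g(\bm{y}_k)\bigr) \in \partial_{\bm{x}} \mathcal{L}_m(\bm{x}_{k+1},\bm{y}_{k+1}).
\end{equation*}
For the $\bm{y}$-block, the update rule \cref{eq:admm_z} combined with $\bm{\lambda}_{j} = \nabla g(\bm{y}_j)$ gives $\bm{\xi}_2^{k+1}:=-\rho(\bm{x}_{k+1}-\bm{y}_{k+1}) = \nabla g(\bm{y}_k) - \nabla g(\bm{y}_{k+1})$, which is immediately bounded by $L_g\|\bm{y}_{k+1}-\bm{y}_k\|_2$ via Lipschitz continuity.

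The remaining work is to bound $\|\bm{\xi}_1^{k+1}\|_2$. The first term is trivially $\rho\|\bm{y}_{k+1}-\bm{y}_k\|_2$. For the second, I would use $\bm{x}_{k+1}-\bm{y}_k = (\bm{y}_{k+1}-\bm{y}_k) + \rho^{-1}(\bm{\lambda}_{k+1}-\bm{\lambda}_k)$, together with $\|\bm{\lambda}_{k+1}-\bm{\lambda}_k\|_2 \leq L_g\|\bm{y}_{k+1}-\bm{y}_k\|_2$, to get $\|\bm{x}_{k+1}-\bm{y}_k\|_2 \leq (1+L_g/\rho)\|\bm{y}_{k+1}-\bm{y}_k\|_2$ and hence $\|\nabla g(\bm{x}_{k+1})-\nabla g(\bm{y}_k)\|_2 \leq L_g(1+L_g/\rho)\|\bm{y}_{k+1}-\bm{y}_k\|_2$ by Lipschitzness again. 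The triangle inequality then yields $\|\bm{\xi}_1^{k+1}\|_2 \leq (\rho + L_g + L_g^2/\rho)\|\bm{y}_{k+1}-\bm{y}_k\|_2$. Summing the squared norms of the two blocks gives
\begin{equation*}
    \operatorname{dist}^2(\mathbf{0},\partial\mathcal{L}_m(\bm{x}_{k+1},\bm{y}_{k+1})) \leq \|\bm{\xi}_1^{k+1}\|_2^2 + \|\bm{\xi}_2^{k+1}\|_2^2 \leq \tilde{\kappa}_2\|\bm{y}_{k+1}-\bm{y}_k\|_2^2,
\end{equation*}
with $\tilde{\kappa}_2 = (\rho+L_g+L_g^2/\rho)^2 + L_g^2$.

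The main conceptual obstacle is making rigorous the passage from the $\bm{x}$-optimality condition of the inner problem to a genuine element of the limiting subdifferential of the ratio $\|\bm{x}\|_{1/2}^{1/2}/\|\bm{x}\|_2^{1/2}$ appearing in $\mathcal{L}_m$: because this functional is neither convex nor smooth, one must argue that the regular subgradient produced by the first-order condition belongs to the limiting subdifferential used in $\partial\mathcal{L}_m$. Given that the excerpt already treats this identification implicitly (e.g., in \cref{lem:LsubgradientBounded}), I expect to invoke it by reference rather than redo it; the rest of the proof is a bookkeeping exercise chaining the update relations with the $L_g$-Lipschitz property of $\nabla g$.
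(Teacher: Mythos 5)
Your proposal is correct and follows essentially the same route as the paper: both extract an explicit subgradient of $\mathcal{L}_m$ at $(\bm{x}_{k+1},\bm{y}_{k+1})$ from the optimality condition of the $\bm{x}$-subproblem, identify the $\bm{y}$-block with $\bm{\lambda}_k-\bm{\lambda}_{k+1}$, and bound everything via $\bm{\lambda}_j=\nabla g(\bm{y}_j)$ and the $L_g$-Lipschitz continuity of $\nabla g$. The only difference is cosmetic bookkeeping in the exact form of the $\bm{x}$-block element (your $\bm{\xi}_1^{k+1}$ versus the paper's $\bm{d}_1^{k+1}=\bm{\lambda}_{k+1}-\bm{\lambda}_k+\rho(\bm{y}_{k+1}-\bm{y}_k)$), which yields a slightly different but equally valid constant $\tilde{\kappa}_2$.
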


\begin{proof}
	Consider the optimality condition of \cref{eq:admm_x} together with the multiplier update $\bm{\lambda}_{k+1} - \bm{\lambda}_k = \rho (\bm{x}_{k+1} - \bm{y}_{k+1})$, which yields $\bm{0} \in \zeta \, \partial \left(\frac{\|\bm{x}_{k+1}\|_{1/2}^{1/2}}{\|\bm{x}_{k+1}\|_2^{1/2}}\right) + \bm{\lambda}_{k+1} + \rho (\bm{y}_{k+1} - \bm{y}_k).$ Let $\bm{d}_1^{k+1} := \bm{\lambda}_{k+1} - \bm{\lambda}_k + \rho (\bm{y}_{k+1} - \bm{y}_k)$.	Then
	\begin{equation*}
		\bm{d}_1^{k+1} \in \zeta \, \partial \left(\frac{\|\bm{x}_{k+1}\|_{1/2}^{1/2}}{\|\bm{x}_{k+1}\|_2^{1/2}}\right) + \nabla g(\bm{x}_{k+1}) + \rho (\bm{x}_{k+1} - \bm{y}_{k+1}),
	\end{equation*}
	which shows that $\bm{d}_1^{k+1} \in \partial_{\bm{x}} \mathcal{L}_m^{k+1}$. Similarly, let $\bm{d}_2^{k+1} := \rho (\bm{y}_{k+1} - \bm{x}_{k+1}) \in \partial_{\bm{y}} \mathcal{L}_m^{k+1}.$ Then it follows from the $\bm{\lambda}$-update in \cref{eq:admm_z} that $\bm{d}_2^{k+1} = \bm{\lambda}_k - \bm{\lambda}_{k+1}$. Set $\bm{d}^{k+1} := (\bm{d}_1^{k+1}, \bm{d}_2^{k+1})^\top$. Then $\bm{d}^{k+1} \in \partial \mathcal{L}_m^{k+1}$ and so there exists a positive constant $\tilde{\kappa}_2>0$ such that
	\begin{equation*}
		\begin{aligned}
			\|\bm{d}^{k+1}\|_2^2 & \overset{a}{\leq} 2\|\bm{\lambda}_{k+1} - \bm{\lambda}_{k}\|_2^2 + \rho^2 \| \bm{y}_k - \bm{y}_{k+1}\|_2^2 + 2\rho  \| \bm{\lambda}_{k+1} - \bm{\lambda}_{k}\| \|\bm{y}_{k+1} - \bm{y}_{k} \| \\
			& \overset{b}{\leq}  \left(L_g + \left(L_g+\rho\right)^2\right)\| \bm{y}_{k} - \bm{y}_{k+1}\|_2^2 = \tilde{\kappa}_2\| \bm{y}_{k} - \bm{y}_{k+1}\|_2^2,
		\end{aligned}
	\end{equation*}
	where $\tilde{\kappa}_2 = L_g + \left(L_g+\rho\right)^2>0$, $\overset{a}{\leq}$ is obtained by the Cauchy-Schwarz inequality, and  $\overset{b}{\leq}$ is obtained by $\|\bm{\lambda}_{k+1} - \bm{\lambda}_{k}\|_2 = \|\nabla g(\bm{y}_{k+1}) - \nabla g(\bm{y}_k)\|_2 \leq L_g \|\bm{y}_{k+1} - \bm{y}_k\|_2.$ 
\end{proof}

\begin{lemma}\label{lem:unified_power_diff}
	Let $a,b>0$ and $\bar{p}>0$. Denote $\tilde{m}:=\min\{a,b\}$ and $\tilde{M}:=\max\{a,b\}$. Then
	\begin{equation*}
		\lvert a^{\bar{p}}-b^{\bar{p}}\rvert \;\le\; 
		\begin{cases}
			\bar{p}\,\lvert a-b\rvert	\tilde{m}^{\,\bar{p}-1}, & 0<\bar{p}\le 1,\\[2mm]
			\bar{p}\,\lvert a-b\rvert	\tilde{M}^{\,\bar{p}-1}, & \bar{p}\ge 1.
		\end{cases}
	\end{equation*}
	In particular, for $\bar{p}=1$ both bounds coincide and yield $\lvert a-b\rvert$.
\end{lemma}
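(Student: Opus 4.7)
The plan is to prove this via the Mean Value Theorem applied to the scalar function $f(t)=t^{\bar p}$ on a suitable interval, followed by a monotonicity argument on $t^{\bar p-1}$ to replace the intermediate value by either $\tilde m$ or $\tilde M$ depending on the sign of $\bar p-1$. Throughout, the case $a=b$ is trivial (both sides vanish), so I will assume without loss of generality that $\tilde m<\tilde M$.

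First I would apply MVT to $f(t)=t^{\bar p}$, which is continuous on $[\tilde m,\tilde M]$ and differentiable on $(\tilde m,\tilde M)$ for any $\bar p>0$ and any $\tilde m>0$. This yields a point $\xi\in(\tilde m,\tilde M)$ with
\begin{equation*}
\tilde M^{\bar p}-\tilde m^{\bar p}=\bar p\,\xi^{\bar p-1}(\tilde M-\tilde m).
\end{equation*}
Since $|a^{\bar p}-b^{\bar p}|=\tilde M^{\bar p}-\tilde m^{\bar p}$ and $|a-b|=\tilde M-\tilde m$, it suffices to bound $\xi^{\bar p-1}$.

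Next I would split on the sign of $\bar p-1$. When $0<\bar p\le 1$, the exponent $\bar p-1\le 0$, so $t\mapsto t^{\bar p-1}$ is nonincreasing on $(0,\infty)$; from $\xi>\tilde m>0$ we infer $\xi^{\bar p-1}\le \tilde m^{\bar p-1}$, which gives the first bound. When $\bar p\ge 1$, the exponent $\bar p-1\ge 0$, so $t\mapsto t^{\bar p-1}$ is nondecreasing on $[0,\infty)$; from $\xi<\tilde M$ we obtain $\xi^{\bar p-1}\le \tilde M^{\bar p-1}$, yielding the second bound. In the boundary case $\bar p=1$ the two estimates coincide and reduce to the identity $|a-b|=|a-b|$, as claimed.

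There is no genuine obstacle here; the only mild care needed is to ensure $\tilde m>0$ so that $\tilde m^{\bar p-1}$ is well defined when $\bar p<1$, which is guaranteed by the hypothesis $a,b>0$. The argument is completely elementary and the entire proof should fit in a few lines.
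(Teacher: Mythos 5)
Your proof is correct and follows essentially the same route as the paper's: apply the mean value theorem to $t\mapsto t^{\bar p}$ on $[\tilde m,\tilde M]$ and then use the monotonicity of $t\mapsto t^{\bar p-1}$ to bound the intermediate point by $\tilde m$ or $\tilde M$ according to the sign of $\bar p-1$. Your explicit handling of the trivial case $a=b$ is a minor tidiness improvement over the paper's version, but the argument is otherwise identical.
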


\begin{proof}
	Consider $f_{\bar{p}}(x)=x^{\bar{p}}$ on $(0,\infty)$, so $f_{\bar{p}}'(x)=\bar{p}\,x^{\bar{p}-1}$. Without loss of generality assume $a\ge b>0$, and set $\tilde{m}=b$, $\tilde{M}=a$. By the mean value theorem there exists $c\in(b,a)\subseteq[\tilde{m},\tilde{M}]$ such that	$a^{\bar{p}}-b^{\bar{p}}=f_{\bar{p}}'(c)\,(a-b)=\bar{p}\,c^{\,\bar{p}-1}(a-b).$ Taking absolute values gives $\lvert a^{\bar{p}}-b^{\bar{p}}\rvert = \bar{p}\,c^{\,\bar{p}-1}\,\lvert a-b\rvert.$ If $0<\bar{p}\le 1$, then $\bar{p}-1\le 0$ and the function $x\mapsto x^{\bar{p}-1}$ is strictly decreasing on $(0,\infty)$. Hence $c^{\,\bar{p}-1}\le \tilde{m}^{\,\bar{p}-1}$ and
	$\lvert a^{\bar{p}}-b^{\bar{p}}\rvert \le \bar{p}\,\tilde{m}^{\,\bar{p}-1}\,\lvert a-b\rvert$. If $\bar{p}\ge 1$, then $\bar{p}-1\ge 0$ and $x\mapsto x^{\bar{p}-1}$ is (strictly) increasing. This shows that
	$c^{\,\bar{p}-1}\le \tilde{M}^{\,\bar{p}-1}$ and so
	$\lvert a^{\bar{p}}-b^{\bar{p}}\rvert \le \bar{p}\,\tilde{M}^{\,\bar{p}-1}\,\lvert a-b\rvert$.
\end{proof}

\begin{theorem}
	Consider the sequence $\{\bm{\theta}_k\}$ produced by the ADMM iteration framework \cref{eq:admm_updates}, with $\bm{\theta}_k = (\bm{x}_k, \bm{y}_k, \bm{\lambda}_k)$. 
	Assume that $\bm{b} \notin \ker(\bm{A}^{\top})$, $\lambda_{\min}(\bm{A}^\top \bm{A}) > \tfrac{L_g^3 + 2\rho L_g^2 - \rho^3}{\rho^2}$, and that the primal sequence $\{\bm{x}_k\}$ remains bounded. Under these conditions, the trajectory $\{\bm{\theta}_k\}$ possesses finite length, namely $	\sum_{k=1}^\infty \|\bm{\theta}_{k+1} - \bm{\theta}_k\|_2 < \infty,$ which in turn implies that $\bm{\theta}_k$ converges to a stationary solution of \cref{eq:ConstrainedFormulation}.
\end{theorem}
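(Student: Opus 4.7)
The plan is to combine the sufficient descent estimate of \cref{lem:MSufficientDescent} with the subgradient upper bound of \cref{lem:subgradient_bound} and a Kurdyka--\L{}ojasiewicz (K\L{}) inequality for the merit function $\mathcal{L}_m$, following the now-standard framework of Attouch--Bolte--Svaiter for descent methods on nonsmooth, nonconvex K\L{} functions. First I would verify that the sequence $\{\mathcal{L}_m(\bm{x}_k,\bm{y}_k)\}$ is monotone and bounded below, hence convergent to some limit $\mathcal{L}_m^\ast$. Monotonicity is exactly \cref{lem:MSufficientDescent}; boundedness below follows by substituting $\bm{\lambda}_k=\nabla g(\bm{y}_k)$ and using convexity of $g$ as in the proof of \cref{thm:admm_convergence}(i), yielding $\mathcal{L}_m(\bm{x}_k,\bm{y}_k)\geq \mathcal{H}(\bm{x}_k)+\tfrac{\rho}{2}\|\bm{x}_k-\bm{y}_k\|_2^2\geq \zeta>0$.

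Next I would establish the K\L{} property of $\mathcal{L}_m$. Each ingredient is semialgebraic: $\bm{x}\mapsto|x_i|^{1/2}$ is defined by the polynomial relation $\{(\bm{x},t):t\geq 0,\; t^2=|x_i|\}$, so $\|\bm{x}\|_{1/2}^{1/2}$, $\|\bm{x}\|_2^{1/2}$, and the quotient appearing in $\mathcal{H}$ are semialgebraic (with the prescribed value $1$ at the origin), while $g$ and the coupling $(\rho/2)\|\bm{x}-\bm{y}\|_2^2$ are polynomial. By the Bolte--Daniilidis--Lewis theorem, $\mathcal{L}_m$ therefore satisfies the K\L{} property at every point of $\mathrm{dom}\,\partial\mathcal{L}_m$ with a concave desingularizing function $\psi$ on a neighborhood of any accumulation point $\hat{\bm{\zeta}}=(\hat{\bm{x}},\hat{\bm{y}})$ provided by \cref{thm:admm_convergence}(ii).

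The central step is the standard single-step length inequality. Write $\bm{\zeta}_k=(\bm{x}_k,\bm{y}_k)$ and $\Delta_k:=\mathcal{L}_m(\bm{\zeta}_k)-\mathcal{L}_m^\ast\geq 0$. From \cref{lem:subgradient_bound} we have $\mathrm{dist}(\bm{0},\partial\mathcal{L}_m(\bm{\zeta}_{k+1}))\leq\sqrt{\tilde{\kappa}_2}\,\|\bm{y}_{k+1}-\bm{y}_k\|_2$, and the K\L{} inequality gives $\psi'(\Delta_{k+1})\cdot\mathrm{dist}(\bm{0},\partial\mathcal{L}_m(\bm{\zeta}_{k+1}))\geq 1$. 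Concavity of $\psi$, together with the descent $\Delta_k-\Delta_{k+1}\geq \tilde{\kappa}_1\|\bm{y}_{k+1}-\bm{y}_k\|_2^2$ from \cref{lem:MSufficientDescent}, then yields $\tilde{\kappa}_1\|\bm{y}_{k+1}-\bm{y}_k\|_2^2\leq \sqrt{\tilde{\kappa}_2}\,\|\bm{y}_{k+1}-\bm{y}_k\|_2\bigl[\psi(\Delta_k)-\psi(\Delta_{k+1})\bigr]$. Applying the Young-type inequality $2\sqrt{uv}\leq u+v$ to the right-hand side produces a telescoping bound of the form $2\|\bm{y}_{k+1}-\bm{y}_k\|_2\leq \|\bm{y}_k-\bm{y}_{k-1}\|_2 + \tfrac{\sqrt{\tilde{\kappa}_2}}{\tilde{\kappa}_1}\bigl[\psi(\Delta_k)-\psi(\Delta_{k+1})\bigr]$, whose summation gives $\sum_k\|\bm{y}_{k+1}-\bm{y}_k\|_2<\infty$. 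The usual uniformization argument (choose the initial index $K_0$ large enough so that $\|\bm{\zeta}_{K_0}-\hat{\bm{\zeta}}\|_2$ and $\Delta_{K_0}$ are both small, which is possible by \cref{thm:admm_convergence}(iii) and the convergence of $\Delta_k$ to $0$) ensures the tail of $\{\bm{\zeta}_k\}$ remains in the K\L{} neighborhood throughout.

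Finite length of $\{\bm{y}_k\}$ then transfers to $\{\bm{\lambda}_k\}$ through Lipschitz continuity $\|\bm{\lambda}_{k+1}-\bm{\lambda}_k\|_2\leq L_g\|\bm{y}_{k+1}-\bm{y}_k\|_2$, and to $\{\bm{x}_k\}$ through the relation $\bm{x}_{k+1}-\bm{x}_k=\rho^{-1}(\bm{\lambda}_{k+1}-\bm{\lambda}_k)-\rho^{-1}(\bm{\lambda}_k-\bm{\lambda}_{k-1})+(\bm{y}_{k+1}-\bm{y}_k)$ derived in \cref{thm:admm_convergence}(iii). Hence $\sum_k\|\bm{\theta}_{k+1}-\bm{\theta}_k\|_2<\infty$, so $\{\bm{\theta}_k\}$ is Cauchy and converges to some $\bm{\theta}^\ast$, which by \cref{thm:admm_convergence}(iv) is a stationary point of \cref{eq:ConstrainedFormulation}. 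The main obstacles I anticipate are (a) justifying semialgebraicity (and thus the K\L{} property) for the ratio near the origin where the value of $\mathcal{H}$ is fixed by convention, since standard closure arguments require care when the effective domain is restricted, and (b) carrying out the uniformization step cleanly, because \cref{lem:MSufficientDescent} and \cref{lem:subgradient_bound} are stated in terms of $\{\bm{x}_k,\bm{y}_k,\bm{\lambda}_k\}$ while the K\L{} inequality is applied only to the $(\bm{x},\bm{y})$-merit function; the discrepancy is reconciled using $\bm{\lambda}_k=\nabla g(\bm{y}_k)$.
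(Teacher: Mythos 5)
Your overall architecture---sufficient descent from \cref{lem:MSufficientDescent}, the relative-error bound from \cref{lem:subgradient_bound}, a K\L{} inequality for $\mathcal{L}_m$, the concavity/Young telescoping step, and the transfer of finite length to $\{\bm{\lambda}_k\}$ and $\{\bm{x}_k\}$---matches the paper's proof, which likewise defers the standard Attouch--Bolte--Svaiter bookkeeping to Tao and to Li--Pong. The genuine gap is precisely the item you flag as ``obstacle (a)'' and then leave unresolved: you must prove that no accumulation point of $\{(\bm{x}_k,\bm{y}_k)\}$ equals the origin. This is not a technicality about semialgebraic closure. The ratio $\|\bm{x}\|_{1/2}^{1/2}/\|\bm{x}\|_2^{1/2}$ is genuinely discontinuous at $\bm{0}$: its directional limits sweep out $[1,n^{3/4}]$ while its value there is pinned to $1$ by convention. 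Consequently, if the accumulation point had $\hat{\bm{x}}=\bm{0}$, the continuity condition $\mathcal{L}_m(\bm{x}_{k_j},\bm{y}_{k_j})\to\mathcal{L}_m(\hat{\bm{x}},\hat{\bm{y}})$ that is needed to launch the uniformized K\L{} argument could fail, and a single desingularizing function $\psi$ valid on a neighborhood of the accumulation point is not supplied by the Bolte--Daniilidis--Lewis machinery, whose nonsmooth subanalytic version requires continuity on the domain. Your telescoping step therefore cannot be justified as written.

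The paper closes this gap with a problem-specific contradiction argument: if a subsequence $\bm{x}_{k_j}\to\bm{0}$, then on the (nonempty) support the first-order condition of the $\bm{x}$-subproblem forces the gradient of the ratio term---whose $i$-th component contains $\mathrm{sgn}(x^i)/\bigl(\sqrt{|x^i|}\,\|\bm{x}\|_2^{1/2}\bigr)$ and hence blows up---to equal $\rho\bigl(y^i_{k_j}-x^i_{k_j}+\lambda^i_{k_j}/\rho\bigr)$, which stays bounded because $\{\bm{\lambda}_k\}$ is bounded by \cref{thm:admm_convergence}. Hence $\bm{x}^*\neq\bm{0}$, the tail of the iterates lives in the complement of a small hypercube around the origin, and $\mathcal{L}_m$ restricted to that region is continuous, closed and subanalytic (equivalently, your semialgebraic description is valid there), so the K\L{} property holds and the remainder of your argument goes through. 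You need to insert this step, or an equivalent uniform lower bound on the magnitude of the nonzero entries of $\bm{x}_k$ over the tail, before invoking the K\L{} inequality.
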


\begin{proof}
	We first establish the subanalyticity of the fractional term. The scalar function $\varphi(x)=|x|^{1/2}$ is subanalytic, and thus the numerator $|\bm{x}|_{1/2}^{1/2}=\sum_i |x_i|^{1/2}$ is also subanalytic. The denominator $|\bm{x}|_2^{1/2}=(\sum_i x_i^2)^{1/4}$, being the composition of $|\bm{x}|_2$ and $r \mapsto r^{1/2}$, is subanalytic on $\mathbb{R}^n \setminus {\bm{0}}$. Hence, their ratio is subanalytic on $\mathbb{R}^n \setminus {\bm{0}}$. Since the quadratic term $\tfrac{1}{2}|\bm{Ax}-\bm{b}|_2^2$ is analytic, the merit function $\mathcal{L}_m(\bm{x},\bm{y})$ is subanalytic.
	
	Next, we verify an important claim: any accumulation point of the sequence $\{(\bm{x}_k,\bm{y}_k)\}$ generated by \cref{alg:admm_full} cannot be $\bm{0}$. Suppose that there exists a subsequence $\bm{x}_{k_j} \to \bm{0}$. By \cref{thm:admm_convergence}, we have $\|\bm{x}_k - \bm{x}_{k+1}\|\to \bm{0}, k \to \infty,$ which implies that $\bm{x}_{k_{j+1}} \to \bm{0}$ as well. Since $\bm{x}_{k_j}-\bm{y}_{k_j}\to \bm{0}$, it follows that $\bm{y}_{k_j}\to \bm{0}$. Consider the subproblem \cref{eq:admm_x}. On the support set $T_{k_j} \neq \emptyset$ it is differentiable, with gradient
	\begin{equation*}
		\nabla_{x^i}\!\left(\frac{\|\bm{x}\|_{1/2}^{1/2}}{\|\bm{x}\|_2^{1/2}}\right) 
		+ \rho\left(x^i - y^i_{k_j} - \frac{\lambda^i_{k_j}}{\rho}\right), 
		\forall i \in T_{k_j},
	\end{equation*}
	where
$\nabla_{x^i}\!\left(\frac{\|\bm{x}\|_{1/2}^{1/2}}{\|\bm{x}\|_2^{1/2}}\right) = \frac{1}{2}\left(\frac{\mathrm{sgn}(x^i)}{\sqrt{|x^i|}\,\|\bm{x}\|_2^{1/2}} - \frac{\|\bm{x}\|_{1/2}^{1/2}\,x^i}{\|\bm{x}\|_2^{5/2}}\right)$. By \cref{lem:subgradient_bound}, any accumulation point $(\bm{x}^*,\bm{y}^*)$ of the sequence generated by the iterative framework \cref{eq:admm_updates} satisfies $\bm{0} \in \partial \mathcal{L}_m(\bm{x}^*,\bm{y}^*)$ and $\bm{x}^*=\bm{y}^*$,	which implies that each accumulation point is a stationary point. Hence, the first-order optimality condition yields $ \lim\limits_{j\to\infty}\nabla_{x^i_{k_j}}\left(\frac{\|\bm{x}\|_{1/2}^{1/2}}{\|\bm{x}\|_2^{1/2}}\right)= \lim\limits_{j\to\infty}\rho\left(y^i_{k_j} - x^i_{k_j} +\frac{\lambda^i_{k_j}}{\rho}\right).$ However, as $j\to\infty$ with $\bm{x}_{k_j}\to\bm{0}$, the gradient term on the left diverges to infinity, while the right-hand side converges to $\lambda_i^*/\rho$ ($\lambda_i^*$ is the $i$-th entry of $\bm{\lambda}^*$). Since \cref{thm:admm_convergence} ensures that $\{\bm{\lambda}_{k_j}\}$ is bounded, this is a contradiction. Hence, no accumulation point of $\{(\bm{x}_k,\bm{y}_k)\}$ can be $\bm{0}$.
	
	By the claim, there exists $\epsilon_{1}>0$ such that the accumulation point $\bm{x}^*$ does not lie in the open hypercube $S_{\epsilon_{1}} := \{\bm{x}\in\mathbb{R}^n : |x_i| < \epsilon_{1}\}.$ Similarly, there exists $\epsilon_{2}>0$ such that $\bm{y}^*$ does not lie in $S_{\epsilon_{2}}$. Let $\epsilon = \max\{\epsilon_{1},\epsilon_{2}\}$. Then the function $\mathcal{L}_m(\bm{x},\bm{y})$ restricted to $S_\epsilon^c \times S_\epsilon^c$ is continuous and closed, and thus satisfies the K\L{} property. Naturally, $\mathcal{L}_m(\bm{x},\bm{y})$ satisfies the K\L{} property at $(\bm{x}^*,\bm{y}^*)$. The remaining part of the proof follows the same arguments as in \cite[Theorem 5.8]{tao2022minimization} and \cite[Theorem 4]{li2015global}, and is omitted here.
\end{proof}

\begin{theorem} 
	Assume that the sensing matrix $\bm{A} \in \mathbb{R}^{m \times n}$ has full row rank, the sequence $\{\bm{x}^k\}$ is bounded, $\lambda_{\min}(\bm{A}^\top \bm{A}) > \frac{L_g^3 + 2\rho L_g^2 - \rho^3}{\rho^2}$, and $\bm{b} \notin \ker(\bm{A}^\top)$. 
	Let $\{\bm{\theta}_k\}$ be the sequence generated by \cref{eq:admm_updates},  $T := \mathrm{supp}(\bm{x}^*)$ be the support of $\bm{x}^*$, and $\Omega:= \left\{ \bm{x} \in \mathbb{R}^n \,\big|\, \mathrm{supp}(\bm{x}) = T,\; \mathrm{sgn}(\bm{x}) = \mathrm{sgn}(\bm{x}^*) \right\}$.
	Then the following statements hold:
	\begin{enumerate}
		\item[(i)] There exists an iteration index $\tilde{K}$ such that $\bm{x}_k, \bm{y}_k \in \Omega$ for all $k \geq \tilde{K}$;
		\item[(ii)] If $\bm{A}_{T}^\top \bm{A}_{T} \succ 0$, then there exists a constant $\bar{\zeta} > 0$ such that $\{\bm{\theta}_k\}$ converges linearly for any $0 < \zeta < \bar{\zeta}$. 
		In particular, there exist constants $\tilde{\kappa}_3 > 0$ and $\varrho \in (0,1)$ such that $\|\bm{\theta}_k - \bm{\theta}^*\| \leq \tilde{\kappa}_3 \varrho^k$ holds.
	\end{enumerate}
\end{theorem}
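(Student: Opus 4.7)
I would split the argument into two phases: a finite-time support/sign identification (part (i)), followed by a local smooth phase in which the merit function $\mathcal{L}_m$ has K\L{} exponent $1/2$, from which the R-linear rate (part (ii)) follows via the machinery already set up in \cref{lem:MSufficientDescent} and \cref{lem:subgradient_bound}.

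For part (i), the preceding theorem gives $\bm{\theta}_k \to \bm{\theta}^*$ with $\bm{x}^* = \bm{y}^*$, and the ``no accumulation point is $\bm{0}$'' claim used there yields $\bm{x}^* \neq \bm{0}$. For $i \in T$ the inequality $|x^{*i}| > 0$ combined with $x_k^i \to x^{*i}$ immediately forces $\mathrm{sgn}(x_k^i) = \mathrm{sgn}(x^{*i})$ for all $k$ large. For $i \notin T$ one must show $x_k^i = 0$ eventually, and here the explicit thresholding form \cref{eq:updata_subx} is crucial: the inner $\bm{x}$-update returns the exact zero whenever $|m_t^i| \le p(\tilde{\delta})$. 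Since $\tilde{\delta}_k \to \tilde{\delta}^*$ and $m_k^i \to m^{*i}$, and since $x^{*i} = 0$ is a stationary point of the scalar problem $\tilde{\delta}^*|x|^{1/2} + \tfrac{1}{2}(x - m^{*i})^2$, one necessarily has $|m^{*i}| \le p(\tilde{\delta}^*)$. To upgrade this to a strict inequality, I would invoke the hypothesis $\bm{A}_T^\top \bm{A}_T \succ 0$ from part (ii): under this condition the reduced problem at $\bm{x}^*_T$ is locally strongly convex, so $\bm{x}^*$ is an isolated critical point, giving the strict bound $|m^{*i}| < p(\tilde{\delta}^*)$. Continuity then provides $|m_k^i| < p(\tilde{\delta}_k)$ for all large $k$, hence $x_k^i = 0$; membership $\bm{y}_k \in \Omega$ follows from $\bm{y}_k - \bm{x}_k \to \bm{0}$ together with the same thresholding reasoning applied via the $\bm{y}$-update.

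For part (ii), on $\Omega$ every nonzero coordinate keeps its sign and stays bounded away from zero in a neighborhood of $\bm{x}^*$, so $|x_i|^{1/2} = (\mathrm{sgn}(x^{*i})\, x_i)^{1/2}$ and $\|\bm{x}\|_2^{1/2}$ are $C^\infty$ there. Hence $\mathcal{L}_m$ is $C^2$ in a neighborhood of $(\bm{x}^*, \bm{y}^*)$ when restricted to $\Omega \times \Omega$, with reduced Hessian on the $T$-coordinates of block form
\begin{equation*}
H(\zeta) \;=\; \begin{pmatrix} \zeta H_T^* + \bm{A}_T^\top \bm{A}_T + \rho \bm{I} & -\rho \bm{I} \\ -\rho \bm{I} & \rho \bm{I} \end{pmatrix},
\end{equation*}
where $H_T^*$ is the (bounded but possibly indefinite) Hessian of the smooth ratio at $\bm{x}^*_T$. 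A Schur-complement computation reduces positive definiteness of $H(\zeta)$ to that of $\zeta H_T^* + \bm{A}_T^\top \bm{A}_T$; since $\bm{A}_T^\top \bm{A}_T \succ 0$ and $\|H_T^*\| \le C_T$ depends only on $\bm{x}^*$, any $\zeta < \bar{\zeta} := \lambda_{\min}(\bm{A}_T^\top \bm{A}_T)/C_T$ makes $H(\zeta)\succ 0$. Thus $\mathcal{L}_m$ satisfies a quadratic growth condition at $\bm{\theta}^*$, equivalently the K\L{} property with exponent $1/2$. Feeding this exponent into the sufficient descent of \cref{lem:MSufficientDescent} and the subgradient bound of \cref{lem:subgradient_bound}, the standard K\L{}-based ADMM argument (as in \cite{tao2022minimization,li2015global}) produces $\tilde{\kappa}_3 > 0$ and $\varrho \in (0,1)$ with $\|\bm{\theta}_k - \bm{\theta}^*\|_2 \le \tilde{\kappa}_3 \varrho^k$.

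The hardest step is the strict thresholding bound in Step 1: the hard-thresholding formula returns exactly zero only on the \emph{open} region $\{|m^i| < p(\tilde{\delta})\}$, so without strict complementarity one could a priori have $|m^{*i}| = p(\tilde{\delta}^*)$ with $x_k^i$ oscillating around zero indefinitely. The hypothesis $\bm{A}_T^\top \bm{A}_T \succ 0$, together with the resulting isolation of $\bm{x}^*$ in the reduced problem, is precisely what upgrades the inequality to strict and makes identification finite-time rather than merely asymptotic; the Schur-complement and K\L{} parts are then comparatively routine.
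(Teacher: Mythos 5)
Your part (ii) is essentially the paper's argument in a different guise. The paper introduces the reduced function $\bar{\mathcal{H}}_T(\bm{u}) = \zeta\|\bm{u}\|_{1/2}^{1/2}/\|\bm{u}\|_2^{1/2} + \tfrac12\|\bm{A}_T\bm{u}-\bm{b}\|_2^2$, proves that the gradient $h(\bm{u})$ of the ratio term is Lipschitz with an explicit constant $L_h$ on the set where the nonzero coordinates are bounded away from zero, and then uses $\bm{A}_T^\top\bm{A}_T\succ 0$ to conclude the lower bound $\|\nabla\bar{\mathcal{H}}_T(\bm{x}_T)-\nabla\bar{\mathcal{H}}_T(\bm{x}_T^*)\|\ge \hat{L}\|\bm{x}_T-\bm{x}_T^*\|$ for $\zeta<\bar{\zeta}$, i.e.\ the K\L{} exponent $1/2$ on $\Omega$, before handing off to the cited K\L{} machinery. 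Your Hessian-plus-Schur-complement computation on $\mathcal{L}_m$ is the second-order version of the same perturbation argument and yields the same $\bar{\zeta}=\lambda_{\min}(\bm{A}_T^\top\bm{A}_T)/C_T$; the Schur complement of your block matrix is exactly $\zeta H_T^*+\bm{A}_T^\top\bm{A}_T$, so this part is sound and matches the paper's route.

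Part (i) is where the genuine gaps are (the paper omits this proof entirely, citing Tao). First, your identification argument is built on the half-thresholding formula \cref{eq:updata_subx}, but that formula solves the \emph{inner} subproblem \cref{eq:inneradmm_x}, in which the denominator $\|\bm{u}_t\|_2^{1/2}$ is frozen and the objective becomes separable; the outer subproblem \cref{eq:admm_x}, which is what the convergence analysis assumes is solved exactly, has the non-separable ratio $\|\bm{x}\|_{1/2}^{1/2}/\|\bm{x}\|_2^{1/2}$ and its minimizer is not given coordinatewise by \cref{eq:updata_subx}. Second, you invoke $\bm{A}_T^\top\bm{A}_T\succ 0$ inside the proof of part (i), but the theorem asserts (i) without that hypothesis. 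Third, even granting the thresholding form, strong convexity of the reduced problem on the support $T$ constrains only the on-support coordinates and does not deliver the strict inequality $|m^{*i}|<p(\tilde{\delta}^*)$ for $i\notin T$; isolation of $\bm{x}^*_T$ is not strict complementarity. The standard way out, which you do not use, is the jump discontinuity of the half-thresholding operator: any nonzero output satisfies $|x^i|\ge \tfrac{2}{3}|m^i|\ge\tfrac{2}{3}p(\tilde{\delta})$, which is bounded away from zero along the iterates, so $x_k^i\to 0$ forces $x_k^i=0$ for all large $k$ with no strict-complementarity assumption. As written, your part (i) would not go through.
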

\begin{proof}
	(i) The proof follows same arguments as in \cite{tao2022minimization}, and is omitted here.
	
	(ii) We adopt the proof framework in \cite{tao2022minimization}. First, we establish an important property of the stationary point $\bm{x}^*$: if $\bm{x}^*$ is a stationary point of problem \cref{eq:ConstrainedFormulation}, then there exists a constant $\bar{\zeta} > 0$ such that, for any $0 < \zeta < \bar{\zeta}$, the objective function $\mathcal{H}$ satisfies the K\L{} property with an exponent of $1/2$ restricted on $\Omega$. To verify the above assertion, we introduce a function $\bar{\mathcal{H}}_T: \mathbb{R}^\alpha \to \mathbb{R}$ ($\alpha = |T|$), defined as $\bar{\mathcal{H}}_T(\bm{u}) := \zeta \frac{\|\bm{u}\|_{1/2}^{1/2}}{\|\bm{u}\|_2^{1/2}} + \frac{1}{2} \|\bm{A}_T \bm{u} - \bm{b}\|_2^2,$
	where $T=\text{supp}(\bm{x}_k)=\text{supp}(\bm{x}^*)$. Define the sets $\Omega_{T}:= \{\bm{u} \in \mathbb{R}^{\alpha} | \bm{u}=\bm{x}_T, \bm{x} \in \Omega\}$ and $\Theta := \{\bm{x} \in \mathbb{R}^n \mid \left(\bm{x}_{T}\right)_i \geq \epsilon, i \in T \}$, where $\epsilon < \omega/n$. To maintain conciseness and avoid repetition, we only prove that
	$ \bar{\mathcal{H}}_T(\bm{u}) $ is Lipschitz continuous on the set $\Omega \cap \Theta$ and there exists a constant $\hat{L}_{\bar{\mathcal{H}}_{T}}>0$ such that, whenever $0 < \zeta < \bar{\zeta}$ and $\bm{x} \in \Omega \cap \Theta$, the inequality $\|\nabla \bar{\mathcal{H}}_T(\bm{x}_T) - \nabla \bar{\mathcal{H}}_T(\bm{x}_T^*)\|_2 \geq \hat{L}_{\bar{\mathcal{H}}_{T}} \|\bm{x}_T - \bm{x}_T^*\|_2$ holds.
	
	Evidently, $\bar{\mathcal{H}}_T(\bm{x}_T) = \mathcal{H}(\bm{x})$ if $\bm{x} \in \Omega$. When $\bm{u} \in \Omega_{T}$,  $\bar{\mathcal{H}}_T$ is differentiable on $\Omega_{T}$, with gradient $\nabla \bar{\mathcal{H}}_T(\bm{u}) = \zeta \left( \frac{|\bm{u}|^{\odot \frac{-1}{2}}  \odot \mathrm{sgn}(\bm{u})}{2 \|\bm{u}\|_2^{1/2}} - \frac{\|\bm{u}\|_{1/2}^{1/2}}{2\|\bm{u}\|_2^{5/2}} \bm{u} \right) + \bm{A}_T^\top (\bm{A}_T \bm{u} - \bm{b}),$ where $|\bm{u}|^{\odot \frac{-1}{2}} = \left(|u_1|^{\frac{-1}{2}},|u_2|^{\frac{-1}{2}}, \cdots,|u_{\alpha}|^{\frac{-1}{2}}\right)^{\top} \in \mathbb{R}^{\alpha}$. It follows directly from the definition of $\Theta$ that $\bm{x}^* \in \Theta$.
	Let $h(\bm{u}) = \nabla \left( \frac{\|\bm{u}\|_{1/2}^{1/2}}{\|\bm{u}\|_2^{1/2}} \right) = \frac{|\bm{u}|^{ \odot \frac{-1}{2}}\cdot \mathrm{sgn}(\bm{u})}{ 2\|\bm{u}\|_2^{1/2}} - \frac{\|\bm{u}\|_{1/2}^{1/2}}{2 \|\bm{u}\|_2^{5/2}} \bm{u}.$ 
	Now we prove that $h(\bm{u})$ is Lipschitz continuous on $\Omega_{T} \cap \Theta_{T}$, where $\Theta_T := \{ \bm{u} \in \mathbb{R}^\alpha \mid \bm{u} = \bm{x}_T, \bm{x} \in \Theta \}$. For any $ \bm{u}, \bm{v} \in \Omega_{T} \cap \Theta_{T} $, one has
	\begin{equation*}
		\begin{aligned}
			\|h(\bm{u}) - h(\bm{v})\| \leq \left\| \frac{|\bm{u}|^{\odot \frac{-1}{2}} \odot \mathrm{sgn}(\bm{u})}{\|\bm{u}\|_2^{1/2}} - \frac{|\bm{v}|^{ \odot \frac{-1}{2}}\odot \mathrm{sgn}(\bm{v})}{ \|\bm{v}\|_2^{1/2}} \right\| + \left\| \frac{\bm{u}\|\bm{u}\|_{1/2}^{1/2}}{ \|\bm{u}\|_2^{5/2}} 
			- \frac{\bm{v}\|\bm{v}\|_{1/2}^{1/2}}{ \|\bm{v}\|_2^{5/2}}  \right\|.
		\end{aligned}
	\end{equation*}
	Decompose $ h(\bm{u}) $ as $ h(\bm{u}) = \Phi(\bm{u}) - \Psi(\bm{u}) $, where $\Phi(\bm{u}) = \frac{|\bm{u}|^{\odot \frac{-1}{2} } \odot \mathrm{sgn}(\bm{u})}{\|\bm{u}\|_2^{\frac{1}{2}}}$ and $\Psi(\bm{u}) = \frac{\|\bm{u}\|_{1/2}^{1/2}}{\|\bm{u}\|_2^{5/2}} \bm{u}$. Note that each pair $ u_i $ and $ v_i $ share the same sign. Thus,
	\begin{equation*}
		\begin{aligned}
			|\Phi_i(\bm{u}) - \Phi_i(\bm{v})| \leq \left| \frac{|u_i|^{1/2} - |v_i|^{1/2}}{|v_i|^{1/2}|u_i|^{1/2}\|\bm{u}\|_2^{1/2}} \right| +  \left| \frac{\|\bm{u}\|_2^{1/2} - \|\bm{v}\|_2^{1/2}}{|v_i|^{1/2}\|\bm{u}\|_2^{1/2}\|\bm{v}\|_2^{1/2}} \right|.
		\end{aligned}
	\end{equation*}
	Since $ |u_i| \geq \epsilon $ and $ |v_i| \geq \epsilon $, combining with \cref{lem:unified_power_diff} leads to
	\begin{equation*}
		\begin{aligned}
			\left| \frac{|u_i|^{1/2} - |v_i|^{1/2}}{|u_i|^{1/2}|u_i|^{1/2}\|\bm{u}\|_2^{1/2}} \right| &\leq \frac{| |v_i| - |v_i| |}{2 \min(|u_i|, |v_i|)^{1/2} } \cdot \frac{1}{\epsilon \|\bm{u}\|_2^{1/2}}\leq \frac{| |u_i| - |v_i| |}{2 \epsilon^{3/2}  \|\bm{u}\|_2^{1/2}} 
		\end{aligned}
	\end{equation*}
	By $\|\bm{u}\|_2 = \left( \sum u_i^2 \right)^{1/2}$ and $|u_i| \geq \epsilon$, we know $\|\bm{u}\|_2^{1/2} \geq (n \epsilon^2)^{1/4} \geq n^{1/4} \epsilon^{1/2}$ and so $\left| \frac{|v_i|^{1/2} - |u_i|^{1/2}}{|u_i|^{1/2} |v_i|^{1/2} \|\bm{u}\|_2^{1/2}} \right| \leq \frac{|u_i - v_i|}{2 n^{1/4} \epsilon^2}.$
	On the other hand, according to \cref{lem:unified_power_diff}, one has
	\begin{equation}
		\begin{aligned}
			\left| \frac{\|\bm{u}\|_2^{1/2} - \|\bm{v}\|_2^{1/2}}{|u_i|^{1/2}\|\bm{u}\|_2^{1/2} \|\bm{v}\|_2^{1/2}} \right| 
			\leq \frac{1}{\epsilon^{3/2} n^{1/2}} \cdot \frac{\left|\|\bm{u}\|_2 - \|\bm{v}\|_2\right|}{2 \min(\|\bm{u}\|_2, \|\bm{v}\|_2)^{1/2}} \leq \frac{\|\bm{u} - \bm{v}\|_2}{2 n^{\frac{3}{4}} \epsilon^2}.
		\end{aligned}
	\end{equation}
	Thus, $\|\Phi(\bm{u}) - \Phi(\bm{v})\|_2 \leq \frac{\sqrt{n}}{2 n^{1/4} \epsilon^2} \|\bm{u} - \bm{v}\|_2 + \frac{n}{2 n^{3/4} \epsilon^2} \|\bm{u} - \bm{v}\|_2= \frac{n^{1/4}}{\epsilon^2} \|\bm{u} - \bm{v}\|_2.$
	Now we show the Lipschitz continuity of $ \Psi(\bm{u})$. In fact,
	\begin{equation}
		\begin{aligned}
			\|\Psi(\bm{u}) - \Psi(\bm{v})\|_2 &\leq \frac{\|\bm{u}\|_{1/2}^{1/2}}{\|\bm{u}\|_2^{5/2}} \|\bm{u} - \bm{v}\|_2 + \left| \frac{\|\bm{u}\|_{1/2}^{1/2}}{\|\bm{u}\|_2^{5/2}} - \frac{\|\bm{v}\|_{1/2}^{1/2}}{\|\bm{v}\|_2^{5/2}} \right| \|\bm{v}\|_2.
		\end{aligned}
	\end{equation}
	By the boundedness of $\|\bm{v}\|_2$, we can assume $\|\bm{v}\|_2 \leq M$. Since $ |u_i| \geq \epsilon $, it follows from \cref{lem:RatioBounds} that $\|\Psi(\bm{u}) - \Psi(\bm{v})\|_2 \overset{a}{\leq} \frac{1}{n^{1/4}\epsilon^2} \|\bm{u} - \bm{v}\|_2 + M\left| \frac{\|\bm{u}\|_{1/2}^{1/2}}{\|\bm{u}\|_2^{5/2}} - \frac{\|\bm{v}\|_{1/2}^{1/2}}{\|\bm{v}\|_2^{5/2}} \right|, $ where the inequality $ \overset{a}{\leq} $ follows from $\|\bm{u}\|_{1/2}^{1/2} \leq n^{3/4} \|\bm{u}\|_2^{1/2}$ and $\frac{\|\bm{u}\|_{1/2}^{1/2}}{\|\bm{u}\|_2^{5/2}} \leq \frac{n^{3/4}}{n \epsilon^2} = \frac{1}{n^{1/4} \epsilon^2}$. Moreover,
	\begin{equation*}
		\begin{aligned}
			&\left| \frac{\|\bm{u}\|_{1/2}^{1/2}}{\|\bm{u}\|_2^{5/2}} - \frac{\|\bm{v}\|_{1/2}^{1/2}}{\|\bm{v}\|_2^{5/2}} \right| 
			\leq \left| \frac{\|\bm{u}\|_{1/2}^{1/2}}{\|\bm{u}\|_2^{5/2}} - \frac{\|\bm{v}\|_{1/2}^{1/2}}{\|\bm{u}\|_2^{5/2}}\right| +  \left| \frac{\|\bm{v}\|_{2}^{5/2}}{\|\bm{u}\|_2^{5/2}\|\bm{v}\|_2^{5/2}} - \frac{\|\bm{u}\|_{2}^{5/2}}{\|\bm{v}\|_2^{5/2}\|\bm{u}\|_2^{5/2}} \right| \|\bm{v}\|_{1/2}^{1/2} \\
			&\leq \frac{1}{\|\bm{u}\|_2^{5/2}} \left| \|\bm{u}\|_{1/2}^{1/2} - \|\bm{v}\|_{1/2}^{1/2} \right| + n^{3/4}\|\bm{v}\|_2^{1/2} \left| \frac{\|\bm{v}\|_{2}^{5/2}}{\|\bm{u}\|_2^{5/2}\|\bm{v}\|_2^{5/2}} - \frac{\|\bm{u}\|_{2}^{5/2}}{\|\bm{v}\|_2^{5/2}\|\bm{u}\|_2^{5/2}} \right|.
		\end{aligned}
	\end{equation*}
	Using the facts $\|\bm{u}\|_2 \leq M$ and $\|\bm{v}\|_2 \leq M$, we have
	\begin{equation}
		\begin{aligned}
			\left| \frac{\|\bm{u}\|_{1/2}^{1/2}}{\|\bm{u}\|_2^{5/2}} - \frac{\|\bm{v}\|_{1/2}^{1/2}}{\|\bm{v}\|_2^{5/2}} \right| 
			& \leq \frac{1}{2n^{5/4} \epsilon^{3}} \sum |u_i - v_i| + \frac{n^{3/4} M^{1/2}}{n^{5/2} \epsilon^5} \cdot \frac{5}{2} M^{3/2} \|\bm{u} - \bm{v}\|_2 \\
			& = \left( \frac{\sqrt{n}}{2 n^{5/4} \epsilon^3} + \frac{5M^2}{2n^{7/4} \epsilon^5} \right) \|\bm{u} - \bm{v}\|_2
		\end{aligned}
	\end{equation}
	and so $\|\Psi(\bm{u}) - \Psi(\bm{v})\|_2 \leq \left( \frac{1}{n^{1/4} \epsilon^2} + M \left( \frac{\sqrt{n}}{2 n^{5/4} \epsilon^3} + \frac{5 M^2}{2 n^{7/4} \epsilon^5} \right) \right) \|\bm{u} - \bm{v}\|_2$.	Thus, combining the Lipschitz constant estimates for $ \Phi(\bm{u}) $ and $ \Psi(\bm{u}) $, one has $\|h(\bm{u}) - h(\bm{v})\|_2 \leq L_h\|\bm{u} - \bm{v}\|_2,$ where $L_h = \frac{1}{2}\left( \frac{n^{1/4}}{\epsilon^2} + \frac{1}{n^{1/4} \epsilon^2} + M \left( \frac{\sqrt{n}}{2 n^{5/4} \epsilon^3} + \frac{5 M^2}{2 n^{7/4} \epsilon^5} \right) \right)$. Consequently, for any $\bm{x} \in \mathcal{E} := \{\bm{x} \in \mathbb{R}^n \mid \bm{x} \in \Omega_{\bm{x}} \cap \mathcal{N}(\bm{x}^*),\;\mathcal{H}(\bm{x}^*) < \mathcal{H}(\bm{x}) < \mathcal{H}(\bm{x}^*) + \varrho\}$, with $\mathcal{N}(\bm{x}^*)$ being a neighborhood of $\bm{x}^*$ such that $\mathcal{N}(\bm{x}^*) \subseteq \Theta$, we have $\text{dist}^2(\bm{0}, \partial \mathcal{H}(\bm{x})) = \inf_{\substack{\bm{d} \in \partial \mathcal{H}(\bm{x})}} \|\bm{d}\|_2^2 \geq \inf_{\bm{d} \in \partial \mathcal{H}(\bm{x})} \|\bm{d}_T\|_2^2 = \|\nabla \bar{\mathcal{H}}_T(\bm{x}_T)\|_2^2$.
	
	In summary, $\mathcal{H}$ satisfies the K\L\ property on $\Omega$ with an exponent of $1/2$. The remaining part of the proof follows along the same lines as \cite[Theorem 5.9]{tao2022minimization}, and is omitted here for brevity.
\end{proof}

\section{Numerical experiments}\label{sec:experiments}
In this section, we provide some numerical experiments to indicate our method consistently outperforms the existing ones.
  
\subsection{Experimental Setup}

In this subsection, consider a series of numerical experiments on a Windows 11 computing platform equipped with a 12th Gen Intel(R) Core i9-12900H processor (2.5 GHz) and 16 GB RAM, to evaluate the performance of $\ell_{1/2}/\ell_2$ and its ADMM algorithmic framework for sparse recovery. All algorithms are implemented in MATLAB and executed under version R2022b. The proposed $\ell_{1/2}/\ell_2$ is benchmarked against several state-of-the-art approach, including $\ell_1$, $\ell_{1/2}$, $\text{IRLS}\ell_{p}$ \cite{lai2013improved}, $\ell_1-\alpha\ell_2$, $\ell_1/\ell_2$, and $\ell_1/\ell_\infty$. The sparse recovery performance of $\ell_{1/2}/\ell_2$ is examined under two types of sensing matrices: random Gaussian and oversampled discrete cosine transform (DCT). The random Gaussian sensing matrix is generated from the distribution $\mathcal{N}(0, \bm{\Sigma})$, where $\Sigma = \{(1-r)\bm{I}_n (i = j) + r\}_{i,j}$ . For the oversampled DCT matrix $\bm{A} = [\bm{a}_1, \bm{a}_2, \ldots, \bm{a}_n] \in \mathbb{R}^{m \times n}$, the $i$-th column vector $\bm{a}_i$ is defined as $\bm{a}_i = \frac{1}{\sqrt{m}} \cos\left( \frac{2i\pi\bm{\omega}}{F} \right),$ where $F > 0$ denotes the coherence control parameter. The vector $\bm{\omega} \in \mathbb{R}^m$ is drawn from the uniform distribution $\mathcal{U}([0,1]^m)$. 

The initial points are chosen as follows: the zero vector for $\ell_1$; the $\ell_1$ solution for $\ell_1-\alpha\ell_2$ (recommended by \cite{lou2018fast}); the pseudoinverse solution for $\text{IRLS}\ell_{p}$ (recommended by \cite{lai2013improved}); and the DCA solution of $\ell_1-\ell_2$ for the remaining models. The regularization parameter $\zeta$ is fixed at $10^{-5}$ for all models, except for $\ell_{1/2}$, in which it is determined adaptively. The stopping criterion for all algorithms is set to $\frac{\|\bm{x}_k - \bm{x}_{k-1}\|_2}{\|\bm{x}_k\|_2} < 10^{-8}$, with a maximum of $5n$ iterations allowed. All other parameters are set to the default values for the corresponding models. For $\text{IRLS}\ell_{p}$, the parameter $p$ is fixed at $1/2$. The sensing matrix $\bm{A}$ is fixed at a dimension of $64 \times 512$. Additionally, we employ the strategies from \cite{boyd2011distributed} and \cite{ding2019alphaL1} to dynamically adjust the penalty parameters $\rho$ and $\gamma$ during the iterations. Let $\bm{x}^*$ denote the reconstructed sparse solution for the ground-truth signal $\bm{x}$. Following the evaluation strategy in \cite{rahimi2019scale}, we employ three metrics: success rate, model failure rate, and algorithm failure rate. A recovery is considered successful when $\frac{\|\bm{x} - \bm{x}^*\|_2}{\|\bm{x}\|_2} \leq 10^{-3}$. If $\mathcal{H}(\bm{x}^*) > \mathcal{H}(\bm{x})$, the trial is classified as an algorithm failure. Conversely, if $\mathcal{H}(\bm{x}^*) < \mathcal{H}(\bm{x})$, it is classified as a model failure.

\subsection{Recovery of Noiseless Sparse Vectors}
In this subsection, consider a systematic evaluation of the recovery performance of various sparse optimization models under two representative test scenarios: (1) the effect of different $F\in \{5, 10, 15, 20\}$ with oversampled DCT matrices , and (2) the effect of different parameters $r\in \{0.2, 0.4, 0.6, 0.8\}$ with Gaussian matrices. In \cref{fig:DCT_FSuccessAggorithmModel} and \cref{fig:Gauss_RSuccessAggorithmModel}, the notation $\mathrm{ADMM}^{\mathrm{weighted}}$ indicates that the weight parameter $\alpha$ is updated dynamically. Similarly, the $\mathrm{ADMM}_w$ and $\mathrm{FB}_w$ in Subsection \ref{sec:noise} adopt the same adaptive strategy. For each condition, every algorithm is independently executed 50 times. The experimental results show that $\ell_{1/2}/\ell_2$ consistently exhibits superior sparsity-promoting capability, robustness, and stability across different scenarios. In the oversampled DCT case, the model achieves a significantly higher success rate than the methods mentioned above in the low-to-moderate sparsity range ($\text{Sparsity} < 20$), displays a much slower decline as sparsity increases, and maintains an excellent performance even when $F = 20$. Moreover, our model and algorithm failure rates remain low across different $F$ values, which demonstrates low sensitivity to $F$ variation. Under Gaussian matrices, the performance remains similarly stable across different $r$ values. Even at $r = 0.8$, our model sustains a high success rate, with failure rates increasing only slightly. Overall, the $\ell_{1/2}/\ell_2$ outperforms all the methods mentioned above in the sparsity-promoting and the adaptability as well as the robustness under different matrix structures and parameter variations. 
\begin{figure}[!htbp]
	\centering
	\setlength{\abovecaptionskip}{0pt}    
	\includegraphics[scale=0.25]{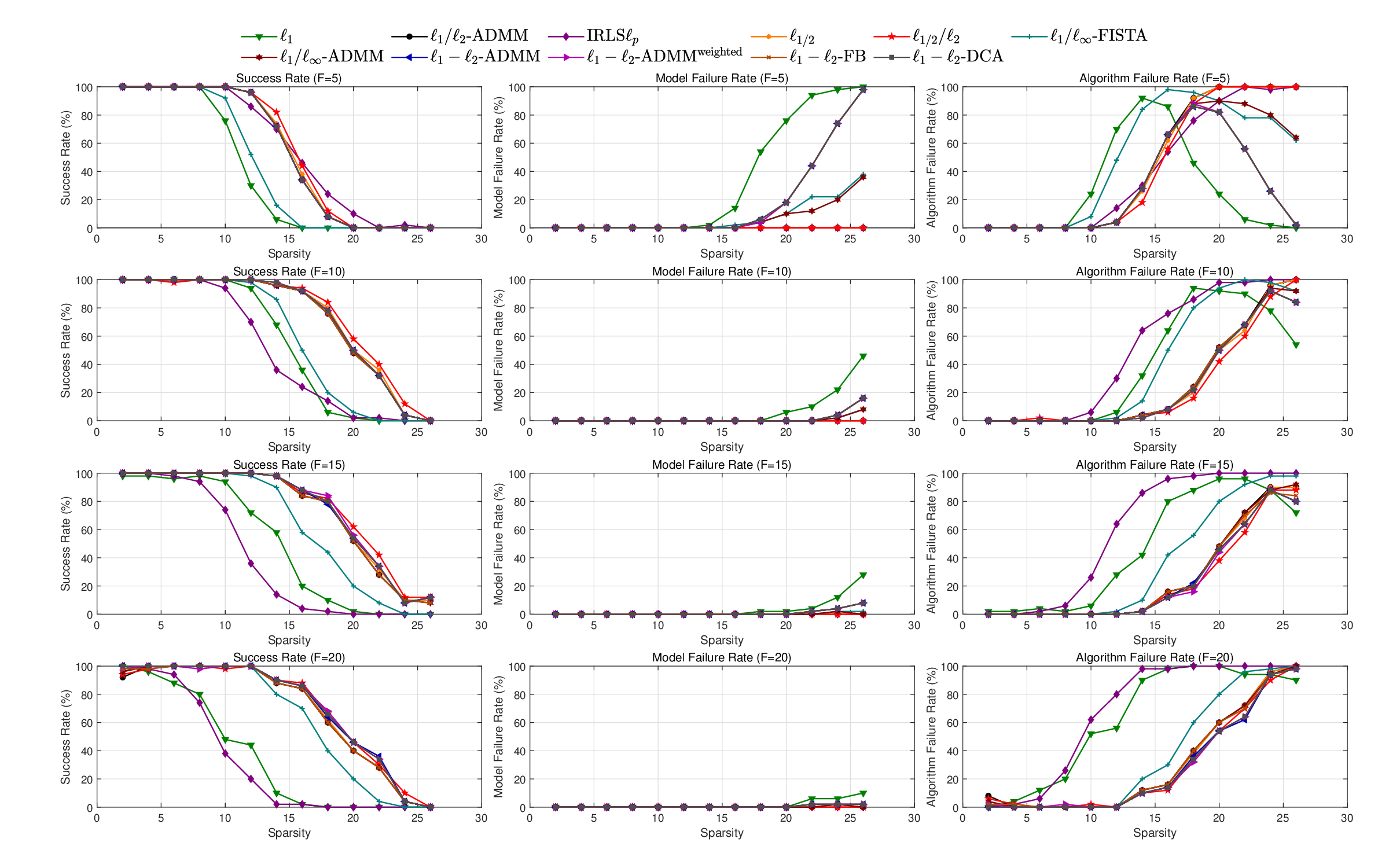} 
	\caption{Comparison of success rate, algorithm failure rate, and model failure rate of various models under different $F$ values with oversampled DCT matrices.}
	\label{fig:DCT_FSuccessAggorithmModel}
\end{figure}

\begin{figure}[!htbp]
	\centering
	\setlength{\abovecaptionskip}{0pt}  
	\includegraphics[scale=0.25]{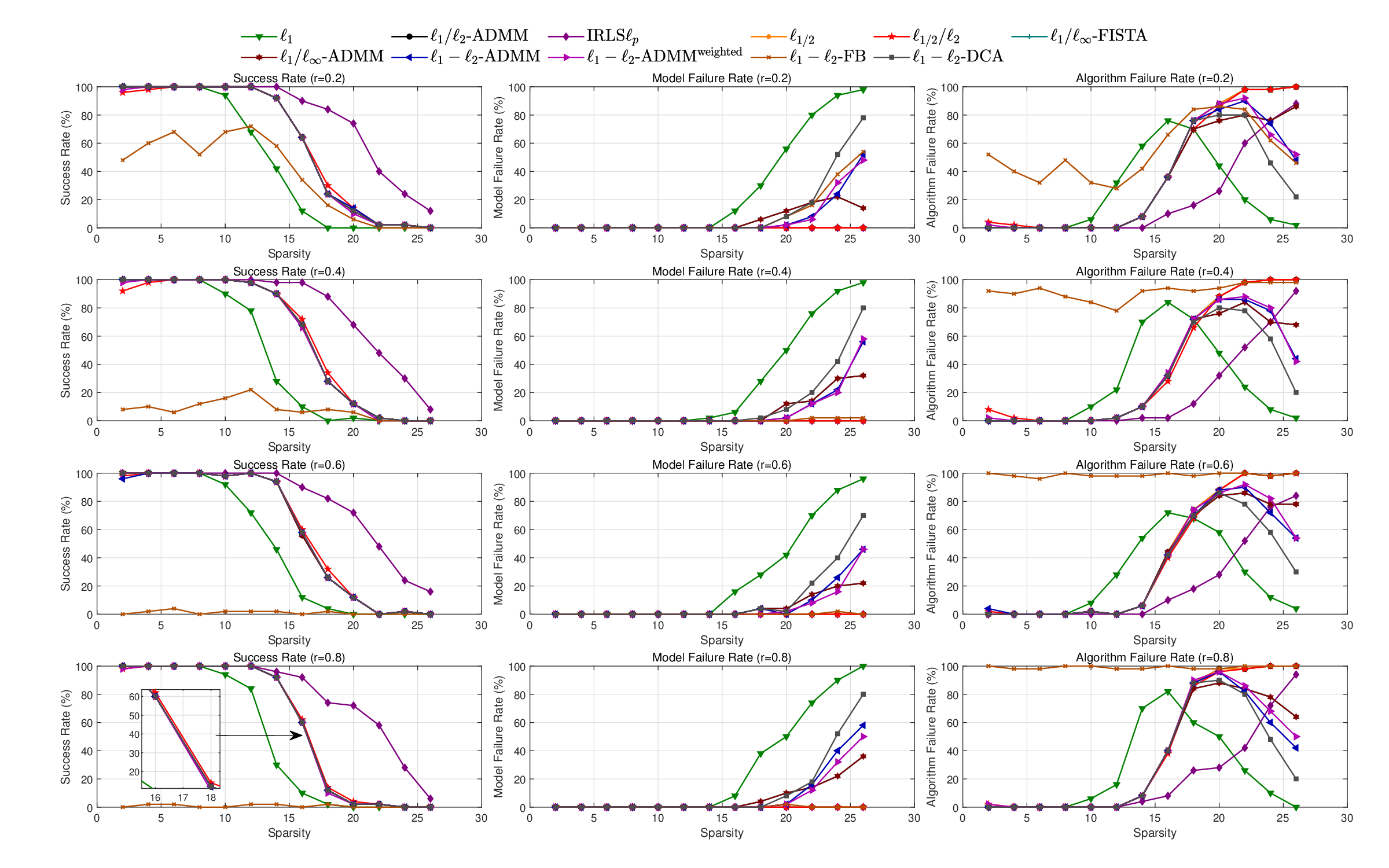} 
	\caption{Comparison of success rate, algorithm failure rate, and model failure rate of various models under different $r$ values with Gaussian matrices.}
	\label{fig:Gauss_RSuccessAggorithmModel}
\end{figure}

\subsection{Recovery of Noise Sparse Vectors}\label{sec:noise}
In this subsection, we evaluate the sparse recovery performance of the $\ell_{1/2}/\ell_2$ model under noisy conditions. The quality of recovery is measured by the signal-to-noise ratio (SNR, unit: dB) of reconstructed solution $\bm{x}^{\ast}$, defined as $\mathrm{SNR}(\bm{x}, \bm{x}^*) = 10 \log_{10} \left( \frac{\|\bm{x}^{\ast} - \bm{x}\|_2^2}{\|\bm{x}\|_2^2} \right).$ This strategy has also been adopted in the work of \cite{yin2015minimization}. For the oversampled DCT matrix, the sparsity level is set to $s = 15$, the minimum separation is $L = 15$, $F \in \{5, 10, 15, 20\}$, and $\zeta$ is fixed at $8 \times 10^{-4}$. For the Gaussian matrix, the sparsity is set to $s = 10$, the minimum separation is $L = 15$, $r \in \{0.2, 0.4, 0.6, 0.8\}$, and $\zeta$ is set to $8 \times 10^{-3}$.  Each experimental configuration is repeated 50 times. Considering the superior performance of the IRLS$\ell_p$ model under Gaussian measurements, for all models except $\ell_1$ and IRLS$\ell_p$, the initial point is chosen as the solution obtained by the IRLS$\ell_p$. For the oversampled DCT matrix, the initial point is selected as the solution from the DCA algorithm applied to the $\ell_1$-$\ell_2$. For the $\ell_{1/2}$ and IRLS$\ell_p$, the true sparsity $s$ is used as the estimated sparsity. 
\begin{table}[!htbp]
	\centering
	\caption{Average rankings of different sparse optimization models under over-sampled DCT and Gaussian matrices with various parameters. The rankings are obtained from 50 independent trials for each setting.}
	\label{tab:ranking}
	\resizebox{\linewidth}{!}{
		\begin{tabular}{cccccccccccccc} 
			\toprule
			& \begin{tabular}[c]{@{}c@{}}Over Sampled \\ DCT matrix\end{tabular} & $\ell_1$     & IRLS$\ell_{p}$   & \begin{tabular}[c]{@{}c@{}}$\ell_1/\ell_{2}$ \\ ADMM\end{tabular} & $\ell_p$    & \begin{tabular}[c]{@{}c@{}}$\ell_{1}/\ell_{\infty}$ \\ FISTA \end{tabular}  & \begin{tabular}[c]{@{}c@{}}$\ell_{1}/\ell_{\infty}$ \\ ADMM \end{tabular} & $\ell_{1/2}/\ell_{2}$ & \begin{tabular}[c]{@{}c@{}}$\ell_{1}-\alpha\ell_{2}$ \\ ADMM\end{tabular}  & \begin{tabular}[c]{@{}c@{}}$\ell_{1}-\alpha\ell_{2}$ \\ $\text{ADMM}_w$\end{tabular} & \begin{tabular}[c]{@{}c@{}}$\ell_{1}-\alpha\ell_{2}$ \\ FB \end{tabular} & \begin{tabular}[c]{@{}c@{}}$\ell_{1}-\ell_{2}$ \\ DCA \end{tabular} & \begin{tabular}[c]{@{}c@{}}$\ell_{1}-\alpha\ell_{2}$ \\ $\text{FB}_w$ \end{tabular} \\ 
			\hline
			\multirow{4}{*}{\begin{tabular}[c]{@{}c@{}}$s=15$\\$L=15$\\$\text{SNR}=45 \text{dB}$\end{tabular}} & $F=5$              & 11.08  & 4.42   & 8.22        & 3.7   & 9.88              & 7.34             & \textbf{2.24}      & 4.52      & 4.82              & 5.32    & 5.6      & 10.86            \\
			& $F=10$             & 10.42  & 11.42  & 7.36        & 4.02  & 9.4               & 6.78             & \textbf{1.56}      & 3.62      & 4.04              & 4.56    & 5.04     & 9.78             \\
			& $F=15$             & 10.3   & 12.00~ & 6.34        & 5.32  & 8.76              & 6.00~            & \textbf{3.04}      & 3.54      & 4.18              & 4.56    & 4.8      & 9.16             \\
			& $F=20$             & 9.78   & 12.00~ & 5.34        & 6.48  & 7.84              & 5.56             & \textbf{3.94}      & 4.32      & 4.64              & 4.8     & 4.8      & 8.5              \\ 
			\cline{2-14}
			\multirow{4}{*}{\begin{tabular}[c]{@{}c@{}}$s=15$\\$L=15$\\$\text{SNR}=50 \text{dB}$\end{tabular}} & $F=5$              & 10.88~ & 4.04~  & 8.16~       & 3.26~ & 10.14~            & 7.74~            & \textbf{2.04}~     & 4.70~     & 5.32~             & 5.34~   & 5.54~    & 10.84~           \\
			& $F=10$             & 10.16~ & 11.54~ & 7.68~       & 3.44~ & 9.02~             & 6.96~            & \textbf{1.16}~     & 4.10~     & 4.20~             & 4.48~   & 4.72~    & 10.54~           \\
			& $F=15$             & 9.80~  & 11.82~ & 6.86~       & 5.14~ & 7.96~             & 6.10~            & \textbf{2.30}~     & 4.22~     & 4.52~             & 4.68~   & 4.72~    & 9.88~            \\
			& $F=20$             & 9.52~  & 12.00~ & 5.60~       & 6.14~ & 6.90~             & 5.60~            & \textbf{3.46}~     & 4.32~     & 4.74~             & 5.00~   & 5.18~    & 9.54~            \\ 
			\cline{2-14}
			\multirow{4}{*}{\begin{tabular}[c]{@{}c@{}}$s=15$\\$L=15$\\$\text{SNR}=55 \text{dB}$\end{tabular}} & $F=5$              & 10.78~ & 4.04~  & 8.22~       & 3.30~ & 10.12~            & 7.94~            & \textbf{1.48}~     & 5.16~     & 5.24~             & 5.46~   & 5.48~    & 10.78~           \\
			& $F=10$             & 10.12~ & 11.56~ & 7.56~       & 3.34~ & 8.98~             & 6.90~            & \textbf{1.04}~     & 4.14~     & 4.22~             & 4.76~   & 4.68~    & 10.70~           \\
			& $F=15$             & 10.04~ & 11.82~ & 6.70~       & 5.00~ & 7.78~             & 6.18~            & \textbf{2.42}~     & 4.32~     & 4.46~             & 4.36~   & 4.64~    & 10.28~           \\
			& $F=20$             & 9.26~  & 11.78~ & 5.56~       & 5.78~ & 6.78~             & 5.42~            & \textbf{3.82}~     & 4.42~     & 4.82~             & 5.12~   & 5.28~    & 9.96~            \\ 
			\hline
			& Gaussian matrix         & $\ell_1$     & IRLS$\ell_{p}$   & \begin{tabular}[c]{@{}c@{}}$\ell_1/\ell_{2}$ \\ ADMM\end{tabular} & $\ell_p$    & \begin{tabular}[c]{@{}c@{}}$\ell_{1}/\ell_{\infty}$ \\ FISTA \end{tabular}  & \begin{tabular}[c]{@{}c@{}}$\ell_{1}/\ell_{\infty}$ \\ ADMM \end{tabular} & $\ell_{1/2}/\ell_{2}$ & \begin{tabular}[c]{@{}c@{}}$\ell_{1}-\alpha\ell_{2}$ \\ ADMM\end{tabular}  & \begin{tabular}[c]{@{}c@{}}$\ell_{1}-\alpha\ell_{2}$ \\ $\text{ADMM}_w$\end{tabular} & \begin{tabular}[c]{@{}c@{}}$\ell_{1}-\alpha\ell_{2}$ \\ FB \end{tabular} & \begin{tabular}[c]{@{}c@{}}$\ell_{1}-\ell_{2}$ \\ DCA \end{tabular} & \begin{tabular}[c]{@{}c@{}}$\ell_{1}-\alpha\ell_{2}$ \\ $\text{FB}_w$ \end{tabular}  \\ 
			\hline
			\multirow{4}{*}{\begin{tabular}[c]{@{}c@{}}$s=10$\\$L=15$\\$\text{SNR}=45 \text{dB}$\end{tabular}} & $r=0.2$            & 5.68~  & \textbf{1.70}~  & 4.40~       & 5.36~ & 9.86~             & 3.14~            & \textbf{1.70}~     & 7.68~     & 7.78~             & 11.00~  & 7.70~    & 12.00~           \\
			& $r=0.4$            & 6.80~  & 1.66~  & 4.04~       & 5.16~ & 9.84~             & 2.96~            & \textbf{1.60}~     & 7.56~     & 7.80~             & 11.00~  & 7.58~    & 12.00~           \\
			& $r=0.6$            & 6.80~  & \textbf{1.56}~  & 4.16~       & 5.12~ & 9.78~             & 2.80~            & 1.92~     & 7.58~     & 7.78~             & 11.00~  & 7.50~    & 12.00~           \\
			& $r=0.8$            & 7.32~  & 2.44~  & 4.22~       & 4.52~ & 9.66~             & 2.32~            & \textbf{1.84}~     & 7.58~     & 7.54~             & 11.00~  & 7.56~    & 12.00~           \\ 
			\cline{2-14}
			\multirow{4}{*}{\begin{tabular}[c]{@{}c@{}}$s=10$\\$L=15$\\$\text{SNR}=50 \text{dB}$\end{tabular}} & $r=0.2$            & 5.34~  & 1.96~  & 4.66~       & 5.18~ & 9.76~             & 3.28~            & \textbf{1.86}~     & 7.60~     & 7.70~             & 11.00~  & 7.66~    & 12.00~           \\
			& $r=0.4$            & 6.52~  & \textbf{1.82}~  & 4.20~       & 4.96~ & 9.72~             & 2.96~            & 2.26~     & 7.52~     & 7.64~             & 11.00~  & 7.40~    & 12.00~           \\
			& $r=0.6$            & 6.68~  & \textbf{2.20}~  & 4.14~       & 5.04~ & 9.72~             & 2.50~            & 2.52~     & 7.30~     & 7.42~             & 11.00~  & 7.48~    & 12.00~           \\
			& $r=0.8$            & 7.06~  & 2.98~  & 4.16~       & 4.12~ & 9.76~             & \textbf{2.30}~            & 2.40~     & 7.18~     & 7.68~             & 11.00~  & 7.36~    & 12.00~           \\ 
			\cline{2-14}
			\multirow{4}{*}{\begin{tabular}[c]{@{}c@{}}$s=10$\\$L=15$\\$\text{SNR}=55 \text{dB}$\end{tabular}} & $r=0.2$            & 5.54~  & 2.24~  & 5.06~       & 4.84~ & 9.62~             & 3.18~            & \textbf{2.08}~     & 7.64~     & 7.36~             & 11.00~  & 7.44~    & 12.00~           \\
			& $r=0.4$            & 6.40~  & \textbf{2.12}~  & 4.50~       & 4.64~ & 9.58~             & 2.68~            & 2.94~     & 7.46~     & 7.24~             & 11.00~  & 7.44~    & 12.00~           \\
			& $r=0.6$            & 6.70~  & 2.82~  & 4.34~       & 4.32~ & 9.70~             & \textbf{2.22}~            & 2.48~     & 7.50~     & 7.42~             & 11.00~  & 7.50~    & 12.00~           \\
			& $r=0.8$            & 7.94~  & 3.96~  & 4.46~       & 3.22~ & 9.44~             & 3.08~            & \textbf{2.86}~     & 6.60~     & 6.60~             & 11.00~  & 6.84~    & 12.00~           \\
			\bottomrule
	\end{tabular}}
\end{table}

\cref{tab:ranking}, which reports the average rankings, $\ell_{1/2}/\ell_{2}$ consistently achieves lower ranks across most experimental configurations, particularly under the over-sampled DCT matrix setting. For instance, when the SNR are 45 dB and 50 dB, $\ell_{1/2}/\ell_{2}$ frequently ranks among the top two across different oversampling factors $F$, which demonstrates the stronger robustness against noise. In the case of Gaussian matrices, although the overall differences between methods are smaller, $\ell_{1/2}/\ell_{2}$ still achieves leading positions at multiple sparsity levels, which indicates the better adaptability to varying measurement structures.  

\cref{tab:winning}, which summarizes the winning counts, further confirms the superiority of $\ell_{1/2}/\ell_{2}$. The term ``winning'' means that, the winner is $\ell_{1/2}/\ell_{2}$ if $\text{SNR}_{\ell_{1/2}/\ell_{2}} < \text{SNR}_{M_c}$, and $M_c$ otherwise, where $M_c$ denotes other model mentioned above. Since the SNR reflects reconstruction error, a lower SNR value corresponds to a smaller relative reconstruction error and thus indicates better recovery performance. Across various experimental settings, our model substantially outperforms most competing methods. Under the over-sampled DCT matrix, $\ell_{1/2}/\ell_{2}$ attains significantly higher winning counts, especially at medium-to-high SNR levels (50 dB and 55 dB), highlighting its strong sparse recovery capability in low-noise scenarios. In the Gaussian matrix case, the model maintains consistently competitive winning counts.  

Overall, the results from both average ranking and winning counts demonstrate the consistent advantage of $\ell_{1/2}/\ell_{2}$. These findings indicate that $\ell_{1/2}/\ell_{2}$ not only achieves superior reconstruction accuracy but also maintains stability across different noise levels and measurement structures.
 
\begin{table}
	\centering
	\caption{Winning counts of different sparse optimization models under over-sampled DCT and Gaussian matrices with various parameters.}
	\label{tab:winning}
	\resizebox{\linewidth}{!}{
		\begin{tabular}{cccccccccccccc} 
			\toprule
			& \begin{tabular}[c]{@{}c@{}}Over Sampled \\ DCT matrix\end{tabular} & L1     & IRLS$\ell_{p}$   & \begin{tabular}[c]{@{}c@{}}$\ell_1/\ell_{2}$ \\ ADMM\end{tabular} & $\ell_p$    & \begin{tabular}[c]{@{}c@{}}$\ell_{1}/\ell_{\infty}$ \\ FISTA \end{tabular}  & \begin{tabular}[c]{@{}c@{}}$\ell_{1}/\ell_{\infty}$ \\ ADMM \end{tabular} & $\ell_{1/2}/\ell_{2}$ & \begin{tabular}[c]{@{}c@{}}$\ell_{1}-\alpha\ell_{2}$ \\ ADMM\end{tabular}  & \begin{tabular}[c]{@{}c@{}}$\ell_{1}-\alpha\ell_{2}$ \\ $\text{ADMM}_w$\end{tabular} & \begin{tabular}[c]{@{}c@{}}$\ell_{1}-\alpha\ell_{2}$ \\ FB \end{tabular} & \begin{tabular}[c]{@{}c@{}}$\ell_{1}-\ell_{2}$ \\ DCA \end{tabular} & \begin{tabular}[c]{@{}c@{}}$\ell_{1}-\alpha\ell_{2}$ \\ $\text{FB}_w$ \end{tabular} \\ 
			\hline
			\multirow{4}{*}{\begin{tabular}[c]{@{}c@{}}$s=15$\\$L=15$\\$\text{SNR}=45 \text{dB}$\end{tabular}} & $F=5$                                                        & 0~  & 31~  & 3~          & 5~  & 2~                & 3~               & 488~      & 4~        & 5~                & 4~      & 4~       & 1~               \\
			& $F=10$                                                       & 0~  & 1~   & 3~          & 3~  & 3~                & 3~               & 522~      & 4~        & 3~                & 3~      & 3~       & 2~               \\
			& $F=15$                                                       & 1~  & 0~   & 14~         & 10~ & 5~                & 12~              & 448~      & 15~       & 15~               & 12~     & 12~      & 6~               \\
			& $F=20$                                                       & 4~  & 0~   & 19~         & 11~ & 10~               & 18~              & 403~      & 21~       & 20~               & 17~     & 17~      & 10~              \\ 
			\cline{2-14}
			\multirow{4}{*}{\begin{tabular}[c]{@{}c@{}}$s=15$\\$L=15$\\$\text{SNR}=50 \text{dB}$\end{tabular}} & $F=5$                                                        & 2~  & 19~  & 4~          & 6~  & 1~                & 3~               & 498~      & 4~        & 4~                & 4~      & 4~       & 1~               \\
			& $F=10$                                                       & 0~  & 1~   & 0~          & 1~  & 2~                & 0~               & 542~      & 1~        & 1~                & 1~      & 1~       & 0~               \\
			& $F=15$                                                       & 3~  & 1~   & 7~          & 6~  & 7~                & 7~               & 485~      & 8~        & 8~                & 7~      & 7~       & 4~               \\
			& $F=20$                                                       & 5~  & 0~   & 13~         & 10~ & 14~               & 13~              & 427~      & 16~       & 18~               & 14~     & 14~      & 6~               \\ 
			\cline{2-14}
			\multirow{4}{*}{\begin{tabular}[c]{@{}c@{}}$s=15$\\$L=15$\\$\text{SNR}=55 \text{dB}$\end{tabular}} & $F=5$                                                        & 2~  & 8~   & 1~          & 5~  & 1~                & 1~               & 526~      & 1~        & 1~                & 1~      & 1~       & 2~               \\
			& $F=10$                                                       & 0~  & 0~   & 0~          & 1~  & 1~                & 0~               & 548~      & 0~        & 0~                & 0~      & 0~       & 0~               \\
			& $F=15$                                                       & 1~  & 1~   & 8~          & 7~  & 9~                & 8~               & 479~      & 9~        & 10~               & 8~      & 8~       & 2~               \\
			& $F=20$                                                       & 7~  & 1~   & 16~         & 12~ & 15~               & 16~              & 409~      & 18~       & 20~               & 16~     & 16~      & 4~               \\ 
			\hline
			& Gaussian matrix         & L1     & IRLS$\ell_{p}$   & \begin{tabular}[c]{@{}c@{}}$\ell_1/\ell_{2}$ \\ ADMM\end{tabular} & $\ell_p$    & \begin{tabular}[c]{@{}c@{}}$\ell_{1}/\ell_{\infty}$ \\ FISTA \end{tabular}  & \begin{tabular}[c]{@{}c@{}}$\ell_{1}/\ell_{\infty}$ \\ ADMM \end{tabular} & $\ell_{1/2}/\ell_{2}$ & \begin{tabular}[c]{@{}c@{}}$\ell_{1}-\alpha\ell_{2}$ \\ ADMM\end{tabular}  & \begin{tabular}[c]{@{}c@{}}$\ell_{1}-\alpha\ell_{2}$ \\ $\text{ADMM}_w$\end{tabular} & \begin{tabular}[c]{@{}c@{}}$\ell_{1}-\ell_{2}$ \\ FB \end{tabular} & \begin{tabular}[c]{@{}c@{}}$\ell_{1}-\alpha\ell_{2}$ \\ DCA \end{tabular} & \begin{tabular}[c]{@{}c@{}}$\ell_{1}-\alpha\ell_{2}$ \\ $\text{FB}_w$ \end{tabular} \\ 
			\hline
			\multirow{4}{*}{\begin{tabular}[c]{@{}c@{}}$s=10$\\$L=15$\\$\text{SNR}=45 \text{dB}$\end{tabular}} & $r=0.2$                                                      & 4~  & 17~  & 2~          & 2~  & 2~                & 2~               & 515~      & 2~        & 2~                & 0~      & 2~       & 0~               \\
			& $r=0.4$                                                      & 0~  & 17~  & 2~          & 2~  & 1~                & 2~               & 520~      & 2~        & 2~                & 0~      & 2~       & 0~               \\
			& $r=0.6$                                                      & 2~  & 23~  & 3~          & 3~  & 1~                & 11~              & 504~      & 1~        & 1~                & 0~      & 1~       & 0~               \\
			& $r=0.8$                                                      & 3~  & 12~  & 2~          & 6~  & 1~                & 12~              & 508~      & 2~        & 2~                & 0~      & 2~       & 0~               \\ 
			\cline{2-14}
			\multirow{4}{*}{\begin{tabular}[c]{@{}c@{}}$s=10$\\$L=15$\\$\text{SNR}=50 \text{dB}$\end{tabular}} & $r=0.2$                                                      & 5~  & 15~  & 4~          & 3~  & 3~                & 4~               & 507~      & 3~        & 3~                & 0~      & 3~       & 0~               \\
			& $r=0.4$                                                      & 5~  & 16~  & 6~          & 6~  & 5~                & 7~               & 487~      & 6~        & 6~                & 0~      & 6~       & 0~               \\
			& $r=0.6$                                                      & 5~  & 15~  & 7~          & 7~  & 5~                & 16~              & 474~      & 7~        & 7~                & 0~      & 7~       & 0~               \\
			& $r=0.8$                                                      & 3~  & 13~  & 8~          & 10~ & 5~                & 13~              & 480~      & 6~        & 6~                & 0~      & 6~       & 0~               \\ 
			\cline{2-14}
			\multirow{4}{*}{\begin{tabular}[c]{@{}c@{}}$s=10$\\$L=15$\\$\text{SNR}=55 \text{dB}$\end{tabular}} & $r=0.2$                                                      & 6~  & 11~  & 5~          & 5~  & 5~                & 7~               & 496~      & 5~        & 5~                & 0~      & 5~       & 0~               \\
			& $r=0.4$                                                      & 11~ & 14~  & 10~         & 10~ & 10~               & 12~              & 453~      & 10~       & 10~               & 0~      & 10~      & 0~               \\
			& $r=0.6$                                                      & 6~  & 13~  & 7~          & 7~  & 7~                & 13~              & 476~      & 7~        & 7~                & 0~      & 7~       & 0~               \\
			& $r=0.8$                                                      & 4~  & 9~   & 13~         & 14~ & 8~                & 9~               & 457~      & 12~       & 12~               & 0~      & 12~      & 0~               \\
			\bottomrule
	\end{tabular}}
\end{table}

\subsection{Structural Sparsification of RVFL via $\ell_{1/2}/\ell_2$}

In this subsection, to verify the generalization capability and sparse representation advantage of $\ell_{1/2}/\ell_{2}$ in practical machine learning tasks, we apply it to the structural sparsification of Random Vector Functional Link (RVFL) network.

The RVFL network, first proposed by Pao \textit{et al.} \cite{pao1994learning}, can be regarded as a variant of the single-layer feedforward neural network (SLFN). Given a training dataset $ \mathcal{\bm{D}}=\{(\bm{x}_i, y_i)\}_{i=1}^N, \bm{x}_i \in \mathbb{R}^d, \; y_i \in \mathbb{R}^m, $ the output of the RVFL network for an input $\bm{x}_i$ is expressed as $ \hat{y}_i = \sum_{j=1}^L \beta_j a(\bm{w}_j^\top \bm{x}_i + b_j) + \bm{\tilde{v}}^\top \bm{x}_i, $ where $L$ denotes the number of hidden nodes, $\bm{w}_j \in \mathbb{R}^d$ and $b_j \in \mathbb{R}$ are, respectively, randomly initialized weights and biases that remain fixed during training, $a(\cdot)$ is a nonlinear activation function, $\bm{\beta} \in \mathbb{R}^L$ are the output weights to be learned, and $\bm{\tilde{v}} \in \mathbb{R}^d$ corresponds to the direct link from the input. By stacking the hidden-layer outputs and the direct-link components for all training samples, the enhanced feature matrix is $\bm{H} = \left[a(\bm{X}\bm{W} + \bm{b}\mathbf{1}^\top)  \bm{X} \right]\in \mathbb{R}^{N \times (L+d)}$, where $\bm{X} = [\bm{x}_1, \dots, \bm{x}_N]^\top \in \mathbb{R}^{N \times d}$, $\bm{W} = [\bm{w}_1, \dots, \bm{w}_L] \in \mathbb{R}^{d \times L}$, $\bm{b} = [b_1, \dots, b_L]^\top$. Thus, the training of RVFL can be formulated as $\min_{\bm{\beta}} \; \|\bm{H\beta} - \bm{Y}\|_2^2 + \lambda R(\bm{\beta})$, where $\bm{Y}=[y_1^\top, \ldots, y_N^\top]^\top \in \mathbb{R}^{N \times m}$ is the target output matrix, $\bm{\beta} \in \mathbb{R}^{(L+d)\times m}$ is the output weight, $\lambda>0$ is the regularization parameter, and $R(\cdot)$ is the regularization term. Let $R(\bm{\beta}) = \|\bm{\beta}\|_{1/2}^{1/2}/\|\bm{\beta}\|_2^{1/2},$ which simultaneously preserves scale stability and enforces sparsity in the weight vector. This mechanism automatically identifies and suppresses hidden neurons with negligible contributions, which leads to a more compact sparse RVFL model.

The first five benchmark datasets in \cref{tab:regresssion} are from the OpenML repository\footnote{\url{https://www.openml.org/}} and the remains are from the StatLib archive\footnote{\url{https://lib.stat.cmu.edu/datasets/}}.  The test set mean squared error (MSE) results are reported in \cref{tab:regresssion} for the benchmark datasets under the selected parameters.  
\cref{tab:regresssion} indicates that the $\ell_{1/2}/\ell_2$ regularized RVFL model has superior performance across multiple benchmark datasets. $\ell_{1/2}/\ell_2$ achieves a superior balance between sparsity and predictive accuracy, substantially improving the robustness and practical applicability of RVFL model, and thereby validating its effectiveness in structured sparse learning.
\begin{table}[!htbp]
	\centering
	\caption{Mean squared error (MSE) on the test set for each benchmark dataset, with regularization parameters selected via three-fold cross-validation.}
	\label{tab:regresssion}
	\resizebox{\linewidth}{!}{
		\begin{tabular}{cccccccccccc} 
			\toprule
			Datasets                                                                        & RVFL-$\ell_2$       & RVFL-$\ell_{1}$         & \begin{tabular}[c]{@{}c@{}}RVFL-$\ell_{1}-\ell_{2}$ \\ DCA\end{tabular}  & IRLS$\ell_{p}$     & \begin{tabular}[c]{@{}c@{}}RVFL-$\ell_{1}-\ell_{2}$ \\ ADMM\end{tabular} & \begin{tabular}[c]{@{}c@{}}RVFL-$\ell_{1}-\ell_{2}$ \\ ADMMweighted\end{tabular} & \begin{tabular}[c]{@{}c@{}}RVFL-$\ell_{1}/\ell_{\infty}$ \\ ADMM\end{tabular} & \begin{tabular}[c]{@{}c@{}}RVFL-$\ell_{1}/\ell_{\infty}$ \\ FISTA\end{tabular} & RVFL-$\ell_{p}$         & RVFL-$\ell_{1}/\ell_{2}$            & RVFL-$\ell_{1/2}/\ell_{2}$              \\ 
			\hline
			\begin{tabular}[c]{@{}c@{}}HappinessRank\_2015\\(158,10)\end{tabular}           & 1.2060E-04 & 3.1146E-05 & 1.2008E-05 & 6.8062E-05 & 1.2278E-05 & 1.1598E-05         & 1.2007E-05   & 1.2008E-05    & 1.2007E-05 & 1.2007E-05       & \textbf{1.1388E-05}  \\
			\begin{tabular}[c]{@{}c@{}}fri\_c3\_100\_25\\(100,25)\end{tabular}              & 0.1671~    & 0.1903~    & 0.1341~    & 0.2573~    & 0.1367~    & 0.1342~            & 0.1341~      & 0.1341~       & 0.1341~    & 0.1341~          & \textbf{0.1339~}     \\
			\begin{tabular}[c]{@{}c@{}}cpu\\(209,7)\end{tabular}                            & 0.0064~    & 0.0084~    & 0.0019~    & 0.0016~    & 0.0018~    & 0.0019~            & 0.0019~      & 0.0019~       & 0.0019~    & 0.0019~          & \textbf{0.0014~}     \\
			\begin{tabular}[c]{@{}c@{}}bladder0\\(397,4)\end{tabular}                       & 4.3401~    & 3.9167~    & 5.3879~    & 95.1829~   & 4.0943~    & 5.1280~            & 5.3879~      & 5.3879~       & 5.3881~    & 5.3879~          & \textbf{2.0788~}     \\
			\begin{tabular}[c]{@{}c@{}}lung\\(167,9)\end{tabular}                           & 0.8747~    & 1.0447~    & 0.7428~    & 0.8540~    & 0.7575~    & 0.8515~            & 0.7428~      & 0.7428~       & 0.7428~    & 0.7428~          & \textbf{0.7417~}     \\
			\begin{tabular}[c]{@{}c@{}}csb\_ch4a\\(51,2)\end{tabular}                       & 0.1140~    & 0.1990~    & 0.0544~    & 0.7642~    & 0.0539~    & 0.0542~            & 0.0544~      & 0.0544~       & 0.0544~    & 0.0544~          & \textbf{0.0524~}     \\
			\begin{tabular}[c]{@{}c@{}}hipel\_mcleod\_askewaskew21\\(98,5)\end{tabular}     & 1.5026~    & 0.8092~    & 0.6919~    & 1.9863~    & 1.6558~    & \textbf{0.5955~}   & 0.6919~      & 0.6919~       & 0.6967~    & 0.6919~          & 0.6844~              \\
			\begin{tabular}[c]{@{}c@{}}hipel\_mcleod\_monthlywoods1\\(50,11)\end{tabular}   & 0.0028~    & 0.0034~    & 0.0019~    & 0.0027~    & 0.0019~    & 0.0019~            & 0.0019~      & 0.0019~       & 0.0019~    & 0.0019~          & \textbf{0.0019~}     \\
			\begin{tabular}[c]{@{}c@{}}Vinnie\\(380,2)\end{tabular}                         & 0.0421~    & 0.0421~    & 0.0421~    & 0.0421~    & 0.0421~    & 0.0421~            & 0.0421~      & 0.0421~       & 0.0421~    & 0.0421~          & \textbf{0.0421~}     \\
			\begin{tabular}[c]{@{}c@{}}alr161b\\(62,1)\end{tabular}                         & 0.0525~    & 0.0519~    & 0.0497~    & 0.0498~    & 0.0497~    & 0.0497~            & 0.0497~      & 0.0497~       & 0.0497~    & \textbf{0.0497~} & 0.0498~              \\
			\begin{tabular}[c]{@{}c@{}}hipel\_mcleod\_miscsimar41\\(200,3)\end{tabular}     & 0.0101~    & 0.0091~    & 0.0086~    & 0.0110~    & 0.0088~    & 0.0085~            & 0.0086~      & 0.0086~       & 0.0086~    & 0.0086~          & \textbf{0.0085~}     \\
			\begin{tabular}[c]{@{}c@{}}hipel\_mcleod\_noakesmckenzie1\\(100,5)\end{tabular} & 0.0863~    & 0.0795~    & 0.0670~    & 0.1692~    & 0.0676~    & 0.1045~            & 0.0670~      & 0.0670~       & 0.0670~    & 0.0670~          & \textbf{0.0669~}     \\
			\bottomrule
	\end{tabular}}
\end{table}

\section{Conclusions}\label{sec:conclusion}

In this work, we propose a novel nonconvex, nonsmooth, scale-invariant $\ell_{1/2}/\ell_{2}$ sparse optimization problem. Based on the eNSP, we first establish sufficient conditions that guarantee exact and stable sparse recovery for the associated constrained minimization problem. Then we design an appropriate splitting scheme and apply it into ADMM algorithm to solve the unconstrained $\ell_{1/2}/\ell_{2}$ minimization problem. Under reasonable assumptions and mild conditions, we prove the global convergence of the proposed ADMM algorithm for such a nonconvex, nonsmooth, and non-Lipschitz problem. Moreover, by leveraging the K\L{} property and K\L{} inequality, we analyze its linear convergence rate in specific cases. Finally,  numerical experiments demonstrate that $\ell_{1/2}/\ell_{2}$ consistently outperforms state-of-the-art approaches in both noiseless and noisy scenarios, with particularly superior performance on oversampled DCT matrices. Especially, the experiment on neural network sparsity and generalization further confirm the effectiveness of $\ell_{1/2}/\ell_{2}$ in  its predict accuracy. Future research will focus on a deeper investigation of the convergence behavior of $\text{ADMM}_{inner}$ and on the development of more effective optimization schemes to improve both computational efficiency and recovery performance.

\bibliographystyle{siamplain}
\bibliography{references}
\end{document}